\documentclass[preprint,1pt]{elsarticle}
\usepackage{url}
\usepackage{soul}
\usepackage{xcolor}
\usepackage{amssymb}
\usepackage{amsmath}
\usepackage{stmaryrd}
\usepackage{siunitx}
\usepackage{enumitem}
\usepackage{commath}
\usepackage{amsfonts}
\usepackage{mathtools}
\usepackage{amsopn}
\usepackage{amsthm}
\usepackage{graphicx,subfig}
\usepackage{hyperref}
\hypersetup{
	colorlinks   = true, 
	urlcolor     = blue, 
	linkcolor    = blue, 
	citecolor   = red 
}
\usepackage[noabbrev]{cleveref}

\numberwithin{equation}{section}
\newtheorem{theorem}{Theorem}[section]
\newtheorem{lemma}[theorem]{Lemma}

\newtheorem{corollary}[theorem]{Corollary}
\theoremstyle{definition}

\theoremstyle{remark}

\makeatletter
\newcommand{\tnorm}{\@ifstar\@tnorms\@tnorm}
\newcommand{\@tnorms}[1]{%
  \left|\mkern-1.5mu\left|\mkern-1.5mu\left|
   #1
  \right|\mkern-1.5mu\right|\mkern-1.5mu\right|
}
\newcommand{\@tnorm}[2][]{%
  \mathopen{#1|\mkern-1.5mu#1|\mkern-1.5mu#1|}
  #2
  \mathclose{#1|\mkern-1.5mu#1|\mkern-1.5mu#1|}
}
\newcommand{\vertiii}[2][1]{\abs[#1]{\kern0.15ex\norm[#1]{#2}\kern-0.25ex}}

\makeatother

\newcommand{\bp}{\boldsymbol{p}}
\newcommand{\bph}{\boldsymbol{p}_h}

\newcommand{\bq}{\boldsymbol{q}}
\newcommand{\bqh}{\boldsymbol{q}_h}

\newcommand{\br}{\boldsymbol{r}}
\newcommand{\brh}{\boldsymbol{r}_h}

\newcommand{\bs}{\boldsymbol{s}}
\newcommand{\bsh}{\boldsymbol{s}_h}

\newcommand{\sphi}{\phi}
\newcommand{\sphih}{\phi_h}
\newcommand{\smu}{\mu}
\newcommand{\smuh}{\mu_h}
\newcommand{\spsi}{\psi}
\newcommand{\spsih}{\psi_h}

\newcommand{\fphi}{\bar{\phi}}
\newcommand{\fphih}{\bar{\phi}_h}
\newcommand{\fmu}{\bar{\mu}}
\newcommand{\fmuh}{\bar{\mu}_h}

\newcommand{\fpsi}{\bar{\psi}}
\newcommand{\fpsih}{\bar{\psi}_h}

\newcommand{\testp}{\tilde{\boldsymbol{p}}}
\newcommand{\testq}{\tilde{\boldsymbol{q}}}
\newcommand{\testr}{\tilde{\boldsymbol{r}}}
\newcommand{\tests}{\tilde{\boldsymbol{s}}}

\newcommand{\testphi}{\tilde{\phi}}
\newcommand{\testmu}{\tilde{\mu}}
\newcommand{\testpsi}{\tilde{\psi}}

\newcommand{\testfphi}{\theta_\phi}
\newcommand{\testfmu}{\theta_\mu}
\newcommand{\testfpsi}{\theta_\psi}

\newcommand{\elementprod}[3]{\del[#1]{#2 , #3 }_{\mathcal{T}_h}}
\newcommand{\faceprod}[2]{\langle #1 , #2 \rangle_{\partial \mathcal{T}_h}}

\usepackage{xspace,color}

\begin{document}

\begin{frontmatter}
\title{An unconditionally stable hybridizable-embedded discontinuous Galerkin method for the phase field crystal equation}
\author[1]{Giselle Saylor\corref{cor1}}
\ead{gsaylor@oakland.edu}

\author[1]{Tam\'{a}s L. Horv\'{a}th}
\ead{thorvath@oakland.edu}

\author[2]{Natasha S. Sharma\fnref{fn1}}
\ead{nssharma@utep.edu}

\cortext[cor1]{Corresponding author}
\fntext[fn1]{The third author was partially supported by the National Science Foundation under grant NSF DMS-2110774.}

\affiliation[1]{organization={Department of Mathematics and Statistics, Oakland University},
	addressline={146 Library Drive},
	postcode={48309},
	city={Rochester, MI},
	country={USA}}
\affiliation[2]{organization={Department of Mathematical Sciences, The University of Texas at El Paso},
	addressline={500 W University Ave},
	postcode={79968-0514},
	city={El Paso, TX},
	country={USA}}

\begin{abstract}
	This paper presents a first-order convex splitting hybridizable/embedded discontinuous Galerkin method for the phase field crystal equation written in mixed form. Since the sixth-order phase field crystal equation is rewritten as a first-order system, our scheme avoids the calculation of high-order derivatives, which can be computationally expensive. The proposed method uses continuous facet unknowns and static condensation, which significantly reduces the number of coupled degrees of freedom. Using stabilization parameters that satisfy a simple and explicit relation, we show that our scheme is unconditionally energy stable. Moreover, we show the existence and uniqueness of the discrete solution for the case of variable mobility. The scheme's performance and properties are demonstrated through several numerical examples, including benchmark results that align with the existing literature, as well as a comparison of degrees of freedom against other methods.
\end{abstract}

\begin{keyword}
	Hybridized/Embedded discontinuous Galerkin \sep finite element methods \sep phase field crystal equation
	\MSC 65M60 \sep 65N30 \sep 35R99 \sep 35Q92 \sep 35Q35
\end{keyword}

\end{frontmatter}

\section{Introduction}
The phase field crystal (PFC) equation is a continuum model that effectively captures the microstructure evolution of two-phase systems on atomic length and diffusive time scales. This model bridges the gap between atomistic simulations which resolve atomic-level details but are computationally demanding and classical phase field models which lack the atomic-level resolution. Given the model's ability to capture key features in the crystal growth process, it finds applications in several interesting phenomena such as grain growth, liquid phase epitaxial growth, spinodal decomposition, and crack propagation~\cite{EG:04:PFCplastic}. The key features captured by this two-phase system can be described by introducing an atomically varying phase field that is relatively uniform in space except near interfaces, where a rapid change in the phase field occurs~\cite{EG:04:PFCplastic}. Depending on the phenomenon, the phase field could represent concentration or density.
Mathematically, the PFC equation is a sixth-order, nonlinear parabolic partial differential equation in the unknown phase field $\phi$ assumed to be conserved. For a convex, polygonal (polyhedral) domain $\Omega \subset \mathbb{R}^d, \ d = 2,3$ with Lipschitz boundary $\partial \Omega$, the dynamics of a conserved field $\phi$ is given by~\cite{EKHG:02:crystal}
\begin{subequations}
	\begin{align}
		\partial_t \phi &= \nabla \cdot (M(\phi) \nabla \mu),\label{eq:model_pde_a}
		\\
		\mu &= f(\phi) + 2\Delta \phi + \Delta^2 \phi
		\quad \text{ on } \Omega \times (0,T), \label{eq:model_pde_b}
	\end{align}
	\label{eq:model_pde}
\end{subequations}
where the scalar function $M(\phi) \geq 0$ is the mobility, $f(\phi)=\phi^3 + (1-\varepsilon)\phi$, and $\varepsilon < 1$ is the undercooling parameter. The above equation is subject to either periodic or the following homogeneous Neumann boundary conditions
\begin{align}
	\nabla \phi \cdot \boldsymbol{n} = \nabla \Delta \phi \cdot \boldsymbol{n} =  \nabla \mu \cdot \boldsymbol{n} = 0 \quad \text{ on } \partial \Omega \times (0,T).  \label{eq:neumann-bc}
\end{align}

The origins of this model can be traced back to the pioneering works by Elder and co-authors in~\cite{EG:04:PFCplastic,EKHG:02:crystal} where Fourier transformations were used to approximate the solution. Other early numerical approaches can be attributed to the works of Cheng and Warren in~\cite{cheng2008efficient} and Mellenthin et al. in~\cite{mellenthin2008phase}, where a spectral approach was adopted. Further numerical methods include an unconditionally stable finite difference framework developed by Wise and co-authors~\cite{DFWWZ:18:PFCconv, HWWL:09:PFC, WWL:09:PFC}. Finite element methods were proposed in~\cite{BRV:07:PFC, Pei2019}, while an isogeometric analysis framework was used in~\cite{GN:2012:PFC, vignal2015energy}. Another work relying on the mixed system of three second-order equations is~\cite{yang-han2017}, although the spatial discretization uses the Fourier-spectral method.

A $C^0$ interior penalty discontinuous Galerkin (IPDG) discretization was introduced in \cite{DS:23:C0PFC}. The sixth-order equation was discretized as a system of two equations, as given in \cref{eq:model_pde}, and an IPDG discretization was used for the fourth-order equation,  \cref{eq:model_pde_b}. The interior penalty nature of the discretization required a penalty parameter, which must be sufficiently large to ensure coercivity; however, no analytical bound is available. Furthermore, the discretization involved calculating the Hessian of the basis functions which is computationally expensive \cite{DS:23:C0PFC}.

Due to the challenge of numerically approximating higher-order spatial derivatives inherent in the model, an alternative approach involves writing the model in its mixed form as a system of first-order equations. In \cite{guo2016ldg}, a local discontinuous Galerkin (LDG) method is used for the discretization in combination with a semi-implicit time scheme. The paper showed unconditional energy stability; however, there is no proof of well-posedness. In the LDG discretization, the finite element space consists of piecewise polynomials that can be discontinuous across mesh elements, and the unknowns are connected via numerical fluxes defined over the interfaces. The sixth-order equation is rewritten into eight equations using four vector-valued unknowns and four scalar-valued unknowns. It is important to note that some of the equations are purely algebraic, but they further increase the number of coupled degrees of freedom, which is already high due to the discontinuous nature of the basis functions.

A possible way to reduce the number of coupled degrees of freedom is to use a hybridizable discontinuous Galerkin (HDG) method. These methods introduce additional unknowns on the skeleton (edges or faces) of the mesh and then define the numerical fluxes in a way that there is no coupling between neighboring elements, but rather between element and skeleton unknowns \cite{Cockburn:2008,Cockburn:2009,Nguyen:2009}. This allows for static condensation: all the element unknowns (vector- and scalar-valued) can be eliminated from the linear system, and a reduced system can be solved only for the skeleton unknowns. Then, the element unknowns can be recovered in a simple post-processing step, element-wise. The HDG methods have been applied to many problems written as first-order systems, such as for the Navier--Stokes equations in  \cite{cesmelioglu2017analysis}, linear and nonlinear elasticity \cite{kabaria2015hybridizable,soon2009hybridizable}, the Cahn--Hilliard equations in \cite{chen:23:HDG-CH,Kirk:2024}, among others. While DG methods are criticized for having a large number of degrees of freedom, HDG can address this issue; however, for some problems, this advantage is only achieved for high polynomial degrees. To further reduce the number of degrees of freedom, it is possible to use continuous facet functions, which will lead to the so-called embedded discontinuous Galerkin (EDG) methods. The EDG methods reduce the number of globally coupled degrees of freedom to that of a continuous Galerkin problem, independently of the polynomial degree, while still preserving many advantageous properties of the DG methods. The EDG methods have been applied to second-order elliptic problems \cite{Cockburn:2009_EDG}, for a wide variety of computational fluid dynamics problems in \cite{Nguyen:2015}, for incompressible fluid problems, \cite{Cesmelioglu:2020,Rhebergen:2020}, and magnetohydrodynamics \cite{Chen:2024}. 

In this work, we introduce an HDG/EDG method for the discretization of the PFC equation. Using four vector-valued and three scalar unknowns, we rewrite the sixth-order equation into a first-order system, thus avoiding the calculation of Hessians. Similar to \cite{guo2016ldg}, one of the vector-valued unknowns is purely algebraic. Furthermore, we will introduce three skeleton variables and reduce the globally coupled degrees of freedom to those. These skeleton variables can either be discontinuous, which is the HDG discretization, or continuous, leading to the EDG discretization. The proposed method involves stabilization parameters, but we will show that if those parameters satisfy a certain relation, then the semi-discrete and fully discrete HDG/EDG schemes for the PFC \cref{eq:model_pde} are unconditionally energy stable. The relation between the stabilization parameters is explicit, and in contrast to the penalty parameter in the $C^0$ IPDG discretization which oftentimes needs to be very large, the stabilization parameters in our method need only to be positive. We compare the number of globally coupled degrees of freedom of the HDG/EDG discretization proposed in this work, against the LDG and $C^0$ IPDG methods existing in the literature. The EDG method reduces the number of degrees of freedom to approximately 12\% of those corresponding to the LDG method. Moreover, we will show well-posedness of the fully discrete problem with variable mobility by recasting it as a fixed-point problem and using Brouwer's fixed-point theorem.

This paper is organized as follows. In Section~\ref{sec:lhdg}, we introduce the HDG/EDG method and establish the energy stability for the semi-discrete scheme and the fully discrete scheme in Section~\ref {sec:energy_stability}. In Section~\ref{sec:existence}, we prove the existence and uniqueness of the discrete solution. We demonstrate the performance of our method with both periodic and Neumann boundary conditions in Section~\ref{sec:numerical_results} through several numerical examples, and close the paper with some concluding remarks in Section~\ref{sec:conclusions}.
\section{The hybridizable/embedded discontinuous Galerkin discretization}
\label{sec:lhdg}

First, we rewrite the problem \cref{eq:model_pde} as a first-order system in the following way
\begin{subequations}
	\begin{align}
		\br &= -\nabla \sphi,
		\\
		\spsi &= -\nabla \cdot \br,
		\\
		\bq &= -\nabla \spsi,
		\\
		\smu &= f(\sphi) + 2\spsi - \nabla \cdot \bq,
		\\
		\bp &= -\nabla \smu,
		\\
		\bs &= M(\sphi)\bp,
		\\
		\partial_t \sphi &= -\nabla \cdot \bs.
		\label{eq:final_eq_new_vars}
	\end{align}
	\label{eq:new_variables}
\end{subequations}
Let $\mathcal{T}_h$ denote a conforming triangulation of $\Omega$ consisting of either simplicial or hexahedral elements with diameter $h_K$. The mesh size is defined by $h = \max_{K\in \mathcal{T}_h}h_K$. We denote by $\mathcal{E}_h$ the set of edges (or faces in three dimensions) of the elements of $\mathcal{T}_h$. For any integer $k \ge 0$, we let $\mathcal{R}^k(S)$ denote the set of polynomials defined on $S\subset \mathbb{R}^s, \ s=1,\ldots, d$ with degree at most $k$ for simplicial elements, or with degree at most $k$ in each variable for hexahedral elements. We define the following finite element spaces
\begin{align}
	W_h &:= \cbr[1]{w_h \in L^2(\mathcal{T}_h) : w_h|_K \in \mathcal{R}^k(K), \forall K \in \mathcal{T}_h},
	\\
	\boldsymbol{V}_h &:= \cbr[1]{\boldsymbol{v}_h \in \sbr[1]{L^2(\mathcal{T}_h)}^d : \boldsymbol{v}_h|_K \in \sbr[1]{\mathcal{R}^k(K)}^d, \forall K \in \mathcal{T}_h},
	\\
	M_h &:= \cbr[1]{\bar{w}_h \in L^2(\mathcal{E}_h) : \bar{w}_h|_K \in \mathcal{R}^k(e), \forall e \in \mathcal{E}_h} \cap C(\mathcal{E}_h)\label{eq:M_space}.
\end{align}
The above definition $M_h$ uses facet spaces that are continuous, leading to an Embedded DG (EDG) discretization. One could consider the space
$$
M_h^{*} := \cbr[1]{\bar{w}_h \in L^2(\mathcal{E}_h) : \bar{w}_h|_K \in \mathcal{R}^k(e), \forall e \in \mathcal{E}_h},
$$
which would use discontinuous facet functions, and would lead to a Hybridized DG (HDG) discretization. All theorems hereafter are valid for both spaces, $M_h$ and $M_h^{*}$.

For simplicity, we denote
\begin{equation}
	\del[1]{f,g}_{\mathcal{T}_h} = \sum_{K \in \mathcal{T}_h} \int_K fg \dif x, \quad \langle f,g\rangle_{\partial \mathcal{T}_h} = \sum_{K \in \mathcal{T}_h} \int_{\partial K} fg \dif s.
\end{equation}
Moreover, we define \begin{equation}
	\norm[1]{f}_{\mathcal{T}_h}^2 = \sum_{K \in \mathcal{T}_h} \int_K f^2 \dif x, \quad
	\norm[1]{f}_{\partial\mathcal{T}_h}^2 = \sum_{K \in \mathcal{T}_h} \int_{\partial K} f^2 \dif s.
\end{equation}

The semi-discrete form in space is obtained in the usual way by testing with appropriate functions in the corresponding discrete spaces, applying integration by parts, and introducing numerical fluxes. 
For almost each $t$ in $[0,T]$,
the semi-discrete problem is: find $(\brh,\bqh,\bph,\bsh,\sphih,\spsih,\smuh,\fpsih,\fmuh,\fphih) \in \boldsymbol{V}_h^4 \times W_h^3 \times M_h^3$ such that the following equations are satisfied for all $(\testr,\testq,\testp,\tests,\testphi,\testpsi,\testmu,\testfpsi,\testfmu,\testfphi)\in \boldsymbol{V}_h^4 \times W_h^3 \times M_h^3$
\begin{subequations}
	\begin{align}
		\elementprod{1}{\brh}{\testr} &= \elementprod{1}{\sphih} {\nabla \cdot \testr} - \faceprod{\fphih}{\testr \cdot \boldsymbol{n}},
		\label{eq:r_semidisc}
		\\
		\elementprod{1}{\spsih}{\testphi} &= -\elementprod{1}{\nabla \cdot \brh}{\testphi} - \faceprod{\widehat{\brh\cdot \boldsymbol{n}} - \brh \cdot \boldsymbol{n}}{\testphi},
		\label{eq:psi_semidisc}
		\\
		\elementprod{1}{\bqh}{\testq} &= \elementprod{1}{\spsih}{\nabla\cdot \testq} - \faceprod{\fpsih}{\testq \cdot \boldsymbol{n}},
		\label{eq:q_semidisc}
		\\
		\elementprod{1}{\smuh}{\testpsi} &= \elementprod{1}{f(\sphih)}{\testpsi} + 2\elementprod{1}{\spsih}{\testpsi} - \elementprod{1}{\nabla \cdot \bqh}{\testpsi}
		\notag
		\\
		&\quad - \faceprod{\widehat{\bqh\cdot \boldsymbol{n}} - \bqh \cdot \boldsymbol{n}}{\testpsi},
		\label{eq:mu_semidisc}
		\\
		\elementprod{1}{\bph}{\testp} &= \elementprod{1}{\smuh}{\nabla \cdot \testp}- \faceprod{\fmuh}{\testp \cdot \boldsymbol{n}},
		\label{eq:p_semidisc}
		\\
		\elementprod{1}{\bsh}{\tests} &= \elementprod{1}{M(\sphih)\bph}{\tests},
		\label{eq:s_semidisc}
		\\
		\elementprod{1}{\partial_t \sphih}{\testmu} &= -\elementprod{1}{\nabla \cdot \bsh}{\testmu} - \faceprod{\widehat{\bsh\cdot \boldsymbol{n}} - \bsh \cdot \boldsymbol{n}}{\testmu},
		\label{eq:phi_semidisc}
		\\
		\faceprod{\widehat{\brh \cdot \boldsymbol{n}}}{\testfphi} &= 0,
		\label{eq:rhat_semidisc}
		\\
		\faceprod{\widehat{\bqh \cdot \boldsymbol{n}}}{\testfpsi} &= 0,
		\label{eq:qhat_semidisc}
		\\
		\faceprod{\widehat{\bsh \cdot \boldsymbol{n}}}{\testfmu} &= 0.
		\label{eq:shat_semidisc}
	\end{align}
	\label{eq:semidiscrete_form}
\end{subequations}
The numerical fluxes $\widehat{\brh\cdot \boldsymbol{n}},\,\widehat{\bqh\cdot \boldsymbol{n}}$ and $\widehat{\bsh\cdot \boldsymbol{n}}$ are given by
\begin{subequations}
	\begin{align}
		\widehat{\brh \cdot \boldsymbol{n}} &= \brh\cdot \boldsymbol{n} + \tau_1(\sphih - \fphih),
		\\
		\widehat{\bqh \cdot \boldsymbol{n}} &= \bqh\cdot \boldsymbol{n} + \tau_2(\spsih - \fpsih) - \tau_4(\sphih - \fphih),
		\\
		\widehat{\bsh \cdot \boldsymbol{n}} &= \bsh\cdot \boldsymbol{n} + \tau_3(\smuh - \fmuh).
	\end{align}
	\label{eq:num_fluxes}
\end{subequations}
\Cref{eq:rhat_semidisc,eq:qhat_semidisc,eq:shat_semidisc} impose continuity of the numerical fluxes across faces of the mesh. Moreover, $\tau_1, \tau_2, \tau_3, \tau_4 > 0$ are stabilization parameters, and in \cref{sec:energy_stability} we will show the relation that they need to satisfy so that the scheme is unconditionally energy stable.
\section{Energy stability}
\label{sec:energy_stability}

In this section, we will show that our scheme is unconditionally energy stable. First, we obtain the energy relation that the model problem satisfies. The energy in the system is given by~\cite{HWWL:09:PFC}:
\begin{equation*}
	E = \int_\Omega \del{\frac{1}{4}\phi^4 + \frac{1-\varepsilon}{2}\phi^2 - \abs[0]{\nabla \phi}^2 + \frac{1}{2}(\Delta\phi)^2}\dif x,
\end{equation*}
which, written in terms of the variables defined in \cref{eq:new_variables}, is
\begin{equation}
	E = \int_\Omega \del{\frac{1}{4}\phi^4 + \frac{1-\varepsilon}{2}\phi^2 - \abs[0]{\br}^2 + \frac{1}{2}\spsi^2}\dif x.
	\label{eq:energy}
\end{equation}

At the continuous level, we have the following energy balance equation
\begin{align*}
	\partial_t E &= \int_\Omega \del{\phi^3\partial_t \phi + \del[1]{1-\varepsilon}\phi \partial_t \phi - 2\nabla \phi \cdot \nabla \partial_t \phi + \Delta\phi \Delta \partial_t \phi}\dif x
	\\
	&= \int_\Omega \del{\phi^3\partial_t \phi + \del[1]{1-\varepsilon}\phi \partial_t \phi + 2\Delta \phi \partial_t \phi + \Delta^2\phi \partial_t \phi}\dif x
	\\
	&= \int_\Omega \nabla \cdot \del[1]{M(\phi)\nabla \mu}\mu \dif x
	\\
	&= -\int_\Omega M(\phi)\abs[1]{\nabla \mu}^2 \dif x,
\end{align*}
where the second and the fourth lines are obtained from integration by parts and using the homogeneous Neumann or the periodic boundary conditions. 

Thus, the energy balance equation is
\begin{equation}
	\partial_t E + \norm[1]{M(\phi)^{1/2}\nabla \mu}^2_{L^2(\Omega)} = 0.
\end{equation}

Next, we show that the semi-discrete problem \cref{eq:semidiscrete_form} is energy stable.
\begin{lemma}[Energy stability of the semi-discrete problem]
	If $\tau_1 > 0$, $\tau_2 = \tau_1$, $\tau_3 > 0$ and $\tau_4 = 2\tau_1$ in the definition of the numerical fluxes \cref{eq:num_fluxes}, then the semi-discrete scheme satisfies the following energy balance
	\begin{equation}
		\partial_t E_h + \norm[1]{M(\sphih)^{1/2}\bph}_{\mathcal{T}_h}^2 \leq 0,
	\end{equation}
	where the semi-discrete energy $E_h$ is given by
	\begin{equation}
		E_h = \sum_{K\in \mathcal{T}_h}\int_K \del{\frac{1}{4}\sphih^4 + \frac{1-\varepsilon}{2}\sphih^2 - \abs[0]{\brh}^2 + \frac{1}{2}\spsih^2}\dif x.
	\end{equation}
\end{lemma}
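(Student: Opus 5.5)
The plan is the discrete analogue of the continuous energy computation. I would differentiate $E_h$ in time and then use the discrete equations \cref{eq:semidiscrete_form}, together with suitably chosen test functions, to turn every term into $-\norm{M(\sphih)^{1/2}\bph}^2_{\mathcal{T}_h}$ plus nonpositive stabilization contributions. Differentiating gives
\begin{equation*}
\partial_t E_h = \elementprod{1}{f(\sphih)}{\partial_t\sphih} - 2\elementprod{1}{\brh}{\partial_t\brh} + \elementprod{1}{\spsih}{\partial_t\spsih}.
\end{equation*}
Since all spaces are time-independent, the equations \cref{eq:r_semidisc}, \cref{eq:psi_semidisc} and \cref{eq:rhat_semidisc} may also be differentiated in $t$. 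Time derivatives of discrete functions remain admissible test functions.

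\textbf{Step 1 (dissipation).} Take $\testmu=\smuh$ in \cref{eq:phi_semidisc} and integrate by parts elementwise. Take $\tests=\bsh$ in \cref{eq:s_semidisc} and $\testp=\bsh$ in \cref{eq:p_semidisc}, and $\testfmu=\fmuh$ in \cref{eq:shat_semidisc}. Adding these, the facet terms $\faceprod{\bsh\cdot\boldsymbol n}{\smuh-\fmuh}$ cancel, and I expect
\begin{equation*}
\elementprod{1}{\partial_t\sphih}{\smuh} = -\norm{M(\sphih)^{1/2}\bph}^2_{\mathcal{T}_h} - \tau_3\norm{\smuh-\fmuh}^2_{\partial\mathcal{T}_h}.
\end{equation*}
This step only needs $\tau_3>0$.

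\textbf{Step 2 (expanding $\smuh$).} Test \cref{eq:mu_semidisc} with $\testpsi=\partial_t\sphih$. This gives
\begin{equation*}
\elementprod{1}{\smuh}{\partial_t\sphih} = \elementprod{1}{f(\sphih)}{\partial_t\sphih} + 2\elementprod{1}{\spsih}{\partial_t\sphih} + T_q,
\end{equation*}
where
\begin{equation*}
T_q = -\elementprod{1}{\nabla\cdot\bqh}{\partial_t\sphih} - \faceprod{\tau_2(\spsih-\fpsih)-\tau_4(\sphih-\fphih)}{\partial_t\sphih}.
\end{equation*}
It then remains to show two things. First, $2\elementprod{1}{\spsih}{\partial_t\sphih}$ equals $-2\elementprod{1}{\brh}{\partial_t\brh}$ plus facet terms. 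Second, $T_q$ equals $\elementprod{1}{\spsih}{\partial_t\spsih}$ plus facet terms.

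For the first identity, test \cref{eq:psi_semidisc} with $\testphi=\partial_t\sphih$. Then use the time-differentiated \cref{eq:r_semidisc} with $\testr=\brh$, and \cref{eq:rhat_semidisc} with $\testfphi=\partial_t\fphih$. Integrating by parts, I expect
\begin{equation*}
\elementprod{1}{\spsih}{\partial_t\sphih} = -\elementprod{1}{\brh}{\partial_t\brh} - \tau_1\faceprod{\sphih-\fphih}{\partial_t(\sphih-\fphih)}.
\end{equation*}

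For the second identity, test \cref{eq:q_semidisc} with $\testq=\partial_t\brh$ and \cref{eq:qhat_semidisc} with $\testfpsi=\partial_t\fphih$. Also use the time-differentiated \cref{eq:psi_semidisc} with $\testphi=\spsih$, the time-differentiated \cref{eq:rhat_semidisc} with $\testfphi=\fpsih$, and the time-differentiated \cref{eq:r_semidisc} with $\testr=\bqh$. These relate $-\elementprod{1}{\nabla\cdot\bqh}{\partial_t\sphih}$ and $\elementprod{1}{\spsih}{\partial_t\spsih}$. The two quantities should differ only by facet terms of the form $\faceprod{\spsih-\fpsih}{\partial_t(\sphih-\fphih)}$ and $\faceprod{\sphih-\fphih}{\partial_t(\sphih-\fphih)}$, with coefficients built from $\tau_1,\tau_2,\tau_4$.

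\textbf{Step 3 (main obstacle: the facet bookkeeping).} The hard part is collecting all the facet terms. The mixed terms $\faceprod{\spsih-\fpsih}{\partial_t(\sphih-\fphih)}$ come in with coefficients $\tau_1$ (from the time-differentiated $\widehat{\brh\cdot\boldsymbol n}$) and $-\tau_2$ (from $\widehat{\bqh\cdot\boldsymbol n}$), so they cancel exactly when $\tau_2=\tau_1$. The terms $\faceprod{\sphih-\fphih}{\partial_t(\sphih-\fphih)}$ come from two sources: the factor $2$ in the Step 2 identity, giving $-2\tau_1$, and the $\tau_4$ part of $T_q$, giving $+\tau_4$. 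These cancel exactly when $\tau_4=2\tau_1$.

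To carry this out I would write every term over $\partial\mathcal{T}_h$ and use single-valuedness of $\fphih,\fpsih$ through \cref{eq:rhat_semidisc,eq:qhat_semidisc} with the facet test functions above. I would track each coefficient explicitly to confirm these cancellations. If a residual term of the form $\pm\tau_1\,\partial_t\norm{\sphih-\fphih}^2_{\partial\mathcal{T}_h}$ survives, I would re-examine the choice of test functions.

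Combining Steps 1–3 should give $\partial_t E_h + \norm{M(\sphih)^{1/2}\bph}^2_{\mathcal{T}_h} = -\tau_3\norm{\smuh-\fmuh}^2_{\partial\mathcal{T}_h} \le 0$.
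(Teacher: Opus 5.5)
Your proposal is correct and is essentially the paper's proof. The paper uses the same test functions, up to signs and factors of $2$, and adds all the equations at once rather than grouping them into your sub-identities. The leftover facet terms are exactly $(\tau_1-\tau_2)\faceprod{\partial_t(\sphih-\fphih)}{\spsih-\fpsih}$ and $(\tau_4-2\tau_1)\faceprod{\sphih-\fphih}{\partial_t(\sphih-\fphih)}$. So no residual survives, and you get $\partial_t E_h+\norm[1]{M(\sphih)^{1/2}\bph}^2_{\mathcal{T}_h}=-\tau_3\norm[1]{\smuh-\fmuh}^2_{\partial\mathcal{T}_h}$.

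There is one slip in Step 1: the test function in \cref{eq:s_semidisc} must be $\tests=\bph$, not $\bsh$. Only $\elementprod{1}{\bsh}{\bph}=\elementprod{1}{M(\sphih)\bph}{\bph}$ produces the dissipation term. With $\tests=\bsh$ you only get $\norm[1]{\bsh}^2_{\mathcal{T}_h}=\elementprod{1}{M(\sphih)\bph}{\bsh}$.
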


\begin{proof}
	Take the time derivative of \cref{eq:r_semidisc}:
	\begin{multline}
		\elementprod{1}{\partial_t \brh}{\testr} + \elementprod{1}{\brh}{\partial_t \testr} = \elementprod{1}{\partial_t \sphih} {\nabla \cdot \testr} + \elementprod{1}{\sphih} {\nabla \cdot (\partial_t\testr)} 
		\\
		- \faceprod{\partial_t \fphih}{\testr \cdot \boldsymbol{n}}- \faceprod{\fphih}{\partial_t\testr \cdot \boldsymbol{n}}.
	\end{multline}
	Since \cref{eq:r_semidisc} holds true for all $\testr \in \boldsymbol{V}_h$, then it also holds true for $\partial_t\testr$. Thus, the previous equation becomes
	\begin{equation}
		\elementprod{1}{\partial_t \brh}{\testr} = \elementprod{1}{\partial_t \sphih} {\nabla \cdot \testr} 
		- \faceprod{\partial_t \fphih}{\testr \cdot \boldsymbol{n}}.
		\label{eq:aux_dt_r}
	\end{equation}
	
	Let $\testr = -2\brh$ and $\testr = -\bqh$ separately in \cref{eq:aux_dt_r}, to obtain
	\begin{subequations}
		\begin{align}
			-2\elementprod{1}{\partial_t \brh}{\brh} &= -2\elementprod{1}{\partial_t \sphih} {\nabla \cdot \brh} 
			+2 \faceprod{\partial_t \fphih}{\brh \cdot \boldsymbol{n}},
			\label{eq:aux1_r}
			\\
			-\elementprod{1}{\partial_t \brh}{\bqh} &= -\elementprod{1}{\partial_t \sphih} {\nabla \cdot \bqh} 
			+ \faceprod{\partial_t \fphih}{\bqh \cdot \boldsymbol{n}}.
			\label{eq:aux2_r}
		\end{align}    
	\end{subequations}
	Similarly, we take the time derivative of \cref{eq:psi_semidisc}, and take $\testphi = \spsih$, to get
	\begin{equation}
		\elementprod{1}{\partial_t \spsih}{\spsih} = -\elementprod{1}{\nabla \cdot \partial_t \brh}{\spsih} - \faceprod{\widehat{\partial_t \brh\cdot \boldsymbol{n}} - \partial_t \brh \cdot \boldsymbol{n}}{\spsih}.
		\label{eq:aux_psi}
	\end{equation}
	Letting $\testphi = -2\partial_t\sphih$, $\testq = \partial_t\brh$, $\testpsi = -\partial_t\sphih$, $\testp = \bsh$, $\tests = -\bph$, $\testmu = \smuh$ in \cref{eq:psi_semidisc} - \cref{eq:phi_semidisc}, adding up together with \cref{eq:aux1_r,eq:aux2_r,eq:aux_psi}, and simplifying, we obtain
	\begin{align}
		\elementprod{1}{f(\sphih)}{\partial_t\sphih} -2\elementprod{1}{\partial_t \brh}{\brh} + \elementprod{1}{\partial_t \spsih}{\spsih} + \elementprod{1}{M(\sphih)\bph}{\bph}
		&=
		\notag
		\\
		+2 \faceprod{\widehat{\brh\cdot \boldsymbol{n}} - \brh \cdot \boldsymbol{n}}{\partial_t\sphih} +2 \faceprod{\partial_t \fphih}{\brh \cdot \boldsymbol{n}}
		+ \faceprod{\widehat{\bqh\cdot \boldsymbol{n}} - \bqh \cdot \boldsymbol{n}}{\partial_t\sphih} 
		\notag
		\\
		+ \faceprod{\partial_t \fphih}{\bqh \cdot \boldsymbol{n}}
		- \faceprod{\widehat{\bsh\cdot \boldsymbol{n}} - \bsh \cdot \boldsymbol{n}}{\smuh} - \faceprod{\fmuh}{\bsh \cdot \boldsymbol{n}} 
		\notag
		\\
		- \faceprod{\widehat{\partial_t \brh\cdot \boldsymbol{n}} - \partial_t \brh \cdot \boldsymbol{n}}{\spsih} - \faceprod{\fpsih}{\partial_t\brh \cdot \boldsymbol{n}} .
	\end{align}
	Using the definition of the numerical fluxes \cref{eq:num_fluxes} on the right hand side, we have
	\begin{align}
		\elementprod{1}{f(\sphih)}{\partial_t\sphih} -2\elementprod{1}{\partial_t \brh}{\brh} + \elementprod{1}{\partial_t \spsih}{\spsih} + \elementprod{1}{M(\sphih)\bph}{\bph}
		&=
		\notag
		\\
		+2 \faceprod{\tau_1(\sphih - \fphih)}{\partial_t\sphih} +2 \faceprod{\partial_t \fphih}{\brh \cdot \boldsymbol{n}}
		\notag
		\\
		+ \faceprod{\tau_2(\spsih - \fpsih) - \tau_4(\sphih - \fphih)}{\partial_t\sphih} + \faceprod{\partial_t \fphih}{\bqh \cdot \boldsymbol{n}}
		\notag
		\\
		- \faceprod{\tau_3(\smuh - \fmuh)}{\smuh} - \faceprod{\fmuh}{\bsh \cdot \boldsymbol{n}} 
		\notag
		\\
		- \faceprod{\tau_1(\partial_t \sphih - \partial_t \fphih)}{\spsih} - \faceprod{\fpsih}{\partial_t\brh \cdot \boldsymbol{n}}.
		\label{eq:aux_energy1}
	\end{align}
	
	Next, we work on the equations for the continuity of the fluxes. Take the time derivative of \cref{eq:rhat_semidisc} and let $\testfphi = -\fpsih$, to get
	\begin{equation}
		-\faceprod{\partial_t\brh\cdot\boldsymbol{n} + \tau_1(\partial_t \sphih - \partial_t \fphih)}{\fpsih} = 0.
		\label{eq:aux_rhat}
	\end{equation}
	Taking $\testfmu = -\fmuh$, $\testfphi = 2\partial_t \fphih$, $\testfpsi = \partial_t \fphih$ in \cref{eq:rhat_semidisc} - \cref{eq:shat_semidisc}, add up to \cref{eq:aux_energy1} and \cref{eq:aux_rhat}, and simplify to obtain
	\begin{align}
		\elementprod{1}{f(\sphih)}{\partial_t\sphih} -2\elementprod{1}{\partial_t \brh}{\brh} + \elementprod{1}{\partial_t \spsih}{\spsih} 
		+ \norm[1]{M^{1/2}(\sphih)\bph}^2_{\mathcal{T}_h} &=
		\notag
		\\
		- \tau_3\norm[1]{\smuh - \fmuh}^2_{\partial\mathcal{T}_h}
		+ \faceprod{\tau_2(\spsih - \fpsih) - \tau_4(\sphih - \fphih)}{\partial_t\sphih - \partial_t \fphih} &
		\notag
		\\
		+ 2 \faceprod{\tau_1(\sphih - \fphih)}{\partial_t\sphih - \partial_t\fphih}
		- \faceprod{\tau_1(\partial_t \sphih - \partial_t \fphih)}{\spsih - \fpsih}.
	\end{align}
	If $\tau_4 = 2\tau_1$, and $\tau_2 = \tau_1$, then
	\begin{multline}
		\elementprod{1}{f(\sphih)}{\partial_t\sphih} -2\elementprod{1}{\partial_t \brh}{\brh} + \elementprod{1}{\partial_t \spsih}{\spsih} + \norm[1]{M^{1/2}(\sphih)\bph}^2_{\mathcal{T}_h} =
		\\
		- \tau_3\norm[1]{\smuh - \fmuh}^2_{\partial\mathcal{T}_h}.
	\end{multline}
	
	Note that 
	\begin{align*}
		\elementprod{1}{f(\sphih)}{\partial_t\sphih} &-2\elementprod{1}{\partial_t \brh}{\brh} + \elementprod{1}{\partial_t \spsih}{\spsih} 
		\\
		&= \sum_{K\in \mathcal{T}_h} \int_K \del{\del[1]{\sphih^3 + (1-\varepsilon)\sphih}\partial_t \sphih - 2 \partial_t\brh\cdot \brh + \spsih \partial_t \spsih}\dif x
		\\
		&= \sum_{K\in \mathcal{T}_h} \partial_t\int_K \del{\frac{1}{4}\sphih^4 + \frac{1-\varepsilon}{2}\sphih^2 - \abs[0]{\brh}^2 + \frac{1}{2}\spsih^2}\dif x
		\\
		&= \partial_t E.
	\end{align*}
	The result follows.
\end{proof}

\subsection{First-order convex splitting temporal discretization}
We divide the time domain $(0,T)$ into $N$ subintervals each of uniform length $\Delta t$ and rely on the notation of $\phi^n$ to denote $\phi(t^n)$ i.e., $\phi$ at time $t^n$. Let $\delta^n_{\Delta t}$ denote the discrete time derivative, e.g., $\delta^n_{\Delta t} \sphih := \del[1]{\sphih^n - \sphih^{n-1}}/\Delta t$. We use a first-order convex splitting, which is described next.

The fully discrete problem is: given $\sphih^{n-1}$ and $\spsih^{n-1}$, find $(\brh^n, \bqh^n, \bph^n, \bsh^n, \sphih^n, \spsih^n, \allowbreak \smuh^n, \fpsih^n, \fmuh^n, \fphih^n) \in \boldsymbol{V}_h^4 \times W_h^3 \times M_h^3$ such that the following equations are satisfied for all $(\testr,\testq,\testp,\tests,\testphi,\testpsi,\testmu,\testfpsi,\testfmu,\testfphi)\in \boldsymbol{V}_h^4 \times W_h^3 \times M_h^3$
\begin{subequations}
	\begin{align}
		\elementprod{1}{\brh^n}{\testr} &= \elementprod{1}{\sphih^n} {\nabla \cdot \testr} - \faceprod{\fphih^n}{\testr \cdot \boldsymbol{n}},
		\label{eq:r_disc}
		\\
		\elementprod{1}{\spsih^n}{\testphi} &= -\elementprod{1}{\nabla \cdot \brh^n}{\testphi} - \faceprod{\widehat{\brh^n\cdot \boldsymbol{n}} - \brh^n \cdot \boldsymbol{n}}{\testphi},
		\label{eq:psi_disc}
		\\
		\elementprod{1}{\bqh^n}{\testq} &= \elementprod{1}{\spsih^n}{\nabla\cdot \testq} - \faceprod{\fpsih^n}{\testq \cdot \boldsymbol{n}},
		\label{eq:q_disc}
		\\
		\elementprod{1}{\smuh^n}{\testpsi} &= \elementprod{1}{f(\sphih^n)}{\testpsi} + 2\elementprod{1}{\spsih^{n-1}}{\testpsi} 
		\label{eq:mu_disc}
		\\
		&\quad - \elementprod{1}{\nabla \cdot \bqh^n}{\testpsi} - \faceprod{\widehat{\bqh^n\cdot \boldsymbol{n}} - \bqh^n \cdot \boldsymbol{n}}{\testpsi},
		\nonumber
		\\
		\elementprod{1}{\bph^n}{\testp} &= \elementprod{1}{\smuh^n}{\nabla \cdot \testp}- \faceprod{\fmuh^n}{\testp \cdot \boldsymbol{n}},
		\label{eq:p_disc}
		\\
		\elementprod{1}{\bsh^n}{\tests} &= \elementprod{1}{M(\sphih^{n-1})\bph^n}{\tests},
		\label{eq:s_disc}
		\\
		\elementprod{1}{\delta_{\Delta t}^n \sphih}{\testmu} &= -\elementprod{1}{\nabla \cdot \bsh^n}{\testmu} - \faceprod{\widehat{\bsh^n\cdot \boldsymbol{n}} - \bsh^n \cdot \boldsymbol{n}}{\testmu},
		\label{eq:phi_disc}
		\\
		\faceprod{\widehat{\brh^n \cdot \boldsymbol{n}}}{\testfphi} &= 0,
		\label{eq:rhat_disc}
		\\
		\faceprod{\widehat{\bqh^n \cdot \boldsymbol{n}}}{\testfpsi} &= 0,
		\label{eq:qhat_disc}
		\\
		\faceprod{\widehat{\bsh^n \cdot \boldsymbol{n}}}{\testfmu} &= 0.
		\label{eq:shat_disc}
	\end{align}
	\label{eq:discrete_form}
\end{subequations}

We remark that given the initial condition $\sphih^0$, $\spsih^0$ can be calculated element-wise as $\spsih^0|_K = \Delta \sphih^0|_K$, unless stated otherwise.
In the following lemma, we show that the fully discrete scheme is unconditionally energy stable.
\begin{lemma}[Fully-discrete energy stability]
	\label{lem:full_energy_stability}
	If $\tau_1 > 0$, $\tau_2 = \tau_1$, $\tau_3 > 0$, and $\tau_4 = 2\tau_1$ in the definition of the numerical fluxes \cref{eq:num_fluxes}, then the fully discrete scheme \cref{eq:discrete_form} is energy stable, so that
	\begin{equation}
		E_h^n + \Delta t \norm[1]{M(\sphih^{n-1})^{1/2}\bph^n}^2_{\mathcal{T}_h} \leq E_h^{n-1},
	\end{equation}
	where
	\begin{equation}
		E_h^n = \sum_{K\in \mathcal{T}_h}\int_K\del{\frac{1}{4}\del[0]{\sphih^n}^4 + \frac{1-\varepsilon}{2}\abs[0]{\sphih^n}^2 -\abs[0]{\brh^n}^2 + \frac 12 \abs[0]{\spsih^n}^2}\dif x.
	\end{equation}
\end{lemma}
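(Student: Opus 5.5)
The plan mirrors the semi-discrete proof, with $\partial_t$ replaced by the backward difference $\delta^n_{\Delta t}$. Two new ingredients enter: the convex splitting (explicit $2\spsih^{n-1}$ in \cref{eq:mu_disc}, implicit $f(\sphih^n)$) and the elementary identities
\[
2a(a-b)=a^2-b^2+(a-b)^2,\qquad -2a(a-b)= -(a^2-b^2)-(a-b)^2 .
\]

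First, the time derivative of \cref{eq:r_semidisc} is replaced by the difference of \cref{eq:r_disc} at levels $n$ and $n-1$, divided by $\Delta t$. This gives
\[
\elementprod{1}{\delta^n_{\Delta t}\brh}{\testr}=\elementprod{1}{\delta^n_{\Delta t}\sphih}{\nabla\cdot\testr}-\faceprod{\delta^n_{\Delta t}\fphih}{\testr\cdot\boldsymbol{n}}.
\]
The same is done for \cref{eq:psi_disc} and for the flux condition \cref{eq:rhat_disc}. This requires these linear equations to hold at level $n-1$, which they do for $n\ge 2$ since they are part of the scheme. For $n=1$, I would assume the initial data are compatible, i.e.\ that $\spsih^0$, $\brh^0$, $\fphih^0$ satisfy \cref{eq:r_disc}, \cref{eq:psi_disc} and \cref{eq:rhat_disc}. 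I expect the paper to adopt this assumption, at least implicitly.

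Next, I would use the same test functions as in the semi-discrete proof, with $\partial_t$ replaced by $\delta^n_{\Delta t}$:
\begin{itemize}
\item $\testr=-2\brh^n$ and $\testr=-\bqh^n$ in the differenced $\br$-equation;
\item $\testphi=\spsih^n$ in the differenced \cref{eq:psi_disc};
\item $\testphi=-2\delta^n_{\Delta t}\sphih$, $\testq=\delta^n_{\Delta t}\brh$, $\testpsi=-\delta^n_{\Delta t}\sphih$, $\testp=\bsh^n$, $\tests=-\bph^n$, $\testmu=\smuh^n$ in \cref{eq:psi_disc}--\cref{eq:phi_disc};
\item $\testfphi=-\fpsih^n$ in the differenced \cref{eq:rhat_disc}, and $\testfmu=-\fmuh^n$, $\testfphi=2\delta^n_{\Delta t}\fphih$, $\testfpsi=\delta^n_{\Delta t}\fphih$ in \cref{eq:rhat_disc}--\cref{eq:shat_disc}.
\end{itemize}
Because every equation used is linear in the unknowns, the face-term bookkeeping is word-for-word that of the semi-discrete proof. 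With $\tau_2=\tau_1$ and $\tau_4=2\tau_1$, all face terms cancel except $-\tau_3\norm{\smuh^n-\fmuh^n}^2_{\partial\mathcal{T}_h}\le0$.

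The one structural difference is the $\spsih$ term in \cref{eq:mu_disc}. There, $2\elementprod{}{\spsih^{n-1}}{\delta^n_{\Delta t}\sphih}$ appears in place of $2\elementprod{}{\spsih^n}{\delta^n_{\Delta t}\sphih}$. In the semi-discrete proof, the $\spsih$-term from \cref{eq:psi_semidisc} tested with $-2\partial_t\sphih$ cancels it. Here it leaves a residual $2\elementprod{}{\spsih^n-\spsih^{n-1}}{\delta^n_{\Delta t}\sphih}$, which I must track. Multiplying the resulting identity by $\Delta t$, I expect it to read
\begin{multline*}
\Delta t\elementprod{1}{f(\sphih^n)}{\delta^n_{\Delta t}\sphih}-2\elementprod{1}{\brh^n-\brh^{n-1}}{\brh^n}+\elementprod{1}{\spsih^n-\spsih^{n-1}}{\spsih^n}\\
+\Delta t\norm{M(\sphih^{n-1})^{1/2}\bph^n}^2_{\mathcal{T}_h}= -\Delta t\tau_3\norm{\smuh^n-\fmuh^n}^2_{\partial\mathcal{T}_h}+R,
\end{multline*}
where $R$ is the splitting residual.

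The main obstacle is showing that the three element terms dominate $E^n_h-E^{n-1}_h$ once $R$ is included.
\begin{itemize}
\item For the $f$ term: $\tfrac14\phi^4+\tfrac{1-\varepsilon}{2}\phi^2$ is convex because $\varepsilon<1$. Convexity gives $(f(a),a-b)\ge F(a)-F(b)$, which handles the nonlinear part.
\item The $\spsih$ term gives $\tfrac12(\norm{\spsih^n}^2-\norm{\spsih^{n-1}}^2)$ plus the nonnegative quantity $\tfrac12\norm{\spsih^n-\spsih^{n-1}}^2$.
\item The $\brh$ term gives $-(\norm{\brh^n}^2-\norm{\brh^{n-1}}^2)-\norm{\brh^n-\brh^{n-1}}^2$, and the last piece has the bad sign.
\end{itemize}
The key step is to show that $R$ combines with $-\norm{\delta\brh}^2$ into something controlled by $\tfrac12\norm{\delta\spsih}^2$, where $\delta\brh=\brh^n-\brh^{n-1}$ and $\delta\spsih=\spsih^n-\spsih^{n-1}$. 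Testing the differenced $\br$-equation with $\delta\brh$ and the differenced $\psi$-equation with $\delta\sphih$, and using the flux condition, I expect $\norm{\delta\brh}^2=\elementprod{}{\delta\spsih}{\delta\sphih}+\tau_1\norm{\delta\sphih-\delta\fphih}^2_{\partial\mathcal{T}_h}$. This mirrors the continuous identity $\norm{\nabla\delta\phi}^2=-(\Delta\delta\phi,\delta\phi)$. The residual $R$ is also of the form $2\elementprod{}{\delta\spsih}{\delta\sphih}$, with the sign arranged so that the combination $-\norm{\delta\brh}^2+R$ collapses to a term like $-\elementprod{}{\delta\spsih}{\delta\sphih}$. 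Young's inequality together with the dissipative face terms should then absorb this by $\tfrac12\norm{\delta\spsih}^2$, or else the terms cancel exactly. This is the concave-part argument of convex splitting: $-\abs{\nabla\phi}^2$ is concave and treated explicitly. Verifying the exact sign and bookkeeping of this absorption is where I expect the most care to be needed. Dropping all nonnegative remainders then yields $E^n_h+\Delta t\norm{M(\sphih^{n-1})^{1/2}\bph^n}^2_{\mathcal{T}_h}\le E^{n-1}_h$.
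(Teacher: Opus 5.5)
\textbf{Gap at the key step.} Your overall plan is viable, and your face-term bookkeeping is right. Testing the $\br$- and $\psi$-equations at level $n$, all $\tau_1,\tau_4$ face terms do cancel when $\tau_2=\tau_1$ and $\tau_4=2\tau_1$. The residual is then $R=2\elementprod{1}{D\spsih}{D\sphih}$, where $D\spsih=\spsih^n-\spsih^{n-1}$, $D\sphih=\sphih^n-\sphih^{n-1}$, and similarly for the other variables.

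The problem is the step you single out as the crux: the discrete Green identity you write has the wrong sign. Since $\br=-\nabla\phi$ and $\psi=-\nabla\cdot\br$, $\spsih$ approximates $+\Delta\phi$. Test the differenced \cref{eq:r_disc} with $D\brh$, the differenced \cref{eq:psi_disc} with $D\sphih$, and the differenced \cref{eq:rhat_disc} with $D\fphih$. This gives
\[
\norm{D\brh}^2_{\mathcal{T}_h}+\tau_1\norm{D\sphih-D\fphih}^2_{\partial\mathcal{T}_h}=-\elementprod{1}{D\spsih}{D\sphih},
\]
which is exactly what your own analogy $\norm{\nabla\delta\phi}^2=-(\Delta\delta\phi,\delta\phi)$ predicts. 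Hence
\[
R=-2\norm{D\brh}^2_{\mathcal{T}_h}-2\tau_1\norm{D\sphih-D\fphih}^2_{\partial\mathcal{T}_h}\le 0 .
\]
Moving $R$ to the left turns the bad $-\norm{D\brh}^2_{\mathcal{T}_h}$ into $+\norm{D\brh}^2_{\mathcal{T}_h}+2\tau_1\norm{D\sphih-D\fphih}^2_{\partial\mathcal{T}_h}$. The cancellation is exact and no Young's inequality is needed.

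With the sign you wrote, $R$ would put $+2\norm{D\brh}^2_{\mathcal{T}_h}$ on the right, leaving $-3\norm{D\brh}^2_{\mathcal{T}_h}$ on the left, and the $\br$-terms would not close. Your Young-type fallback cannot repair this unconditionally. The bound $\abs{\elementprod{1}{D\spsih}{D\sphih}}\le\frac{\eta}{2}\norm{D\spsih}^2+\frac{1}{2\eta}\norm{D\sphih}^2$ needs $\eta\le 1$, since you only have $\frac12\norm{D\spsih}^2$. It also needs $1/\eta\le 1-\varepsilon$, since strong convexity of the $f$-part gives at most $\frac{1-\varepsilon}{2}\norm{D\sphih}^2$. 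Both cannot hold for $0<\varepsilon<1$. As written the key step is unproved; with the corrected identity your argument goes through.

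\textbf{Comparison with the paper.} The paper reaches the same identity with no residual. It tests \cref{eq:r_disc} at level $n-1$ with $-2\delta^n_{\Delta t}\brh$ and \cref{eq:psi_disc} at level $n-1$ with $-2\delta^n_{\Delta t}\sphih$. The explicit $2\spsih^{n-1}$ from \cref{eq:mu_disc} then cancels directly, and the $\br$-term appears as $-2\elementprod{1}{\brh^{n-1}}{\delta^n_{\Delta t}\brh}$. Its square remainder $+\Delta t^{-1}\norm{D\brh}^2_{\mathcal{T}_h}$ has the good sign, because the concave part is evaluated at the old level.

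The price is that the cross terms $2\elementprod{1}{\nabla\cdot\brh^{n-1}}{\delta^n_{\Delta t}\sphih}-2\elementprod{1}{\sphih^{n-1}}{\nabla\cdot\delta^n_{\Delta t}\brh}$ no longer cancel. They must be removed by cross-testing \cref{eq:r_disc} at levels $n$ and $n-1$. Also, the $\tau_1,\tau_4$ face terms leave $2\tau_1\Delta t\norm{\delta^n_{\Delta t}(\sphih-\fphih)}^2_{\partial\mathcal{T}_h}$ rather than cancelling.

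Since $-2\elementprod{1}{\brh^n}{D\brh}=-2\elementprod{1}{\brh^{n-1}}{D\brh}-2\norm{D\brh}^2_{\mathcal{T}_h}$, the two routes are algebraically equivalent. Yours has simpler cross terms; the paper's makes the convex-splitting sign visible without an extra identity. Your convexity argument for the $f$-term is fine. So is your remark that the level-$(n-1)$ equations must hold at $n=1$, which the paper also assumes implicitly.
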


\begin{proof}
	Let $\testq = \delta_{\Delta t}^n \brh$, $\testpsi = -\delta_{\Delta t}^n\sphih$, $\testp = \bsh^n$, $\tests = -\bph^n$, $\testmu = \smuh^n$ in \cref{eq:discrete_form}, and add up, to obtain
	\begin{align}
		\elementprod{1}{f(\sphih^n)}{\delta_{\Delta t}^n\sphih} + \elementprod{1}{\bqh^n}{\delta_{\Delta t}^n \brh} + \elementprod{1}{M(\sphih^{n-1})\bph^n}{\bph^n} 
		= 
		\elementprod{1}{\spsih^n}{\nabla\cdot \delta_{\Delta t}^n \brh} 
		\notag
		\\
		- 2\elementprod{1}{\spsih^{n-1}}{\delta_{\Delta t}^n\sphih} 
		+ \elementprod{1}{\nabla \cdot \bqh^n}{\delta_{\Delta t}^n\sphih} 
		- \faceprod{\fpsih^n}{\delta_{\Delta t}^n \brh \cdot \boldsymbol{n}} 
		\notag
		\\+ \faceprod{\widehat{\bqh^n\cdot \boldsymbol{n}} - \bqh^n \cdot \boldsymbol{n}}{\delta_{\Delta t}^n\sphih} 
		- \faceprod{\fmuh^n}{\bsh^n \cdot \boldsymbol{n}}
		- \faceprod{\widehat{\bsh^n\cdot \boldsymbol{n}} - \bsh^n \cdot \boldsymbol{n}}{\smuh^n}
		\label{eq:aux_energy_discrete_1}
	\end{align}
	Since \cref{eq:discrete_form} is satisfied for all $n$, we have
	\begin{subequations}
		\begin{align}
			\elementprod{1}{\brh^{n-1}}{\testr} &= \elementprod{1}{\sphih^{n-1}} {\nabla \cdot \testr} - \faceprod{\fphih^{n-1}}{\testr \cdot \boldsymbol{n}},
			\label{eq:r_disc_n1}
			\\
			\elementprod{1}{\delta_{\Delta t}^n \brh}{\testr} &= \elementprod{1}{\delta_{\Delta t}^n\sphih} {\nabla \cdot \testr} - \faceprod{\delta_{\Delta t}^n\fphih}{\testr \cdot \boldsymbol{n}},
			\label{eq:deltar_disc_n1}
			\\
			\elementprod{1}{\spsih^{n-1}}{\testphi} &= -\elementprod{1}{\nabla \cdot \brh^{n-1}}{\testphi} - \faceprod{\widehat{\brh^{n-1}\cdot \boldsymbol{n}} - \brh^{n-1} \cdot \boldsymbol{n}}{\testphi},
			\label{eq:psi_disc_n1}
			\\
			\elementprod{1}{\delta_{\Delta t}^n\spsih}{\testphi} &= -\elementprod{1}{\nabla \cdot \delta_{\Delta t}^n\brh}{\testphi} - \faceprod{\widehat{\delta_{\Delta t}^n\brh\cdot \boldsymbol{n}} - \delta_{\Delta t}^n\brh \cdot \boldsymbol{n}}{\testphi},
			\label{eq:deltapsi_disc}
		\end{align}
	\end{subequations}
	Let $\testr = -2\delta_{\Delta t}^n\brh$ in \cref{eq:r_disc_n1}, $\testr = -\bqh^n$ in \cref{eq:deltar_disc_n1}, $\testphi = -2\delta_{\Delta t}^n\sphih$ in \cref{eq:psi_disc_n1} and $\testphi = \spsih^n$ in \cref{eq:deltapsi_disc}, and add up together with \cref{eq:aux_energy_discrete_1}, to obtain
	\begin{align}
		\elementprod{1}{f(\sphih^n)}{\delta_{\Delta t}^n\sphih} -2\elementprod{1}{\brh^{n-1}}{\delta_{\Delta t}^n\brh}  
		+ \elementprod{1}{\delta_{\Delta t}^n\spsih}{\spsih^n} + \elementprod{1}{M(\sphih^{n-1})\bph^n}{\bph^n} 
		&=
		\notag
		\\
		2 \elementprod{1}{\nabla \cdot \brh^{n-1}}{\delta_{\Delta t}^n\sphih}  -2 \elementprod{1}{\sphih^{n-1}} {\nabla \cdot \delta_{\Delta t}^n\brh} + \faceprod{\delta_{\Delta t}^n\fphih}{\bqh^n \cdot \boldsymbol{n}}
		\notag
		\\
		+ 2 \faceprod{\widehat{\brh^{n-1}\cdot \boldsymbol{n}} - \brh^{n-1} \cdot \boldsymbol{n}}{\delta_{\Delta t}^n\sphih} - \faceprod{\widehat{\delta_{\Delta t}^n\brh\cdot \boldsymbol{n}} - \delta_{\Delta t}^n\brh \cdot \boldsymbol{n}}{\spsih^n}
		\notag
		\\
		- \faceprod{\fpsih^n}{\delta_{\Delta t}^n \brh \cdot \boldsymbol{n}} + \faceprod{\widehat{\bqh^n\cdot \boldsymbol{n}} - \bqh^n \cdot \boldsymbol{n}}{\delta_{\Delta t}^n\sphih}  + 2\faceprod{\fphih^{n-1}}{\delta_{\Delta t}^n\brh \cdot \boldsymbol{n}}
		\notag
		\\
		- \faceprod{\fmuh^n}{\bsh^n \cdot \boldsymbol{n}}
		- \faceprod{\widehat{\bsh^n\cdot \boldsymbol{n}} - \bsh^n \cdot \boldsymbol{n}}{\smuh^n}.
		\label{eq:aux_energy_discrete_2}
	\end{align}
	Let us focus on the first two terms on the right hand side of \cref{eq:aux_energy_discrete_2}. We have
	\begin{multline}
		2 \elementprod{1}{\nabla \cdot \brh^{n-1}}{\delta_{\Delta t}^n\sphih}  -2 \elementprod{1}{\sphih^{n-1}} {\nabla \cdot \delta_{\Delta t}^n\brh} = 2 \Delta t^{-1} \elementprod{1}{\nabla \cdot \brh^{n-1}}{\sphih^n} \\
		- 2 \Delta t^{-1} \elementprod{1}{\sphih^{n-1}} {\nabla \cdot \brh^n}. \label{eq:aux_energy_discrete_3}
	\end{multline}
	
	From \cref{eq:r_disc}, taking $\testr = 2\Delta t^{-1}\brh^{n-1}$, we obtain that
	\begin{equation}
		2\Delta t^{-1} \elementprod{1}{\sphih^n} {\nabla \cdot \brh^{n-1}} = 2\Delta t^{-1} \elementprod{1}{\brh^n}{\brh^{n-1}} + 2\Delta t^{-1} \faceprod{\fphih^n}{\brh^{n-1} \cdot \boldsymbol{n}}.
	\end{equation}
	Furthermore, by taking $\testr = 2\Delta t^{-1}\brh^n$ in \cref{eq:r_disc_n1} and using it in the previous equation, we have
	\begin{multline}
		2\Delta t^{-1} \elementprod{1}{\sphih^n} {\nabla \cdot \brh^{n-1}} = 2\Delta t^{-1}\elementprod{1}{\sphih^{n-1}} {\nabla \cdot \brh^n} - 2\Delta t^{-1}\faceprod{\fphih^{n-1}}{\brh^n \cdot \boldsymbol{n}} 
		\\
		+ 2\Delta t^{-1} \faceprod{\fphih^n}{\brh^{n-1} \cdot \boldsymbol{n}}.
	\end{multline}
	Substituting in \cref{eq:aux_energy_discrete_3}, we obtain
	\begin{multline}
		2 \elementprod{1}{\nabla \cdot \brh^{n-1}}{\delta_{\Delta t}^n\sphih}  -2 \elementprod{1}{\sphih^{n-1}} {\nabla \cdot \delta_{\Delta t}^n\brh} = - 2\Delta t^{-1}\faceprod{\fphih^{n-1}}{\brh^n \cdot \boldsymbol{n}} 
		\\
		+ 2\Delta t^{-1} \faceprod{\fphih^n}{\brh^{n-1} \cdot \boldsymbol{n}}.
	\end{multline}
	Using this in \cref{eq:aux_energy_discrete_2}, we have
	\begin{multline}
		\elementprod{1}{f(\sphih^n)}{\delta_{\Delta t}^n\sphih} -2\elementprod{1}{\brh^{n-1}}{\delta_{\Delta t}^n\brh}  
		+ \elementprod{1}{\delta_{\Delta t}^n\spsih}{\spsih^n} + \elementprod{1}{M(\sphih^{n-1})\bph^n}{\bph^n} 
		= 
		\\
		- 2\Delta t^{-1}\faceprod{\fphih^{n-1}}{\brh^n \cdot \boldsymbol{n}} 
		+ 2\Delta t^{-1} \faceprod{\fphih^n}{\brh^{n-1} \cdot \boldsymbol{n}} + \faceprod{\delta_{\Delta t}^n\fphih}{\bqh^n \cdot \boldsymbol{n}}
		\\
		+ 2 \faceprod{\widehat{\brh^{n-1}\cdot \boldsymbol{n}} - \brh^{n-1} \cdot \boldsymbol{n}}{\delta_{\Delta t}^n\sphih} - \faceprod{\widehat{\delta_{\Delta t}^n\brh\cdot \boldsymbol{n}} - \delta_{\Delta t}^n\brh \cdot \boldsymbol{n}}{\spsih^n}
		\\
		- \faceprod{\fpsih^n}{\delta_{\Delta t}^n \brh \cdot \boldsymbol{n}} + \faceprod{\widehat{\bqh^n\cdot \boldsymbol{n}} - \bqh^n \cdot \boldsymbol{n}}{\delta_{\Delta t}^n\sphih}  + 2\faceprod{\fphih^{n-1}}{\delta_{\Delta t}^n\brh \cdot \boldsymbol{n}}
		\\
		- \faceprod{\fmuh^n}{\bsh^n \cdot \boldsymbol{n}}
		- \faceprod{\widehat{\bsh^n\cdot \boldsymbol{n}} - \bsh^n \cdot \boldsymbol{n}}{\smuh^n}  
	\end{multline}
	Substituting the numerical fluxes from \cref{eq:num_fluxes}, and using that $\tau_2 = \tau_1$ and $\tau_4 = 2\tau_1$, we obtain
	\begin{align}
		\elementprod{1}{f(\sphih^n)}{\delta_{\Delta t}^n\sphih} -2\elementprod{1}{\brh^{n-1}}{\delta_{\Delta t}^n\brh}  
		+ \elementprod{1}{\delta_{\Delta t}^n\spsih}{\spsih^n} + \elementprod{1}{M(\sphih^{n-1})\bph^n}{\bph^n} 
		&=
		\notag
		\\
		2 \faceprod{\delta_{\Delta t}^n \fphih}{\brh^{n-1} \cdot \boldsymbol{n}} - \faceprod{\fpsih^n}{\delta_{\Delta t}^n \brh \cdot \boldsymbol{n}}
		+ \faceprod{\delta_{\Delta t}^n\fphih}{\bqh^n \cdot \boldsymbol{n}}
		\notag
		\\
		+ 2 \tau_1 \faceprod{\sphih^{n-1} - \fphih^{n-1}}{\delta_{\Delta t}^n\sphih}  
		-2\tau_1 \faceprod{\sphih^n - \fphih^n}{\delta_{\Delta t}^n\sphih}
		+ \tau_1 \faceprod{\delta_{\Delta t}^n\fphih}{\spsih^n}
		\notag
		\\
		- \tau_1 \faceprod{\fpsih^n}{\delta_{\Delta t}^n\sphih} 
		- \faceprod{\fmuh^n}{\bsh^n \cdot \boldsymbol{n}}
		- \tau_3 \faceprod{\smuh^n - \fmuh^n}{\smuh^n}.
		\label{eq:aux_energy_discrete_4}
	\end{align}
	Let $\testfmu = \fmuh^n$, $\testfpsi = -\delta^n_{\Delta t}\fphi$ in \cref{eq:shat_disc}, and add it to the right hand side of \cref{eq:aux_energy_discrete_4}, to obtain
	\begin{multline}
		\elementprod{1}{f(\sphih^n)}{\delta_{\Delta t}^n\sphih} -2\elementprod{1}{\brh^{n-1}}{\delta_{\Delta t}^n\brh}  
		+ \elementprod{1}{\delta_{\Delta t}^n\spsih}{\spsih^n} + \elementprod{1}{M(\sphih^{n-1})\bph^n}{\bph^n} 
		\\
		+ \tau_3 \faceprod{\smuh^n - \fmuh^n}{\smuh^n - \fmuh^n}
		= 
		2 \faceprod{\delta_{\Delta t}^n \fphih}{\brh^{n-1} \cdot \boldsymbol{n}} - \faceprod{\fpsih^n}{\delta_{\Delta t}^n \brh \cdot \boldsymbol{n}}
		\\
		- \tau_1 \faceprod{\fpsih^n}{\delta_{\Delta t}^n(\sphih - \fphih)} + 2 \tau_1 \faceprod{\sphih^{n-1} - \fphih^{n-1}}{\delta_{\Delta t}^n\sphih}  
		\\
		-2\tau_1 \faceprod{\sphih^n - \fphih^n}{\delta_{\Delta t}^n(\sphih - \fphih)}.
		\label{eq:aux_energy_discrete_5}
	\end{multline}
	Since \cref{eq:rhat_disc} holds for all $n$, we have
	\begin{align}
		\faceprod{\brh^{n-1} \cdot \boldsymbol{n}}{\testfphi} + \tau_1 \faceprod{\sphih^{n-1} - \fphih^{n-1}}{\testfphi} &= 0.
		\label{eq:aux_rhat_n1}
		\\
		\faceprod{\delta_{\Delta t}^n\brh \cdot \boldsymbol{n}}{\testfphi} + \tau_1 \faceprod{\delta_{\Delta t}^n(\sphih - \fphih)}{\testfphi} &= 0
		\label{eq:aux_deltarhat_n1}
	\end{align}
	Take $\testfphi = -2\delta_{\Delta t}^n\fphih$ in \cref{eq:aux_rhat_n1}, and $\testfphi = \fpsih^n$, add up to the right hand side of \cref{eq:aux_energy_discrete_5} and simplify, to get
	\begin{align}
		\elementprod{1}{f(\sphih^n)}{\delta_{\Delta t}^n\sphih} -2\elementprod{1}{\brh^{n-1}}{\delta_{\Delta t}^n\brh}  
		+ \elementprod{1}{\delta_{\Delta t}^n\spsih}{\spsih^n} + \elementprod{1}{M(\sphih^{n-1})\bph^n}{\bph^n} 
		\notag
		\\
		+ \tau_3 \faceprod{\smuh^n - \fmuh^n}{\smuh^n - \fmuh^n}
		+ 2 \tau_1 \Delta t \faceprod{\delta_{\Delta t}^n(\sphih - \fphih)}{\delta_{\Delta t}^n(\sphih - \fphih)}  
		&=0.
		\label{eq:aux_energy_discrete_6}
	\end{align}
	Let us work now on the left hand side of \cref{eq:aux_energy_discrete_2} to see that this is the discrete energy. Using the identities $a(a-b) = \frac 12 (a^2-b^2+(a-b)^2)$ and $b(a-b) = \frac 12 (a^2-b^2-(a-b)^2)$, we have
	\begin{subequations}
		\begin{align}
			\elementprod{1}{\brh^{n-1}}{\delta_{\Delta t}^n\brh} &= \frac 12 \Delta t^{-1}\sum_{K\in \mathcal{T}_h}\int_K\del{\abs[0]{\brh^n}^2 - \abs[0]{\brh^{n-1}}^2 - \abs[0]{\brh^n - \brh^{n-1}}^2}\dif x,
			\label{eq:aux1}
			\\
			\elementprod{1}{\delta_{\Delta t}^n\spsih}{\spsih^n} &= \frac 12 \Delta t^{-1}\sum_{K\in \mathcal{T}_h}\int_K\del{\abs[0]{\spsih^n}^2 - \abs[0]{\spsih^{n-1}}^2 + \abs[0]{\spsih^n - \spsih^{n-1}}^2}\dif x,
			\label{eq:energ_psi_aux}
			\\
			\elementprod{1}{\sphih^n}{\delta_{\Delta t}^n\sphih^n} &= \frac{1}{2} \Delta t^{-1}\sum_{K\in \mathcal{T}_h}\int_K\left(\abs[0]{\sphih^n}^2 - \abs[0]{\sphih^{n-1}}^2+ \abs[0]{\sphih^n - \sphih^{n-1}}^2\right)\dif x.
			\label{eq:aux3}
		\end{align}
	\end{subequations}
	In a similar manner, we can obtain
	\begin{align}
		\elementprod{1}{\del[0]{\sphih^n}^3}{\delta_{\Delta t}^n\sphih}
		= \Delta t^{-1}\sum_{K \in \mathcal{T}_h}\int_K &\left(\frac{1}{4}\del[0]{\sphih^n}^4 - \frac{1}{4}\del[0]{\sphih^{n-1}}^4 + \frac 12 \del[0]{\sphih^n}^2\del[1]{\sphih^n - \sphih^{n-1}}^2\right.
		\notag
		\\
		&\left.+ \frac 14 \del[1]{\sphih^n + \sphih^{n-1}}^2\del[1]{\sphih^n - \sphih^{n-1}}^2\right)\dif x.
		\label{eq:aux2}
	\end{align}
	Using \cref{eq:aux1,eq:energ_psi_aux,eq:aux3} and \cref{eq:aux2} in \cref{eq:aux_energy_discrete_6}
	\begin{multline}
		\Delta t^{-1}\sum_{K\in \mathcal{T}_h}\int_K\del{\frac{1}{4}\del[0]{\sphih^n}^4 + \frac{1-\varepsilon}{2}\abs[0]{\sphih^n}^2 -\abs[0]{\brh^n}^2 + \frac 12 \abs[0]{\spsih^n}^2}\dif x
		+ \norm[1]{M(\sphih^{n-1})^{1/2}\bph^n}^2_{\mathcal{T}_h}
		\\
		+ \Delta t^{-1}\sum_{K\in \mathcal{T}_h}\int_K\del{\frac 12 \del[0]{\sphih^n}^2\del[1]{\sphih^n - \sphih^{n-1}}^2 + \frac 14 \del[1]{\sphih^n + \sphih^{n-1}}^2\del[1]{\sphih^n - \sphih^{n-1}}^2}\dif x
		\\
		+ \tau_3 \faceprod{\smuh^n - \fmuh^n}{\smuh^n - \fmuh^n}
		+ 2 \tau_1 \Delta t \faceprod{\delta_{\Delta t}^n(\sphih - \fphih)}{\delta_{\Delta t}^n(\sphih - \fphih)}  
		\\
		\leq \Delta t^{-1}\sum_{K\in \mathcal{T}_h}\int_K\del{\frac{1}{4}\del[0]{\sphih^{n-1}}^4 + \frac{1-\varepsilon}{2}\abs[0]{\sphih^{n-1}}^2 -\abs[0]{\brh^{n-1}}^2 + \frac 12 \abs[0]{\spsih^{n-1}}^2}\dif x,
	\end{multline}
\end{proof}
\section{Existence of the solution}
\label{sec:existence}

In this section, we show the existence of a unique solution to the HDG discretization \cref{eq:discrete_form}. The proof involves the following steps. We begin by recasting \cref{eq:discrete_form} as a fixed-point problem of a certain mapping $\mathcal{F}$. Then, we define a compact set $X_h$ such that $\mathcal{F}$ maps $X_h$ into itself. This will show the existence of a fixed-point thanks to Brouwer's fixed-point theorem \cite[Theorem 9.9-2]{ciarlet2013linear}:
\begin{theorem}[Brouwer's fixed-point theorem]
	Let $X_h$ be a compact and convex subset of a finite-dimensional normed vector space, and let $\mathcal{F}: X_h \rightarrow X_h$ be a continuous mapping. Then $\mathcal{F}$ has at least one fixed-point in $X_h$.
	\label{thm:brouwers}
\end{theorem}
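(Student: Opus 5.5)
Since the paper only cites this classical result from \cite[Theorem 9.9-2]{ciarlet2013linear}, I would give the standard proof. It reduces to the closed Euclidean ball and then uses the no-retraction theorem.

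\textbf{Reduction to $\mathbb{R}^m$.} A finite-dimensional normed space is linearly isomorphic to $\mathbb{R}^m$, and all norms on it are equivalent. Such an isomorphism preserves compactness, convexity and continuity. So I may assume $X_h\subset\mathbb{R}^m$ with the Euclidean norm, and I take $X_h$ nonempty.

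\textbf{Reduction to a ball.} Choose a closed ball $B$ containing $X_h$. Let $P:B\to X_h$ be the nearest-point projection, which is well defined and $1$-Lipschitz because $X_h$ is closed and convex. Then $G=\mathcal{F}\circ P$ maps $B$ continuously into $X_h\subset B$. If $Gx=x$, then $x\in X_h$, so $Px=x$ and therefore $\mathcal{F}x=x$. Hence it suffices to prove the theorem for $X_h=B$, and after scaling for the unit ball.

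\textbf{Proof on the ball.} Suppose $g:B\to B$ is continuous with no fixed point. By compactness, $\abs{g(x)-x}\ge\delta>0$ on $B$. By the Weierstrass theorem, pick a polynomial map $\tilde g$ with $\sup_B\abs{\tilde g-g}<\delta/2$. Then $\tilde g/(1+\delta/2)$ is a smooth self-map of $B$ that is still fixed-point free. For each $x$, follow the ray from $\tilde g(x)$ through $x$ until it meets $\partial B$, and call that point $r(x)$. The explicit formula comes from the quadratic $\abs{x+\lambda(x-\tilde g(x))}=1$ with $\lambda\ge0$, whose discriminant is positive. This makes $r$ a smooth retraction of a neighbourhood of $B$ onto $\partial B$.

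\textbf{No smooth retraction exists.} For $t\in[0,1]$ set $f_t(x)=x+t(r(x)-x)$. Each $f_t$ maps $B$ into $B$ by convexity, and $f_t$ is the identity on $\partial B$. I would then proceed in three steps.
\begin{enumerate}
\item For small $t$, the inverse function theorem together with a degree-free connectedness argument shows that $f_t$ is a diffeomorphism of $B$ onto itself. Hence $V(t)=\int_B\det Df_t\,\dif x$ equals $\abs{B}$ near $t=0$.
\item $V(t)$ is a polynomial in $t$, so $V(t)=\abs{B}$ for all $t$.
\item At $t=1$ the map is $f_1=r$, whose image lies in the sphere. Differentiating $\abs{r}^2=1$ gives $Dr^{\!\top}r=0$, so $\det Dr\equiv0$ and $V(1)=0$, a contradiction.
\end{enumerate}

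\textbf{Main obstacle.} The hard step is the no-retraction argument, specifically showing that $f_t$ is onto $B$ for small $t$. I would show that $f_t(\operatorname{int}B)$ is open by invariance of domain for local diffeomorphisms, and closed in $\operatorname{int}B$ by compactness. Connectedness of $\operatorname{int}B$ then gives surjectivity. Injectivity follows because $x\mapsto r(x)-x$ is Lipschitz, so $f_t$ is injective once $t\,\mathrm{Lip}(r-\mathrm{id})<1$.
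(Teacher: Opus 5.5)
Your proof is correct. It necessarily differs from the paper, which gives no argument of its own: the statement is quoted as a black box from \cite[Theorem 9.9-2]{ciarlet2013linear}. The paper uses it only to conclude existence once continuity (\cref{lem:wellposed_cont}) and the self-map property $\mathcal{F}(X_h)\subset X_h$ are established.

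You instead give a self-contained analytic proof. You reduce to a Euclidean ball by a linear isomorphism and the metric projection, then smooth the map by Weierstrass approximation while keeping it fixed-point free; the rescaling by $1+\delta/2$ works because it moves $\tilde g$ by at most $\delta/2$. You then build an explicit smooth retraction and rule it out with the volume argument: $V(t)=\int_B\det Df_t\,dx$ is a polynomial equal to $|B|$ for small $t$ but zero at $t=1$.

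The delicate points you flag check out:
\begin{itemize}
\item On $\partial B$ the discriminant equals $4(1-x\cdot\tilde g(x))^2>0$, because $|\tilde g|\le 1$ and $\tilde g(x)\neq x$. The same sign shows the nonnegative root is $\lambda=0$, so $r=\mathrm{id}$ on $\partial B$.
\item $f_t(\operatorname{int}B)\subset\operatorname{int}B$ for $t<1$, since $|(1-t)x+t\,r(x)|\le(1-t)|x|+t<1$ when $|x|<1$.
\item Openness of the image follows from the inverse function theorem; invariance of domain is not needed.
\item Closedness in $\operatorname{int}B$ holds because a limit point coming from $\partial B$ would be fixed by $f_t$ and so could not lie in $\operatorname{int}B$.
\end{itemize}

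Compared with the standard degree-theoretic or homological proofs, your route needs only the inverse function theorem, change of variables and Weierstrass approximation, at the price of length. The citation keeps the paper focused on the discretization. The paper's application also needs less than you prove: $X_h$ in \eqref{eq:Xh_space} is already a closed ball of radius $\sqrt{R}$ in the inner-product space $(W_h,(\cdot,\cdot)_{\mathcal{T}_h})$. It is therefore isometric to a Euclidean ball, so your projection step is superfluous for the paper's purposes.
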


Hereafter, we assume that $\tau_2 = \tau_1$, as required for energy stability, and $M(\phi) \geq 0$ for all $\phi$.

We will use $a \lesssim b$ to denote that there exists a constant $C>0$ independent of $h$ and $\Delta t$ such that $a \leq Cb$.

\noindent\textbf{Mesh regularity assumption.} For every $K\in \mathcal{T}_h$, there exists a constant $C_1 >0$ such that 
\begin{equation}
	C_1 h_K \leq h_e, \quad \forall e \in \partial K.
	\label{eq:mesh_reg_faces}
\end{equation}
Moreover, there exists a constant $C_2 >0$ such that for every $K\in\mathcal{T}_h$,
\begin{equation}
	C_2 h \leq h_K.
	\label{eq:mesh_reg_domain}
\end{equation}

We will make use of the following norm equivalence \cite[Lemma 1.50]{dipietro_ern}
\begin{lemma}
	Let $\mathcal{T}_h$ be a contact and shape regular mesh sequence. Let $1 \leq p,q \leq \infty$. Then, for all $v_h \in W_h$ and all $K\in\mathcal{T}_h$
	\begin{equation}
		\norm[1]{v_h}_{L^p(K)} \leq C_{inv}^{p,q} h_K^{d(1/p - 1/q)} \norm[1]{v_h}_{L^q(K)},
	\end{equation}
	where $C_{inv}^{p,q}$ only depends on the polynomial degree $k$, the mesh regularity constants $C_1$, $C_2$, and on $p$ and $q$.
	\label{lem:equiv_lp_lq}
\end{lemma}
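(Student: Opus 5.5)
The plan is the standard scaling argument: map each element to a reference element, use that all norms are equivalent on the finite-dimensional polynomial space there, and pull the result back with the Jacobian of the map. Fix $K\in\mathcal{T}_h$ and let $F_K:\hat K\to K$, $F_K(\hat x)=B_K\hat x+b_K$, be the affine map (multilinear for hexahedra) from a fixed reference element $\hat K$. For $v_h\in W_h$ set $\hat v=v_h\circ F_K\in\mathcal{R}^k(\hat K)$, a space of fixed finite dimension. On that space $\norm[0]{\cdot}_{L^p(\hat K)}$ and $\norm[0]{\cdot}_{L^q(\hat K)}$ are norms, so there is $\hat C=\hat C(k,p,q,\hat K)$ with $\norm[0]{\hat v}_{L^p(\hat K)}\le \hat C\norm[0]{\hat v}_{L^q(\hat K)}$. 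This is the only compactness-type input, and it is used only on $\hat K$.

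Next comes the change of variables. For $p<\infty$ it gives $\norm[0]{v_h}_{L^p(K)}=\abs[0]{\det B_K}^{1/p}\norm[0]{\hat v}_{L^p(\hat K)}$, and for $p=\infty$ the determinant factor is simply absent, so the same formula holds with exponent $0$. Applying this to both exponents and inserting the reference inequality yields
\[
\norm[0]{v_h}_{L^p(K)}\le \hat C\,\abs[0]{\det B_K}^{1/p-1/q}\norm[0]{v_h}_{L^q(K)}.
\]
It remains to show $\abs[0]{\det B_K}\simeq h_K^d$ with constants depending only on the regularity constants. That estimate absorbs the factor into $C_{inv}^{p,q}h_K^{d(1/p-1/q)}$ whatever the sign of $1/p-1/q$.

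This last step is the one needing care, since the paper's assumptions \cref{eq:mesh_reg_faces,eq:mesh_reg_domain} are not literally the usual chunkiness condition $h_K/\rho_K\le\sigma$. The upper bound $\abs[0]{\det B_K}=\abs[0]{K}/\abs[0]{\hat K}\lesssim h_K^d$ is trivial. For the lower bound, I would invoke the hypothesis that the mesh sequence is shape regular in the sense of \cite{dipietro_ern}, which gives $\rho_K\gtrsim h_K$ and hence $\abs[0]{K}\gtrsim h_K^d$. For hexahedra, shape regularity likewise controls the Jacobian of the multilinear map above and below by $h_K^d$, so the same scaling holds with constants uniform in $K$. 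Collecting the constants then gives the claim with $C_{inv}^{p,q}$ depending on $k$, $p$, $q$ and the regularity constants. If one wanted uniformity in $h$ rather than $h_K$, \cref{eq:mesh_reg_domain} would let $h_K$ be replaced by $h$.
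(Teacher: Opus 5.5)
Your scaling argument is correct. The paper gives no proof of its own and simply quotes \cite[Lemma~1.50]{dipietro_ern}, whose proof rests on the same norm-equivalence-plus-scaling idea. You are also right that \cref{eq:mesh_reg_faces,eq:mesh_reg_domain} do not by themselves give $|K|\gtrsim h_K^d$, so that bound has to come from the lemma's shape-regularity hypothesis.

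The one real difference is where the scaling is done. Di Pietro and Ern treat general polyhedral elements through a matching shape-regular simplicial submesh. They apply the affine map on each subsimplex and then use contact regularity to reassemble the estimate on $K$: subsimplex diameters are comparable to $h_K$, and each element contains uniformly boundedly many subsimplices. So only affine maps ever appear. You instead map the whole element to $\hat K$. This is more direct for simplices, but for non-affine hexahedra it needs the two-sided bound $|\det DF_K|\simeq h_K^d$ uniformly on $\hat K$.

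That bound belongs to the usual hexahedral regularity assumptions, but it does not follow from a bounded ratio $h_K/\rho_K$ alone. For example, a quadrilateral with an interior angle equal to $\pi$ is chunky, yet its bilinear Jacobian vanishes at that vertex.

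Finally, if $W_h$ consists of polynomials in physical coordinates, then $\hat v$ lies in $\mathcal{R}^{dk}(\hat K)$ rather than $\mathcal{R}^k(\hat K)$. This is harmless, since it is still a fixed finite-dimensional space and the norm equivalence on $\hat K$ still applies.
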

Moreover, we use that for any $1 \leq p < \infty$, using the Cauchy--Schwarz inequality, we can obtain
\begin{equation}
	\norm[1]{fg}_{L^p(\mathcal{T}_h)}^2 = \del[1]{\elementprod{1}{\abs[0]{f}^p}{\abs[0]{g}^p}}^{2/p} \leq \norm[1]{\abs[0]{f}^p}_{\mathcal{T}_h}^{2/p}\norm[1]{\abs[0]{g}^p}_{\mathcal{T}_h}^{2/p} = \norm[1]{f}_{L^{2p}(\mathcal{T}_h)}^{2}\norm[1]{g}_{L^{2p}(\mathcal{T}_h)}^{2}.
	\label{eq:holders}
\end{equation}

We define the mapping $\mathcal{F}$ as follows. For $u \in W_h$, we take $\mathcal{F}(u)$ to be the component $\sphih^n$ of the solution $(\brh^n, \bqh^n, \bph^n, \bsh^n, \sphih^n, \spsih^n, \smuh^n, \fphih^n, \fpsih^n, \fmuh^n) \in \boldsymbol{V}_h^4 \times W_h^3 \times M_h^3$ of
\begin{subequations}
	\begin{align}
		\elementprod{1}{\brh^n}{\testr} - \elementprod{1}{\sphih^n} {\nabla \cdot \testr} + \faceprod{\fphih^n}{\testr \cdot \boldsymbol{n}} &= 0,
		\label{eq:r_disc_linear}
		\\
		\elementprod{1}{\nabla \cdot \brh^n}{\testphi} + \faceprod{\tau_1(\sphih^n - \fphih^n)}{\testphi} + \elementprod{1}{\spsih^n}{\testphi} &= 0,
		\label{eq:psi_disc_linear}
		\\
		\elementprod{1}{\bqh^n}{\testq} - \elementprod{1}{\spsih^n}{\nabla\cdot \testq} + \faceprod{\fpsih^n}{\testq \cdot \boldsymbol{n}}  &= 0,
		\label{eq:q_disc_linear}
		\\
		\elementprod{1}{\nabla \cdot \bqh^n}{\testpsi} + \faceprod{\tau_2(\spsih^n - \fpsih^n)}{\testpsi} 
		\nonumber
		\\+ \elementprod{1}{\smuh^n}{\testpsi} 
		- (1-\varepsilon)\elementprod{1}{\sphih^n}{\testpsi} - \faceprod{\tau_4(\sphih^n - \fphih^n)}{\testpsi} &= 
		\elementprod{1}{u^3}{\testpsi} 
		\nonumber
		\\
		&+ 2\elementprod{1}{\spsih^{n-1}}{\testpsi} 
		\label{eq:mu_disc_linear}
		\\
		\elementprod{1}{\bph^n}{\testp} - \elementprod{1}{\smuh^n}{\nabla \cdot \testp} + \faceprod{\fmuh^n}{\testp \cdot \boldsymbol{n}} &= 0,
		\label{eq:p_disc_linear}
		\\
		\elementprod{1}{\bsh^n}{\tests} - \elementprod{1}{M(\sphih^{n-1})\bph^n}{\tests} &= 0,
		\label{eq:s_disc_linear}
		\\
		\elementprod{1}{\nabla \cdot \bsh^n}{\testmu} + \faceprod{\tau_3(\smuh^n - \fmuh^n)}{\testmu} + \Delta t^{-1}\elementprod{1}{\sphih^n}{\testmu} &= \Delta t^{-1}\elementprod{1}{\sphih^{n-1}}{\testmu},
		\label{eq:phi_disc_linear}
		\\
		\faceprod{\brh^n \cdot \boldsymbol{n} + \tau_1(\sphih^n - \fphih^n)}{\testfphi} &= 0,
		\label{eq:rhat_disc_linear}
		\\
		\faceprod{\bqh^n \cdot \boldsymbol{n} + \tau_2(\spsih^n - \fpsih^n) - \tau_4(\sphih^n - \fphih^n)}{\testfpsi} &= 0,
		\label{eq:qhat_disc_linear}
		\\
		\faceprod{\bsh^n \cdot \boldsymbol{n} + \tau_3(\smuh^n - \fmuh^n)}{\testfmu} &= 0.
		\label{eq:shat_disc_linear}
	\end{align}
	\label{eq:linear_discrete_form}
\end{subequations}
We note that a fixed-point of $\mathcal{F}$ is a solution to \cref{eq:discrete_form}. Moreover, for a given $u \in W_h$, \cref{eq:linear_discrete_form} is a linear problem. First, note that by taking $\testr = -\bqh^n,\,\testphi = \spsih^n,\,\testq = \brh^n,\,\testpsi = -\sphih^n, \,\testp=\Delta t \bsh^n,\,\tests=-\Delta t \bph^n, \,\testmu=\Delta t \smuh^n, \,\testfphi = -\fpsih^n, \,\testfpsi = \fphih^n, \,\testfmu = -\Delta t \fmuh^n$ in \cref{eq:linear_discrete_form}, and adding up the equations, we obtain the following relation
\begin{align}
	\norm[1]{\spsih^n}^2_{\mathcal{T}_h} + (1-\varepsilon)\norm[1]{\sphih^n}^2_{\mathcal{T}_h} + \Delta t\norm[1]{M^{1/2}(\sphih^{n-1}) \bph^n}^2_{\mathcal{T}_h} + \tau_3 \Delta t \norm[1]{\smuh^n - \fmuh^n}^2_{\partial\mathcal{T}_h} 
	\nonumber
	\\
	+ \tau_4 \norm[1]{\sphih^n - \fphih^n}^2_{\partial\mathcal{T}_h} = -\elementprod{1}{u^3}{\sphih^n} - 2\elementprod{1}{\spsih^{n-1}}{\sphih^n} + \elementprod{1}{\sphih^{n-1}}{\smuh^n}.
	\label{eq:relation_linear}
\end{align}

We will now show that $\mathcal{F}$ is a well-defined continuous mapping on
\begin{equation}
	X_h = \cbr[1]{\sphih \in W_h : \norm[1]{\sphih}^2_{\mathcal{T}_h} \leq R},
	\label{eq:Xh_space}
\end{equation}
with
$$
R = \frac{1-\varepsilon}{3\del[1]{C_{inv}^{6,2}}^{3}\sqrt{5 - \varepsilon}}\sqrt{3} \,h^{d}.
$$

\begin{lemma}
	For a given mesh of size $h$, the mapping $\mathcal{F}$ is well-defined and continuous on the compact and convex subset $X_h$ defined in \cref{eq:Xh_space}.
	\label{lem:wellposed_cont}
\end{lemma}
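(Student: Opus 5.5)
Since $X_h$ is a closed ball in the finite-dimensional space $W_h$, it is compact and convex, so the real content is that $\mathcal{F}$ is well defined (the linear problem \cref{eq:linear_discrete_form} is uniquely solvable for each $u\in X_h$) and continuous. The plan is to treat \cref{eq:linear_discrete_form} as a square linear system $A\,\boldsymbol{x} = \boldsymbol{b}(u)$, in which the matrix $A$ does not depend on $u$ (only on $\sphih^{n-1}$, $\Delta t$ and the $\tau_i$) and $u$ enters only through the right-hand side term $\elementprod{1}{u^3}{\testpsi}$. Well-posedness then reduces to injectivity of $A$: show that the homogeneous problem ($u=0$, $\spsih^{n-1}=0$, $\sphih^{n-1}=0$ on the right-hand sides, but keeping $M(\sphih^{n-1})$ in \cref{eq:s_disc_linear}) has only the trivial solution.

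For injectivity I use the energy identity \cref{eq:relation_linear}, derived by the indicated choice of test functions. With zero data its right-hand side vanishes, so every term on the left is zero. Since $1-\varepsilon>0$ and $\tau_3,\tau_4>0$, this gives $\spsih^n=0$, $\sphih^n=0$, $\fphih^n=\sphih^n=0$ on $\partial\mathcal{T}_h$ and $\smuh^n=\fmuh^n$ on $\partial\mathcal{T}_h$. Then:
\begin{itemize}
\item \cref{eq:r_disc_linear} gives $\brh^n=0$.
\item \cref{eq:rhat_disc_linear} is then trivially satisfied.
\item \cref{eq:q_disc_linear} gives $\elementprod{1}{\bqh^n}{\testq} = -\faceprod{\fpsih^n}{\testq\cdot\boldsymbol{n}}$.
\item \cref{eq:psi_disc_linear} and \cref{eq:qhat_disc_linear} involve the remaining unknowns.
\end{itemize}
The delicate part is recovering $\fpsih^n$, $\bqh^n$, $\smuh^n$, $\fmuh^n$, $\bph^n$ and $\bsh^n$, because $M\ge 0$ may vanish and give no control on $\bph^n$. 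I would test \cref{eq:phi_disc_linear} with $\testmu=\smuh^n$ and \cref{eq:p_disc_linear} with $\testp=\bph^n$, and combine them with \cref{eq:shat_disc_linear} using $\smuh^n=\fmuh^n$ on faces. For $\bqh^n$, I would test \cref{eq:mu_disc_linear} with $\testpsi$ chosen as $\nabla\cdot$-related elementwise functions, and \cref{eq:qhat_disc_linear} with $\testfpsi=\fpsih^n$, to get $\bqh^n=0$ and $\fpsih^n=0$, $\smuh^n=0$ in turn. I expect this elimination cascade, especially handling the $\mu$-block when $M$ degenerates, to be the main obstacle. If direct elimination stalls, I would alternatively use the standard HDG lifting argument: a facet function with $\faceprod{\bar w}{\testq\cdot\boldsymbol n}=0$ for all $\testq\in\boldsymbol V_h$ vanishes, and a function in $W_h$ that is constant across elements and matches a continuous trace is globally constant, with the constant fixed to zero by the $\Delta t^{-1}$ mass term.

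Continuity then follows immediately. $\mathcal{F}(u)$ is a fixed linear map (a component of $A^{-1}$) applied to $\boldsymbol b(u)$, and $u\mapsto \elementprod{1}{u^3}{\cdot}$ is polynomial, hence continuous on the finite-dimensional space $W_h$. The radius $R$ plays no role in this lemma; it is only needed later to show $\mathcal{F}(X_h)\subset X_h$.
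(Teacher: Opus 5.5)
Your compactness remark and your continuity argument are fine, and the continuity part takes a simpler route than the paper. Since $u$ enters \cref{eq:linear_discrete_form} only through $\elementprod{1}{u^3}{\testpsi}$, $\mathcal{F}$ is a fixed linear map composed with a polynomial map on a finite-dimensional space, so no estimate is needed once injectivity is known. The paper instead proves continuity quantitatively, using \cref{eq:relation_linear} for differences together with H\"older and inverse inequalities.

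The gap is in injectivity, which is the real content of the lemma. After the energy identity you leave the elimination open, and the tests you propose would not close it.
\begin{itemize}
\item Once $\bsh^n=\boldsymbol{0}$, $\sphih^n=0$ and $\smuh^n=\fmuh^n$ on $\partial\mathcal{T}_h$, equations \cref{eq:phi_disc_linear} and \cref{eq:shat_disc_linear} hold identically. Testing them with $\smuh^n$, $\fmuh^n$ and combining with \cref{eq:p_disc_linear} therefore says nothing about $\smuh^n$; that equation just defines $\bph^n$ from any given $\smuh^n$.
\item Testing \cref{eq:mu_disc_linear} with divergence-type functions cannot give $\bqh^n=\boldsymbol{0}$ either. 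Once the other variables vanish, that equation only expresses $\smuh^n$ in terms of $\bqh^n$ and $\fpsih^n$.
\item Your lifting fallback is circular when $M$ degenerates. Making $\smuh^n$ piecewise constant needs $\bph^n=\boldsymbol{0}$, but $\bph^n$ is only accessible through $\smuh^n$. Also, the $\Delta t^{-1}$ mass term in \cref{eq:phi_disc_linear} multiplies $\sphih^n$, not $\smuh^n$, so it cannot fix a constant in $\smuh^n$.
\end{itemize}

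The paper closes the cascade as follows.
\begin{enumerate}
\item The identity also gives $M(\sphih^{n-1})\bph^n=\boldsymbol{0}$. Testing \cref{eq:s_disc_linear} with $\tests=\bsh^n$ then yields $\bsh^n=\boldsymbol{0}$ immediately.
\item The decisive pairing is \cref{eq:q_disc_linear} with $\testq=\bqh^n$ together with \cref{eq:qhat_disc_linear} with $\testfpsi=-\fpsih^n$. You wrote down both halves but paired the second with \cref{eq:mu_disc_linear}. Using $\spsih^n=0$ and $\sphih^n=\fphih^n=0$, the sum is $\norm[1]{\bqh^n}^2_{\mathcal{T}_h}+\tau_2\norm[1]{\fpsih^n}^2_{\partial\mathcal{T}_h}=0$.
\item Then \cref{eq:mu_disc_linear} collapses to $\elementprod{1}{\smuh^n}{\testpsi}=0$ for all $\testpsi$. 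Hence $\smuh^n=0$, and so $\fmuh^n=0$.
\item Finally, $\testp=\bph^n$ in \cref{eq:p_disc_linear} gives $\bph^n=\boldsymbol{0}$ without any use of $M$.
\end{enumerate}
So degeneracy of $M$ is never actually an obstacle. $\smuh^n$ is controlled by the mass term in \cref{eq:mu_disc_linear}, and $\bph^n$ is recovered from $\smuh^n$ through \cref{eq:p_disc_linear}.
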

\begin{proof}
	
	\noindent\textbf{Well-posedness.}
	Since \cref{eq:linear_discrete_form} is a linear problem, we take all source terms equal to zero and show that the only solution is the trivial one. In this case, the relation \cref{eq:relation_linear} gives
	\begin{align}
		\norm[1]{\spsih^n}^2_{\mathcal{T}_h} + (1-\varepsilon)\norm[1]{\sphih^n}^2_{\mathcal{T}_h} + \Delta t \norm[1]{M^{1/2}(\sphih^{n-1}) \bph^n}^2_{\mathcal{T}_h} &+ \tau_3 \Delta t \norm[1]{\smuh^n - \fmuh^n}^2_{\partial\mathcal{T}_h} 
		\nonumber
		\\
		&+ \tau_4 \norm[1]{\sphih^n - \fphih^n}^2_{\partial\mathcal{T}_h} = 0.
	\end{align}
	Thus, $M(\sphih^{n-1})\bph^n = \boldsymbol{0}$ in $\Omega$, $\sphih^n = \spsih^n = 0$ in $\Omega$, and $\smuh^n=\fmuh^n$, $\fphih^n = 0$ on $\partial K$, for all $K\in\mathcal{T}_h$. We take $\tests = \bsh^n$ in \cref{eq:s_disc_linear} and obtain that $\bsh^n = \boldsymbol{0}$ in $\Omega$.
	Taking $\testq = \bqh^n$ and $\testfpsi = -\fpsih^n$ in \cref{eq:q_disc_linear} and \cref{eq:qhat_disc_linear} respectively, and adding up, we obtain
	\begin{equation}
		\norm[1]{\bqh^n}^2_{\mathcal{T}_h} + \tau_2\norm[1]{\fpsih^n}^2_{\partial\mathcal{T}_h} = 0,
	\end{equation}
	from where we conclude that $\bqh^n = \boldsymbol{0}$ and $\fpsih^n = 0$. Then, taking $\testpsi = \smuh^n$ in \cref{eq:mu_disc_linear}, we conclude that $\smuh^n = 0$, and therefore, $\fmuh^n = 0$. Taking $\testr = \brh^n$ in \cref{eq:r_disc_linear} we conclude that $\brh^n = \boldsymbol{0}$. Finally, taking $\testp = \bph^n$ in \cref{eq:p_disc_linear}, and since $\smuh^n = 0$ and $\fmuh^n = 0$, we obtain $\bph^n = \boldsymbol{0}$.
	
	\noindent\textbf{Continuity.}
	We consider a sequence $\cbr[0]{u_m}_{m=1}^{\infty}$, where $u_m \in X_h, \forall m$ such that $\displaystyle \lim_{m\rightarrow \infty} u_m = u_\ast$. Since $X_h$ is compact, then $u_\ast \in X_h$. Let $(\br_m, \bq_m, \bp_m, \bs_m, \phi_m, \allowbreak  \psi_m, \mu_m, \bar{\phi}_m,\bar{\psi}_m, \bar{\mu}_m)$ be the solution of \cref{eq:linear_discrete_form} corresponding to $u_m$, and similarly, $(\br_\ast, \bq_\ast, \bp_\ast, \bs_\ast,\phi_\ast, \allowbreak \psi_\ast, \mu_\ast, \bar{\phi}_\ast, \bar{\psi}_\ast, \bar{\mu}_\ast)$ be the solution of \cref{eq:linear_discrete_form} corresponding to $u_\ast$. We denote the difference by $(\br^{m,\ast}, \bq^{m,\ast}, \bp^{m,\ast}, \bs^{m,\ast}, \phi^{m,\ast}, \allowbreak \psi^{m,\ast}, \mu^{m,\ast}, \bar{\phi}^{m,\ast}, \bar{\psi}^{m,\ast}, \bar{\mu}^{m,\ast})$, i.e., $\br^{m,\ast} = \br_m - \br_\ast$, and similar for the other variables. We want to show that $\displaystyle \lim_{m\rightarrow \infty} \phi_m = \phi_\ast$. The relation in \cref{eq:relation_linear} for the difference is
	\begin{align}
		&\norm[1]{\psi^{m,\ast}}_{\mathcal{T}_h}^2
		+ (1-\varepsilon)\norm[1]{\phi^{m,\ast}}^2_{\mathcal{T}_h} 
		+ \Delta t\norm[1]{M^{1/2}(\sphih^{n-1})\bp^{m,\ast}}_{\mathcal{T}_h}^2 \nonumber
		\\
		&+ \tau_3\Delta t\norm[1]{\smu^{m,\ast} - \fmu^{m,\ast}}_{\partial\mathcal{T}_h}^2
		+ \tau_4\norm[1]{\sphi^{m,\ast} - \fphi^{m,\ast}}_{\partial\mathcal{T}_h}^2 = -\elementprod{1}{u_m^3 - (u_\ast)^3}{\phi^{m,\ast}}.
		\label{eq:aux_lim2}
	\end{align}
	Using Cauchy--Schwarz and Young's inequalities on the right hand side, we obtain
	\begin{equation}
		\norm[1]{\phi_m - \phi_\ast}^2_{\mathcal{T}_h} = \norm[1]{\phi^{m,\ast}}^2_{\mathcal{T}_h} \lesssim \norm[1]{u_m^3 - u_\ast^3}^2_{\mathcal{T}_h}.
	\end{equation}
	Using the identity $a^3 - b^3 = (a-b)(a^2+ab+b^2)$, the triangle inequality, and the inequality in \cref{eq:holders} on the right hand side, we have
	\begin{align}
		\norm[1]{\phi_m - \phi_\ast}^2_{\mathcal{T}_h} 
		& \lesssim \norm[2]{\del[1]{u_m - u_\ast}\del[1]{u_m^2 + u_mu_\ast + u_\ast}}_{\mathcal{T}_h}^2
		\notag
		\\
		& \leq \norm[1]{u_m - u_\ast}_{L^4(\mathcal{T}_h)}^2 \norm[1]{u_m^2 + u_mu_\ast + u_\ast}_{L^4(\mathcal{T}_h)}^2
		\notag
		\\
		& \lesssim \norm[1]{u_m - u_\ast}^2_{L^4(\mathcal{T}_h)}\del{\norm[1]{u_m}_{L^8(\mathcal{T}_h)}^4 + \norm[1]{u_mu_\ast}^2_{L^4(\mathcal{T}_h)} + \norm[1]{u_\ast}^4_{L^8(\mathcal{T}_h)}}
		\notag
		\\
		& \leq \norm[1]{u_m - u_\ast}^2_{L^4(\mathcal{T}_h)}\del{\norm[1]{u_m}^4_{L^8(\mathcal{T}_h)} + \norm[1]{u_m}^2_{L^8(\mathcal{T}_h)}\norm[1]{u_\ast}^2_{L^8(\mathcal{T}_h)} + \norm[1]{u_\ast}^4_{L^8(\mathcal{T}_h)}}.
	\end{align}
	Moreover, using the inequality in \cref{lem:equiv_lp_lq}, we obtain
	\begin{align}
		\norm[1]{\phi_m - \phi_\ast}^2_{\mathcal{T}_h} & \lesssim h^{-3d/2} \norm[1]{u_m - u_\ast}^2_{L^4(\mathcal{T}_h)}\del{\norm[1]{u_m}^4_{\mathcal{T}_h} + \norm[1]{u_m}^2_{\mathcal{T}_h}\norm[1]{u_\ast}^2_{\mathcal{T}_h} + \norm[1]{u_\ast}^4_{\mathcal{T}_h}}.
	\end{align}
	Since $u_m, \, u_\ast \in X_h$, we have
	\begin{align}
		\norm[1]{\phi_m - \phi_\ast}^2_{\mathcal{T}_h} & \lesssim \norm[1]{u_m - u_\ast}^2_{\mathcal{T}_h}h^{-3d/2}R^4
		\notag
		\\
		&= \norm[1]{u_m - u_\ast}^2_{\mathcal{T}_h}\del[0]{12C_{inv}^{6,2}}^{-2}h^{5d/2}.
	\end{align}
	Thus, if $u_m \rightarrow u_\ast$, then $\phi_m \rightarrow \phi_\ast$, and we conclude that $\mathcal{F}$ is continuous on $X_h$.
\end{proof}

We are now ready to show the existence of a solution.

\begin{theorem}[Existence of a solution]
	Assume that $\spsih^{n-1}$ and $\sphih^{n-1}$ are small enough, i.e.,
	\begin{align}
		\frac{7-4\varepsilon}{1-\varepsilon} \norm[1]{\sphih^{n-1}}^2_{\mathcal{T}_h} 
		&+ \frac{11-3\varepsilon}{(1-\varepsilon)^2}\norm[1]{\spsih^{n-1}}^2_{\mathcal{T}_h}
		\notag
		\\
		&+ \frac{\tau_4}{1-\varepsilon}\norm[1]{\sphih^{n-1} - \fphih^{n-1}}^2_{\partial\mathcal{T}_h} \leq \frac{1-\varepsilon}{9\del[1]{C_{inv}^{6,2}}^3\sqrt{5-\varepsilon}} 2\sqrt{3}\,h^d.
		\label{eq:small_data_cond}
	\end{align}
	Then, the discrete problem in \cref{eq:discrete_form} has at least one solution, if $\tau_1,\tau_2,\tau_3,\tau_4 >0$ and $\tau_2 = \tau_1$.
\end{theorem}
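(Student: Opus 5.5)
The plan is to apply Brouwer's fixed-point theorem (\cref{thm:brouwers}) to the map $\mathcal{F}$ on the set $X_h$ of \cref{eq:Xh_space}. \Cref{lem:wellposed_cont} already shows that $\mathcal{F}$ is well defined and continuous on $X_h$. The set $X_h$ is a closed ball in the finite-dimensional space $W_h$, so it is compact and convex. What remains is to show that $\mathcal{F}(X_h)\subset X_h$. That is, for every $u\in W_h$ with $\norm[1]{u}^2_{\mathcal{T}_h}\le R$, the component $\sphih^n$ of the solution of \cref{eq:linear_discrete_form} must satisfy $\norm[1]{\sphih^n}^2_{\mathcal{T}_h}\le R$. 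A fixed point of $\mathcal{F}$ is then a solution of \cref{eq:discrete_form}.

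To obtain this bound I start from the identity \cref{eq:relation_linear} and estimate each of the three terms on its right-hand side. The cubic term is bounded by Cauchy--Schwarz, then \cref{eq:holders}, then \cref{lem:equiv_lp_lq} with $p=6$, $q=2$:
\[
\abs[1]{\elementprod{1}{u^3}{\sphih^n}}\le \norm[1]{u}^3_{L^6(\mathcal{T}_h)}\norm[1]{\sphih^n}_{\mathcal{T}_h}\le \del[1]{C_{inv}^{6,2}}^3h^{-d}\norm[1]{u}^3_{\mathcal{T}_h}\norm[1]{\sphih^n}_{\mathcal{T}_h}.
\]
Young's inequality then moves a fraction of $(1-\varepsilon)\norm[1]{\sphih^n}^2$ to the left. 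The term $2\elementprod{1}{\spsih^{n-1}}{\sphih^n}$ is handled by Young's inequality in the same way.

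The term $\elementprod{1}{\sphih^{n-1}}{\smuh^n}$ is the main obstacle, because $\smuh^n$ does not appear on the left. I will remove it using the scheme at level $n-1$, in the spirit of the energy-stability proof. Testing \cref{eq:mu_disc_linear} with $\testpsi=\sphih^{n-1}$ gives
\[
\elementprod{1}{\smuh^n}{\sphih^{n-1}} = (1-\varepsilon)\elementprod{1}{\sphih^n}{\sphih^{n-1}} +\elementprod{1}{u^3}{\sphih^{n-1}}+2\elementprod{1}{\spsih^{n-1}}{\sphih^{n-1}} -\elementprod{1}{\nabla\cdot\bqh^n}{\sphih^{n-1}}-\faceprod{\tau_2(\spsih^n-\fpsih^n)-\tau_4(\sphih^n-\fphih^n)}{\sphih^{n-1}}.
\]
The divergence and face terms are then rewritten using \cref{eq:r_disc_n1} and \cref{eq:rhat_disc} at level $n-1$, tested with $\bqh^n$ and with $\fpsih^n$, $\fphih^n$, together with \cref{eq:q_disc_linear}. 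This should turn them into a volume pairing of $\spsih^n$ with $\spsih^{n-1}$, plus a face pairing of $\tau_4(\sphih^n-\fphih^n)$ with $\sphih^{n-1}-\fphih^{n-1}$. This is exactly why the jump $\norm[1]{\sphih^{n-1}-\fphih^{n-1}}^2_{\partial\mathcal{T}_h}$ appears in \cref{eq:small_data_cond}. Each resulting product is absorbed by Young's inequality into $\norm[1]{\spsih^n}^2$, $(1-\varepsilon)\norm[1]{\sphih^n}^2$ and $\tau_4\norm[1]{\sphih^n-\fphih^n}^2_{\partial\mathcal{T}_h}$. The leftover data terms have weights of the form $(7-4\varepsilon)/(1-\varepsilon)$ and similar. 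I expect some trial and error in choosing the Young weights so that they reproduce the constants in \cref{eq:small_data_cond}.

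After these estimates I expect an inequality of the form
\[
\frac{1-\varepsilon}{c}\norm[1]{\sphih^n}^2_{\mathcal{T}_h}\le \frac{c'}{1-\varepsilon}\del[1]{C_{inv}^{6,2}}^6h^{-2d}R^3 + D_{n-1},
\]
where $D_{n-1}$ is the left-hand side of \cref{eq:small_data_cond}. With the specific $R$ chosen in \cref{eq:Xh_space}, the cubic contribution $\propto h^{-2d}R^3$ should equal a fixed fraction of $R$ (this is the role of the factors $3$, $\sqrt3$ and $\sqrt{5-\varepsilon}$). The small-data hypothesis \cref{eq:small_data_cond}, whose right-hand side is $\tfrac{2}{3}R$, then covers the rest, giving $\norm[1]{\sphih^n}^2_{\mathcal{T}_h}\le R$. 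Brouwer's theorem then completes the proof.
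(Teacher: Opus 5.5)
Your proposal follows the paper's proof essentially step for step. The shared route is: start from \cref{eq:relation_linear}, eliminate $\elementprod{1}{\sphih^{n-1}}{\smuh^n}$ via \cref{eq:mu_disc_linear} with $\testpsi=\sphih^{n-1}$ and the level-$(n-1)$ equations to reach \cref{eq:exist1}, apply Young's inequality and the $L^6$--$L^2$ inverse estimate, and split $R$ into $\tfrac13 R$ (the cubic term, given the chosen $R$) plus $\tfrac23 R$ (the small-data bound) before invoking Brouwer. The one correction is to your list of auxiliary equations: obtaining $\elementprod{1}{\spsih^{n-1}}{\spsih^n}$ also requires \cref{eq:psi_disc_n1} tested with $\spsih^n$, and \cref{eq:qhat_disc_linear} tested with $\fphih^{n-1}$ (not $\fphih^n$); moreover, the leftover $\tau_1$/$\tau_2$ face cross terms cancel only because $\tau_2=\tau_1$.
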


\begin{proof}
	We will show that if $u \in X_h$, then $\mathcal{F}(u) \in X_h$. 
	From \cref{eq:relation_linear}, we have
	\begin{align}
		\norm[1]{\spsih^n}^2_{\mathcal{T}_h} + (1-\varepsilon)\norm[1]{\sphih^n}^2_{\mathcal{T}_h} + \Delta t\norm[1]{M^{1/2}(\sphih^{n-1}) \bph^n}^2_{\mathcal{T}_h} + \tau_3 \Delta t \norm[1]{\smuh^n - \fmuh^n}^2_{\partial\mathcal{T}_h} 
		\nonumber
		\\
		+ \tau_4 \norm[1]{\sphih^n - \fphih^n}^2_{\partial\mathcal{T}_h} = -\elementprod{1}{u^3}{\sphih^n} - 2\elementprod{1}{\spsih^{n-1}}{\sphih^n} + \elementprod{1}{\sphih^{n-1}}{\smuh^n}.
	\end{align}
	Moreover, we take $\testpsi = \sphih^{n-1}$ in \cref{eq:mu_disc_linear}, $\testq = -\brh^{n-1}$ in \cref{eq:q_disc_linear} and $\testfpsi = \fphih^{n-1}$ in \cref{eq:qhat_disc_linear}. Furthermore, since \cref{eq:linear_discrete_form} holds for $n-1$ as well, we take $\testr = \bqh^n$, $\testphi = -\spsih^n$ and $\testfphi = \fpsih^n$ and add up to obtain
	\begin{multline}
		\norm[1]{\spsih^n}^2_{\mathcal{T}_h} +
		(1-\varepsilon)\norm[1]{\sphih^n}^2_{\mathcal{T}_h} + \Delta t\norm[1]{M^{1/2}(\sphih^{n-1}) \bph^n}^2_{\mathcal{T}_h} + \tau_3 \Delta t \norm[1]{\smuh^n - \fmuh^n}^2_{\partial\mathcal{T}_h} 
		\\
		+ \tau_4 \norm[1]{\sphih^n - \fphih^n}^2_{\partial\mathcal{T}_h} = -\elementprod{1}{u^3}{\sphih^n} + \elementprod{1}{u^3}{\sphih^{n-1}} - 2\elementprod{1}{\spsih^{n-1}}{\sphih^n} + 2\elementprod{1}{\spsih^{n-1}}{\sphih^{n-1}} 
		\\
		+ \elementprod{1}{\spsih^{n-1}}{\spsih^n}
		+ (1-\varepsilon)\elementprod{1}{\sphih^{n}}{\sphih^{n-1}} + \tau_4\faceprod{\sphih^n - \fphih^n}{\sphih^{n-1} - \fphih^{n-1}}.
		\label{eq:exist1}
	\end{multline}
	Using Cauchy--Schwarz and Young's inequalities on the right hand side of \cref{eq:exist1}, we have
	\begin{multline}
		\norm[1]{\spsih^n}^2_{\mathcal{T}_h} + (1-\varepsilon)\norm[1]{\sphih^n}^2_{\mathcal{T}_h} + \Delta t\norm[1]{M^{1/2}(\sphih^{n-1}) \bph^n}^2_{\mathcal{T}_h} + \tau_3 \Delta t \norm[1]{\smuh^n - \fmuh^n}^2_{\partial\mathcal{T}_h} 
		\\
		+ \tau_4 \norm[1]{\sphih^n - \fphih^n}^2_{\partial\mathcal{T}_h} \leq \frac{1}{2\varepsilon_1}\norm[1]{u^3}^2_{\mathcal{T}_h} + \frac{\varepsilon_1}{2}\norm[1]{\sphih^n}^2_{\mathcal{T}_h} + \frac{1}{2\varepsilon_2}\norm[1]{u^3}^2_{\mathcal{T}_h} + \frac{\varepsilon_2}{2}\norm[1]{\sphih^{n-1}}^2_{\mathcal{T}_h} 
		\\
		+ \frac{1}{\varepsilon_3}\norm[1]{\spsih^{n-1}}^2_{\mathcal{T}_h} + \varepsilon_3\norm[1]{\sphih^n}^2_{\mathcal{T}_h} + \frac{1}{\varepsilon_4}\norm[1]{\spsih^{n-1}}^2_{\mathcal{T}_h} + \varepsilon_4\norm[1]{\sphih^{n-1}}^2_{\mathcal{T}_h} 
		+ \frac{1-\varepsilon}{2\varepsilon_5}\norm[1]{\sphih^{n-1}}^2_{\mathcal{T}_h} 
		\\
		+ (1-\varepsilon)\frac{\varepsilon_5}{2}\norm[1]{\sphih^n}^2_{\mathcal{T}_h} + \frac{\tau_4}{2\varepsilon_6}\norm[1]{\sphih^{n-1} - \fphih^{n-1}}^2_{\partial\mathcal{T}_h} + \tau_4\frac{\varepsilon_6}{2}\norm[1]{\sphih^{n} - \fphih^{n}}^2_{\partial\mathcal{T}_h}
		\\
		+ \frac{1}{2\varepsilon_7}\norm[1]{\spsih^{n-1}}^2_{\mathcal{T}_h} + \frac{\varepsilon_7}{2}\norm[1]{\spsih^n}^2_{\mathcal{T}_h}.
	\end{multline}
	Taking $\varepsilon_1 = \varepsilon_3 = \varepsilon_5 = \frac{1-\varepsilon}{4-\varepsilon}$, $\varepsilon_2 = \varepsilon_4 = 1-\varepsilon$, $\varepsilon_6 = \varepsilon_7 = 1$, and rearranging terms, we obtain
	\begin{multline}
		\frac{1}{2}\norm[1]{\spsih^n}^2_{\mathcal{T}_h} + \frac{1-\varepsilon}{2}\norm[1]{\sphih^n}^2_{\mathcal{T}_h} + \Delta t\norm[1]{M^{1/2}(\sphih^{n-1}) \bph^n}^2_{\mathcal{T}_h} + \tau_3 \Delta t \norm[1]{\smuh^n - \fmuh^n}^2_{\partial\mathcal{T}_h} 
		\\
		+ \frac{\tau_4}{2}\norm[1]{\sphih^n - \fphih^n}^2_{\partial\mathcal{T}_h} \leq \frac{4-\varepsilon}{2(1-\varepsilon)}\norm[1]{u^3}^2_{\mathcal{T}_h} + \frac{1}{2(1-\varepsilon)}\norm[1]{u^3}^2_{\mathcal{T}_h} + \frac{1-\varepsilon}{2}\norm[1]{\sphih^{n-1}}^2_{\mathcal{T}_h} 
		\\
		+ \frac{4-\varepsilon}{1-\varepsilon}\norm[1]{\spsih^{n-1}}^2_{\mathcal{T}_h} + \frac{1}{1-\varepsilon}\norm[1]{\spsih^{n-1}}^2_{\mathcal{T}_h} + (1-\varepsilon)\norm[1]{\sphih^{n-1}}^2_{\mathcal{T}_h} 
		\\
		+ \frac{4-\varepsilon}{2}\norm[1]{\sphih^{n-1}}^2_{\mathcal{T}_h} + \frac{\tau_4}{2}\norm[1]{\sphih^{n-1} - \fphih^{n-1}}^2_{\partial\mathcal{T}_h} + \frac{1}{2}\norm[1]{\spsih^{n-1}}^2_{\mathcal{T}_h}.
	\end{multline}
	That is,
	\begin{multline}
		\norm[1]{\sphih^n}^2_{\mathcal{T}_h} \leq \frac{5-\varepsilon}{(1-\varepsilon)^2}\norm[1]{u}^6_{L^6(\mathcal{T}_h)} + \frac{7-4\varepsilon}{1-\varepsilon} \norm[1]{\sphih^{n-1}}^2_{\mathcal{T}_h} 
		\\
		+ \frac{11-3\varepsilon}{(1-\varepsilon)^2}\norm[1]{\spsih^{n-1}}^2_{\mathcal{T}_h}
		+ \frac{\tau_4}{1-\varepsilon}\norm[1]{\sphih^{n-1} - \fphih^{n-1}}^2_{\partial\mathcal{T}_h}.
	\end{multline}
	Using \cref{lem:equiv_lp_lq} on the first term of the right hand side, and $\norm[1]{u}^2_{\mathcal{T}_h} \leq R$, we have
	\begin{multline}
		\norm[1]{\sphih^n}^2_{\mathcal{T}_h} \leq \frac{5-\varepsilon}{(1-\varepsilon)^2}\del[1]{C_{inv}^{6,2}}^6h^{-2d}R^3 + \frac{7-4\varepsilon}{1-\varepsilon} \norm[1]{\sphih^{n-1}}^2_{\mathcal{T}_h} 
		\\
		+ \frac{11-3\varepsilon}{(1-\varepsilon)^2}\norm[1]{\spsih^{n-1}}^2_{\mathcal{T}_h}
		+ \frac{\tau_4}{1-\varepsilon}\norm[1]{\sphih^{n-1} - \fphih^{n-1}}^2_{\partial\mathcal{T}_h}.
	\end{multline}
	If the small data assumption in \cref{eq:small_data_cond} is satisfied, then we have
	\begin{align}
		\norm[1]{\sphih^n}^2_{\mathcal{T}_h} &\leq \frac{5-\varepsilon}{(1-\varepsilon)^2}\del[1]{C_{inv}^{6,2}}^6h^{-2d}R^3 + \frac{1-\varepsilon}{9\del[1]{C_{inv}^{6,2}}^3\sqrt{5-\varepsilon}} 2\sqrt{3}\,h^d
		\nonumber
		\\
		&= \frac{5-\varepsilon}{(1-\varepsilon)^2}\del[1]{C_{inv}^{6,2}}^6h^{-2d}\del{\frac{1-\varepsilon}{3\del[1]{C_{inv}^{6,2}}^{3}\sqrt{5 - \varepsilon}}\sqrt{3} \,h^{d}}^3 
		\nonumber
		\\
		&\quad\quad\quad+ \frac{1-\varepsilon}{9\del[1]{C_{inv}^{6,2}}^3\sqrt{5-\varepsilon}} 2\sqrt{3}\,h^d
		\nonumber
		\\
		&= \frac{1-\varepsilon}{3\del[1]{C_{inv}^{6,2}}^{3}\sqrt{5 - \varepsilon}}\sqrt{3} \,h^{d} = R.
	\end{align}
	By Brouwer's fixed-point \cref{thm:brouwers}, $\mathcal{F}$ has at least one fixed-point in $X_h$, and therefore, \cref{eq:discrete_form} has at least one solution.
\end{proof}
Before showing the uniqueness of the solution, we prove that any solution satisfies a bound.
\begin{corollary}\label{corollary-4.5}
	Any fixed-point of $\mathcal{F}$ satisfies the following bound
	\begin{equation}
		\norm[1]{\sphih^n}^2_{\mathcal{T}_h} \leq \frac{4}{1-\varepsilon}B(\sphih^{n-1},\fphih^{n-1}, \spsih^{n-1}),
		\label{eq:bound_sol}
	\end{equation}
	where
	\begin{align}
		B(\sphih^{n-1},\fphih^{n-1}, \spsih^{n-1}) :=& C_1\norm[1]{\sphih^{n-1}}_{\mathcal{T}_h}^2 + C_2\norm[1]{\sphih^{n-1}}^4_{L^4(\mathcal{T}_h)} 
		\nonumber
		\\
		&+C_3\norm[1]{\spsih^{n-1}}_{\mathcal{T}_h}^2+\frac{\tau_4}{2}\norm[1]{\sphih^{n-1} - \fphih^{n-1}}^2_{\partial\mathcal{T}_h},
	\end{align}
	with $C_1 = 4+\frac{1-\varepsilon}{2}$, $C_2 = \frac{1}{4}$ and $C_3 = \frac{1}{2}+\frac{4}{1-\varepsilon}$.
\end{corollary}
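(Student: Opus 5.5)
At a fixed point we have $u=\sphih^n$, so the nonlinear term becomes $(\,(\sphih^n)^3,\cdot\,)_{\mathcal{T}_h}$. This lets us use monotonicity of the cubic instead of the inverse inequality of \cref{lem:equiv_lp_lq}, and the result is an $h$-independent bound. The plan is to redo the computation leading to \cref{eq:exist1} with $u=\sphih^n$ and treat the cubic terms exactly.

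\textbf{Step 1: the cubic terms.} With $u=\sphih^n$, the cubic terms on the right of \cref{eq:exist1} read
$$-\elementprod{1}{(\sphih^n)^3}{\sphih^n-\sphih^{n-1}}.$$
Young's inequality gives $a^3b\le \frac34 a^4+\frac14 b^4$. Hence this quantity is at most $-\frac14\norm[1]{\sphih^n}^4_{L^4(\mathcal{T}_h)}+\frac14\norm[1]{\sphih^{n-1}}^4_{L^4(\mathcal{T}_h)}$. We drop the negative quartic term, and what remains is the source of the constant $C_2=\frac14$ in $B$.

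\textbf{Step 2: the remaining right-hand side terms.} These are handled as in the existence proof, but with the Young parameters chosen to match the stated constants.
\begin{itemize}
\item $-2(\spsih^{n-1},\sphih^n)\le \frac{4}{1-\varepsilon}\norm[1]{\spsih^{n-1}}^2+\frac{1-\varepsilon}{4}\norm[1]{\sphih^n}^2$.
\item $2(\spsih^{n-1},\sphih^{n-1})\le \norm[1]{\spsih^{n-1}}^2+\norm[1]{\sphih^{n-1}}^2$.
\item $(\spsih^{n-1},\spsih^n)\le \frac12\norm[1]{\spsih^{n-1}}^2+\frac12\norm[1]{\spsih^n}^2$.
\item $(1-\varepsilon)(\sphih^n,\sphih^{n-1})\le \frac{1-\varepsilon}{4}\norm[1]{\sphih^n}^2+(1-\varepsilon)\norm[1]{\sphih^{n-1}}^2$, or a similar split.
\item The $\tau_4$ face term is bounded by $\frac{\tau_4}{2}(\norm{\cdot}^2_n+\norm{\cdot}^2_{n-1})$.
\end{itemize}
We absorb the $n$-level terms into the left-hand side and discard the nonnegative mobility and $\tau_3$ terms. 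This should leave
$$\frac{1-\varepsilon}{4}\norm[1]{\sphih^n}^2\le B.$$
The bound \cref{eq:bound_sol} then follows.

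\textbf{Main obstacle.} The main difficulty is bookkeeping: the Young parameters must be balanced so that $\norm[1]{\sphih^n}^2$ keeps the coefficient $\frac{1-\varepsilon}{4}$ and the coefficients $C_1$, $C_3$ come out exactly as stated. If the constants do not match on the first attempt, I would re-split the $\norm[1]{\sphih^{n-1}}^2$ contributions, since any choice with $C_1\ge$ the sum obtained still proves a valid bound of the same form. Throughout, I would also check that the derivation of \cref{eq:exist1}, which uses the scheme at level $n-1$ and $\tau_2=\tau_1$, carries over unchanged when $u=\sphih^n$.
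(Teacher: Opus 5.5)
This is essentially the paper's proof. \cref{eq:exist1} with $u=\sphih^n$ is, after rearranging, exactly $\Delta t$ times the identity \cref{eq:bounds_final1}, which the paper rederives with $\testpsi=-\delta^n_{\Delta t}\sphih$, $\testmu=\smuh^n$, etc. Your pointwise bound $a^3b\le\frac34a^4+\frac14b^4$ does the work of \cref{eq:aux2}.

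Only the bookkeeping needs repair. Your listed weights leave $\frac{1-\varepsilon}{2}\|\sphih^n\|^2$ (not $\frac{1-\varepsilon}{4}$) bounded by a quantity in which $\|\spsih^{n-1}\|^2$ carries the coefficient $\frac{4}{1-\varepsilon}+\frac32$. Using the factor-two slack, this implies \cref{eq:bound_sol} only when $1-\varepsilon\le 8$. The mismatch is in $C_3$, not $C_1$, so re-splitting the $\|\sphih^{n-1}\|^2$ contributions does not help.

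Instead, take $2(\spsih^{n-1},\sphih^{n-1})\le\frac14\|\spsih^{n-1}\|^2+4\|\sphih^{n-1}\|^2$, $(\spsih^{n-1},\spsih^n)\le\frac14\|\spsih^{n-1}\|^2+\|\spsih^n\|^2$, and $(1-\varepsilon)(\sphih^n,\sphih^{n-1})\le\frac{1-\varepsilon}{2}\bigl(\|\sphih^n\|^2+\|\sphih^{n-1}\|^2\bigr)$. Combined with your bounds for the cubic, for $-2(\spsih^{n-1},\sphih^n)$ and for the $\tau_4$ term, these leave exactly $\frac{1-\varepsilon}{4}\|\sphih^n\|^2_{\mathcal{T}_h}\le B$ with the stated $C_1$, $C_2$, $C_3$.
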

\begin{proof}
	Take $\testpsi = -\delta_{\Delta t}^n\sphih$, $\testmu = \smuh^n$, $\testp = \bsh^n$, $\tests = -\bph^n$ in \cref{eq:discrete_form}, add up and simplify to obtain
	\begin{align}
		&\elementprod{1}{f(\sphih^n)}{\delta^n_{\Delta t}\sphih} + \norm[1]{M^{1/2}(\sphih^{n-1})\bph^n}^2_{\mathcal{T}_h} = -2\elementprod{1}{\spsih^{n-1}}{\delta^n_{\Delta t}\sphih} \nonumber
		\\
		&+ \elementprod{1}{\nabla\cdot\bqh^n}{\delta^n_{\Delta t}\sphih} 
		+ \faceprod{\tau_2(\spsih^n - \fpsih^n)}{\delta^n_{\Delta t}\sphih} - \faceprod{\tau_4(\sphih^n - \fphih^n)}{\delta^n_{\Delta t}\sphih} 
		\nonumber
		\\
		&- \faceprod{\tau_3(\smuh^n-\fmuh^n)}{\smuh^n} - \faceprod{\fmuh^n}{\bsh^n\cdot\boldsymbol{n}},
		\label{eq:bounds_aux1}
	\end{align}
	where we have used that $M(\phi) \geq 0$.
	Taking $\testr = -\bqh^n$ in \cref{eq:deltar_disc_n1} and adding it to \cref{eq:bounds_aux1}, we obtain
	\begin{align}
		&\elementprod{1}{f(\sphih^n)}{\delta^n_{\Delta t}\sphih} + \norm[1]{M^{1/2}(\sphih^{n-1})\bph^n}^2_{\mathcal{T}_h} - \elementprod{1}{\delta^n_{\Delta t}\brh}{\bqh^n}= 
		\nonumber
		\\
		&-2\elementprod{1}{\spsih^{n-1}}{\delta^n_{\Delta t}\sphih} 
		+ \faceprod{\delta^n_{\Delta t}\fphih}{\bqh^n\cdot\boldsymbol{n}} 
		+ \faceprod{\tau_2(\spsih^n - \fpsih^n)}{\delta^n_{\Delta t}\sphih} \nonumber
		\\
		&- \faceprod{\tau_4(\sphih^n - \fphih^n)}{\delta^n_{\Delta t}\sphih} 
		- \faceprod{\tau_3(\smuh^n-\fmuh^n)}{\smuh^n} - \faceprod{\fmuh^n}{\bsh^n\cdot\boldsymbol{n}}.
		\label{eq:bounds_aux2}
	\end{align}
	From \cref{eq:q_disc} we can obtain
	\begin{equation}
		-\elementprod{1}{\bqh^n}{\delta^n_{\Delta t}\brh} = - \elementprod{1}{\spsih^n}{\nabla\cdot\delta^n_{\Delta t}\brh} + \faceprod{\fpsih^n}{\delta^n_{\Delta t}\brh\cdot\boldsymbol{n}}.
		\label{eq:bounds_auxq1}
	\end{equation}
	Moreover, from \cref{eq:psi_disc} we have
	\begin{equation}
		-\elementprod{1}{\nabla\cdot\delta^n_{\Delta t}\brh}{\spsih^n} =  \elementprod{1}{\delta^n_{\Delta t}\spsih}{\spsih^n} + \faceprod{\tau_1(\delta^n_{\Delta t}\sphih - \delta^n_{\Delta t}\fphih)}{\spsih^n}.
	\end{equation}
	Using this in the right hand side of \cref{eq:bounds_auxq1} and substituting in  \cref{eq:bounds_aux2}, we have
	\begin{align}
		&\elementprod{1}{f(\sphih^n)}{\delta^n_{\Delta t}\sphih} + \norm[1]{M^{1/2}(\sphih^{n-1})\bph^n}^2_{\mathcal{T}_h} + \elementprod{1}{\delta^n_{\Delta t}\spsih}{\spsih^n} 
		\nonumber
		\\
		&= -2\elementprod{1}{\spsih^{n-1}}{\delta^n_{\Delta t}\sphih} 
		+ \faceprod{\delta^n_{\Delta t}\fphih}{\bqh^n\cdot\boldsymbol{n}} 
		+ \faceprod{\tau_2(\spsih^n - \fpsih^n)}{\delta^n_{\Delta t}\sphih} \nonumber
		\\
		&- \faceprod{\tau_4(\sphih^n - \fphih^n)}{\delta^n_{\Delta t}\sphih} - \faceprod{\tau_3(\smuh^n-\fmuh^n)}{\smuh^n} - \faceprod{\fmuh^n}{\bsh^n\cdot\boldsymbol{n}}
		\nonumber
		\\
		&- \faceprod{\tau_1(\delta^n_{\Delta t}\sphih - \delta^n_{\Delta t}\fphih)}{\spsih^n} - \faceprod{\fpsih^n}{\delta^n_{\Delta t}\brh\cdot\boldsymbol{n}}.
		\label{eq:bounds_aux3}
	\end{align}
	Taking $\testfmu = \fmuh^n$ in \cref{eq:shat_disc}, and using this in \cref{eq:bounds_aux3}, we have
	\begin{align}
		&\elementprod{1}{f(\sphih^n)}{\delta^n_{\Delta t}\sphih} + \norm[1]{M^{1/2}(\sphih^{n-1})\bph^n}^2_{\mathcal{T}_h} + \elementprod{1}{\delta^n_{\Delta t}\spsih}{\spsih^n} 
		\nonumber
		\\
		&+ \faceprod{\tau_3(\smuh^n - \fmuh^n)}{\smuh^n - \fmuh^n}
		= -2\elementprod{1}{\spsih^{n-1}}{\delta^n_{\Delta t}\sphih} + \faceprod{\delta^n_{\Delta t}\fphih}{\bqh^n\cdot\boldsymbol{n}} 
		\nonumber
		\\
		&+ \faceprod{\tau_2(\spsih^n - \fpsih^n)}{\delta^n_{\Delta t}\sphih} 
		- \faceprod{\tau_4(\sphih^n - \fphih^n)}{\delta^n_{\Delta t}\sphih} \nonumber
		\\
		&- \faceprod{\tau_1(\delta^n_{\Delta t}\sphih - \delta^n_{\Delta t}\fphih)}{\spsih^n} 
		- \faceprod{\fpsih^n}{\delta^n_{\Delta t}\brh\cdot\boldsymbol{n}}.
		\label{eq:bounds_aux4}
	\end{align}
	From \cref{eq:rhat_disc}, we obtain
	\begin{equation}
		-\faceprod{\delta^n_{\Delta t}\brh\cdot\boldsymbol{n}}{\fpsih^n} = \faceprod{\tau_1(\delta^n_{\Delta t}\sphih - \delta^n_{\Delta t}\fphih)}{\fpsih^n}.
	\end{equation}
	Therefore, \cref{eq:bounds_aux4} becomes
	\begin{align}
		&\elementprod{1}{f(\sphih^n)}{\delta^n_{\Delta t}\sphih} + \norm[1]{M^{1/2}(\sphih^{n-1})\bph^n}^2_{\mathcal{T}_h} + \elementprod{1}{\delta^n_{\Delta t}\spsih}{\spsih^n} 
		\nonumber
		\\
		&+ \faceprod{\tau_3(\smuh^n - \fmuh^n)}{\smuh^n - \fmuh^n}
		= -2\elementprod{1}{\spsih^{n-1}}{\delta^n_{\Delta t}\sphih} + \faceprod{\delta^n_{\Delta t}\fphih}{\bqh^n\cdot\boldsymbol{n}} 
		\nonumber
		\\
		&+ \faceprod{\tau_2(\spsih^n - \fpsih^n)}{\delta^n_{\Delta t}\sphih} 
		- \faceprod{\tau_4(\sphih^n - \fphih^n)}{\delta^n_{\Delta t}\sphih} \nonumber
		\\
		&- \faceprod{\tau_1(\delta^n_{\Delta t}\sphih - \delta^n_{\Delta t}\fphih)}{\spsih^n - \fpsih^n}.
	\end{align}
	Since $\tau_2 = \tau_1$, we have
	\begin{align}
		&\elementprod{1}{f(\sphih^n)}{\delta^n_{\Delta t}\sphih} + \norm[1]{M^{1/2}(\sphih^{n-1})\bph^n}^2_{\mathcal{T}_h} + \elementprod{1}{\delta^n_{\Delta t}\spsih}{\spsih^n} 
		\nonumber
		\\
		&+ \faceprod{\tau_3(\smuh^n - \fmuh^n)}{\smuh^n - \fmuh^n}
		= -2\elementprod{1}{\spsih^{n-1}}{\delta^n_{\Delta t}\sphih} + \faceprod{\delta^n_{\Delta t}\fphih}{\bqh^n\cdot\boldsymbol{n} \nonumber
			\\
			&+ \tau_2(\spsih^n - \fpsih^n)} 
		- \faceprod{\tau_4(\sphih^n - \fphih^n)}{\delta^n_{\Delta t}\sphih}.
		\label{eq:bounds_aux5}
	\end{align}
	From \cref{eq:qhat_disc} taking $\testfpsi = \delta_{\Delta t}^n\fphih$, we have
	\begin{equation}
		\faceprod{\bqh^n\cdot \boldsymbol{n} + \tau_2(\spsih^n - \fpsih^n)}{\delta_{\Delta t}^n\fphih} = \faceprod{\tau_4(\sphih^n - \fphih^n)}{\delta_{\Delta t}^n\fphih}.
	\end{equation}
	Thus, \cref{eq:bounds_aux5} becomes
	\begin{align}
		&\elementprod{1}{f(\sphih^n)}{\delta^n_{\Delta t}\sphih} + \norm[1]{M^{1/2}(\sphih^{n-1})\bph^n}^2_{\mathcal{T}_h} + \elementprod{1}{\delta^n_{\Delta t}\spsih}{\spsih^n} 
		\nonumber
		\\
		&+ \faceprod{\tau_3(\smuh^n - \fmuh^n)}{\smuh^n - \fmuh^n}
		+ \faceprod{\tau_4(\sphih^n - \fphih^n)}{\delta^n_{\Delta t}\sphih - \delta^n_{\Delta t}\fphih} =
		\nonumber
		\\
		&-2\elementprod{1}{\spsih^{n-1}}{\delta^n_{\Delta t}\sphih}.
		\label{eq:bounds_aux6}
	\end{align}
	Finally, note that
	\begin{multline}
		\faceprod{\tau_4(\sphih^n - \fphih^n)}{\delta^n_{\Delta t}\sphih - \delta^n_{\Delta t}\fphih} = \Delta t^{-1}\faceprod{\tau_4(\sphih^n - \fphih^n)}{\sphih^n - \fphih^n} 
		\nonumber
		\\
		- \Delta t^{-1}\faceprod{\tau_4(\sphih^n - \fphih^n)}{\sphih^{n-1} - \fphih^{n-1}}.
	\end{multline}
	Therefore, 
	\begin{align}
		&\elementprod{1}{f(\sphih^n)}{\delta^n_{\Delta t}\sphih} + \norm[1]{M^{1/2}(\sphih^{n-1})\bph^n}^2_{\mathcal{T}_h} + \elementprod{1}{\delta^n_{\Delta t}\spsih}{\spsih^n} 
		\nonumber
		\\
		&+ \faceprod{\tau_3(\smuh^n - \fmuh^n)}{\smuh^n - \fmuh^n}
		+ \Delta t^{-1}\faceprod{\tau_4(\sphih^n - \fphih^n)}{\sphih^n - \fphih^n} = \nonumber
		\\
		&-2\elementprod{1}{\spsih^{n-1}}{\delta^n_{\Delta t}\sphih} 
		+ \Delta t^{-1}\faceprod{\tau_4(\sphih^n - \fphih^n)}{\sphih^{n-1} - \fphih^{n-1}}.
		\label{eq:bounds_final1}
	\end{align}
	Using \cref{eq:energ_psi_aux,eq:aux3} and \cref{eq:aux2} in \cref{eq:bounds_final1} gives us the following bound
	\begin{align}
		&\frac{1}{4\Delta t}\norm[1]{\sphih^n}_{L^4(\Omega)}^4 + \frac{1-\varepsilon}{2\Delta t}\norm[1]{\sphih^n}_{\mathcal{T}_h}^2 + \frac{1}{2\Delta t}\norm[1]{\spsih^n}^2_{\mathcal{T}_h} + \norm[1]{M^{1/2}(\sphih^{n-1})\bph^n}^2_{\mathcal{T}_h} \nonumber
		\\
		&+ \tau_3\norm[1]{\smuh^n - \fmuh^n}^2_{\partial\mathcal{T}_h}
		+ \frac{\tau_4}{\Delta t}\norm[1]{\sphih^n - \fphih^n}^2_{\partial\mathcal{T}_h} 
		\leq -\frac{2}{\Delta t}\elementprod{1}{\spsih^{n-1}}{\sphih^n} \nonumber
		\\
		&+ \frac{2}{\Delta t}\elementprod{1}{\spsih^n}{\sphih^{n-1}} 
		+ \Delta t^{-1}\faceprod{\tau_4(\sphih^n - \fphih^n)}{\sphih^{n-1} - \fphih^{n-1}} 
		\nonumber
		\\
		&+ \frac{1}{4\Delta t}\norm[1]{\sphih^{n-1}}_{L^4(\Omega)}^4 
		+ \frac{1-\varepsilon}{2\Delta t}\norm[1]{\sphih^{n-1}}_{\mathcal{T}_h}^2 + \frac{1}{2\Delta t}\norm[1]{\spsih^{n-1}}^2_{\mathcal{T}_h}.
	\end{align}
	Using Cauchy--Schwarz and Young's inequalities on the right hand side, we have
	\begin{align}
		&\frac{1}{4\Delta t}\norm[1]{\sphih^n}_{L^4(\Omega)}^4 + \frac{1-\varepsilon}{2\Delta t}\norm[1]{\sphih^n}_{\mathcal{T}_h}^2 + \frac{1}{2\Delta t}\norm[1]{\spsih^n}^2_{\mathcal{T}_h} + \norm[1]{M^{1/2}(\sphih^{n-1})\bph^n}^2_{\mathcal{T}_h} \nonumber
		\\
		&+ \tau_3\norm[1]{\smuh^n - \fmuh^n}^2_{\partial\mathcal{T}_h}
		+ \frac{\tau_4}{\Delta t}\norm[1]{\sphih^n - \fphih^n}^2_{\partial\mathcal{T}_h} 
		\leq 
		\frac{1}{\gamma_1\Delta t}\norm[1]{\spsih^{n-1}}_{\mathcal{T}_h}^2 + \frac{\gamma_1}{\Delta t}\norm[1]{\sphih^n}_{\mathcal{T}_h}^2
		\nonumber
		\\
		&+ \frac{\gamma_2}{\Delta t}\norm[1]{\spsih^n}^2_{\mathcal{T}_h} + \frac{1}{\gamma_2\Delta t}\norm[1]{\sphih^{n-1}}^2_{\mathcal{T}_h} 
		+ \frac{\tau_4}{2\gamma_3\Delta t}\norm[1]{\sphih^{n-1} - \fphih^{n-1}}^2_{\partial\mathcal{T}_h} \nonumber
		\\
		&+ \frac{\gamma_3\tau_4}{2\Delta t}\norm[1]{\sphih^n - \fphih^n}^2_{\partial\mathcal{T}_h}
		+ \frac{1}{4\Delta t}\norm[1]{\sphih^{n-1}}_{L^4(\Omega)}^4 + \frac{1-\varepsilon}{2\Delta t}\norm[1]{\sphih^{n-1}}_{\mathcal{T}_h}^2 + \frac{1}{2\Delta t}\norm[1]{\spsih^{n-1}}^2_{\mathcal{T}_h},
	\end{align}
	where we have again used that $M(\phi) \geq 0$. Taking $\gamma_1 = (1-\varepsilon)/4$, $\gamma_2 = 1/4$ and $\gamma_3 = 1$, gives the final result.
\end{proof}

Next, we show uniqueness of the solution.
\begin{theorem}[Uniqueness]
	If $\tau_1,\tau_2,\tau_3,\tau_4 >0$ and $\tau_2 = \tau_1$, the solution to \cref{eq:discrete_form} is unique.  
\end{theorem}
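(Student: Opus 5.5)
We argue by contradiction-free subtraction. Suppose two solutions of \cref{eq:discrete_form} exist for the same data $\sphih^{n-1},\spsih^{n-1}$; call them with subscripts $1$ and $2$ and denote their differences by $\br^{d}=\brh^n_1-\brh^n_2$, and similarly $\phi^d,\psi^d,\mu^d,\bar\phi^d$, etc. Since $M(\sphih^{n-1})$ and $\spsih^{n-1}$ are frozen data, the difference satisfies the linear system \cref{eq:linear_discrete_form} with zero right-hand side, except that the cubic source becomes $\elementprod{1}{(\sphih^n_1)^3-(\sphih^n_2)^3}{\testpsi}$. Using exactly the test functions that produced \cref{eq:relation_linear}, namely $\testr=-\bq^d$, $\testphi=\psi^d$, $\testq=\br^d$, $\testpsi=-\phi^d$, $\testp=\Delta t\,\bs^d$, $\tests=-\Delta t\,\bp^d$, $\testmu=\Delta t\,\mu^d$, $\testfphi=-\bar\psi^d$, $\testfpsi=\bar\phi^d$, $\testfmu=-\Delta t\,\bar\mu^d$ (this is where $\tau_2=\tau_1$ is used, to cancel the face terms), we obtain
\begin{multline*}
\norm[1]{\psi^d}^2_{\mathcal{T}_h}+(1-\varepsilon)\norm[1]{\phi^d}^2_{\mathcal{T}_h}+\Delta t\norm[1]{M^{1/2}(\sphih^{n-1})\bp^d}^2_{\mathcal{T}_h}+\tau_3\Delta t\norm[1]{\mu^d-\bar\mu^d}^2_{\partial\mathcal{T}_h}
\\
+\tau_4\norm[1]{\phi^d-\bar\phi^d}^2_{\partial\mathcal{T}_h}=-\elementprod{1}{(\sphih^n_1)^3-(\sphih^n_2)^3}{\phi^d}.
\end{multline*}

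The key observation is that the right-hand side has a sign: writing $a^3-b^3=(a-b)(a^2+ab+b^2)$ with $a^2+ab+b^2=\tfrac12(a^2+b^2+(a+b)^2)\ge 0$, we get $\elementprod{1}{(\sphih^n_1)^3-(\sphih^n_2)^3}{\phi^d}\ge0$ pointwise, i.e. the map $s\mapsto s^3$ is monotone. Hence the right-hand side is $\le 0$, and since $1-\varepsilon>0$ and all other terms are nonnegative, every term on the left vanishes: $\phi^d=\psi^d=0$, $\bar\phi^d=0$ on faces, $\mu^d=\bar\mu^d$ on faces, $M(\sphih^{n-1})\bp^d=\boldsymbol 0$. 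If monotonicity were not directly usable, the fallback would be the bound of Corollary~\ref{corollary-4.5} together with \cref{lem:equiv_lp_lq} to control $\norm{a^2+ab+b^2}_{L^\infty}$ under a small-data condition, but the monotone argument avoids this entirely and gives unconditional uniqueness.

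Once $\phi^d$ vanishes, the difference system is precisely the homogeneous linear problem, and I repeat the cascade from the well-posedness part of Lemma~\ref{lem:wellposed_cont}: \cref{eq:s_disc_linear} with $\tests=\bs^d$ gives $\bs^d=\boldsymbol 0$; \cref{eq:q_disc_linear} and \cref{eq:qhat_disc_linear} with $\testq=\bq^d$, $\testfpsi=-\bar\psi^d$ give $\bq^d=\boldsymbol 0$, $\bar\psi^d=0$; then \cref{eq:mu_disc_linear} with $\testpsi=\mu^d$ gives $\mu^d=0$, so $\bar\mu^d=0$; \cref{eq:r_disc_linear} yields $\br^d=\boldsymbol 0$ and \cref{eq:p_disc_linear} yields $\bp^d=\boldsymbol 0$. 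The only delicate step is the energy identity for the difference (checking that the nonlinear term enters only through $\testpsi=-\phi^d$ and that the face terms cancel with $\tau_2=\tau_1$); the rest is routine.
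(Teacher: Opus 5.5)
Your proposal is correct and follows essentially the same route as the paper. You subtract the two solutions, apply the test-function combination behind \cref{eq:relation_linear} (where $\tau_2=\tau_1$ cancels the cross face terms), and use $a^2+ab+b^2=\tfrac12(a^2+b^2+(a+b)^2)\ge 0$ to make the cubic term nonpositive. You then run the same cascade as in the well-posedness part of Lemma~\ref{lem:wellposed_cont}, and you are slightly more careful than the paper: you take only $M(\sphih^{n-1})\bp^d=\boldsymbol 0$ from the energy identity and recover $\bp^d=\boldsymbol 0$ at the end from \cref{eq:p_disc_linear}, while the fallback via Corollary~\ref{corollary-4.5} is not needed.
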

\begin{proof}
	For a given $\sphih^{n-1}, \spsih^{n-1} \in W_h,$
	let $(\br_i^n,\bq_i^n,\bp_i^n, \allowbreak \bs_i^n,\sphi_i^n,\spsi_i^n,\smu_i^n,\fphi_i^n,\fpsi_i^n,\fmu_i^n)$, $i=1,2 $ be two solutions to~\cref{eq:discrete_form} and let us denote their difference by $(\br^{1,2},\bq^{1,2},\bp^{1,2},\allowbreak \bs^{1,2}, \sphi^{1,2}, \spsi^{1,2}, \allowbreak\smu^{1,2}, \fphi^{1,2}, \fpsi^{1,2}, \fmu^{1,2})$, i.e., $\br^{1,2} = \br_1^n - \br_2^n$, and similar for the other variables. It is easy to see that the difference between two solutions satisfies the following system:
	\begin{subequations}
		\begin{align*}
			\elementprod{1}{\br^{1,2}}{\testr} - \elementprod{1}{\sphi^{1,2}}{\nabla \cdot \testr} + \faceprod{\fphi^{1,2}}{\testr \cdot \boldsymbol{n}} &= 0,
			\\
			\elementprod{1}{\nabla \cdot \br^{1,2}}{\testphi} +  \faceprod{\tau_1(\sphi^{1,2} - \fphi^{1,2})}{\testphi} + \elementprod{1}{\spsi^{1,2}}{\testphi} &= 0,
			\\
			\elementprod{1}{\bq^{1,2}}{\testq} - \elementprod{1}{\spsi^{1,2}}{\nabla\cdot \testq} + \faceprod{\fpsi^{1,2}}{\testq \cdot \boldsymbol{n}} &= 0,
			\\
			\elementprod{1}{\nabla \cdot \bq^{1,2}}{\testpsi} + \faceprod{\tau_2(\spsi^{1,2} - \fpsi^{1,2})}{\testpsi} &
			\nonumber
			\\
			+ \elementprod{1}{\smu^{1,2}}{\testpsi} - (1-\varepsilon)\elementprod{1}{\sphi^{1,2}}{\testpsi} -\faceprod{\tau_4(\sphi^{1,2} - \fphi^{1,2})}{\testpsi} &= \elementprod{1}{(\sphi_1^n)^3 - (\sphi_2^n)^3}{\testpsi},
			\\
			\elementprod{1}{\bp^{1,2}}{\testp} - \elementprod{1}{\smu^{1,2}}{\nabla \cdot \testp} + \faceprod{\fmu^{1,2}}{\testp \cdot \boldsymbol{n}} &= 0,
			\\
			\elementprod{1}{\bs^{1,2}}{\tests} - \elementprod{1}{M(\sphih^{n-1})\bp^{1,2}}{\tests} &= 0,
			\\
			\elementprod{1}{\nabla \cdot \bs^{1,2}}{\testmu} + \faceprod{\tau_3(\smu^{1,2} - \fmu^{1,2})}{\testmu} + \Delta t^{-1}\elementprod{1}{\sphi^{1,2}}{\testmu} &= 0,
			\\
			\faceprod{\br^{1,2} \cdot \boldsymbol{n} + \tau_1(\sphi^{1,2} - \fphi^{1,2})}{\testfphi} &= 0,
			\\
			\faceprod{\bq^{1,2} \cdot \boldsymbol{n} + \tau_2(\spsi^{1,2} - \fpsi^{1,2}) - \tau_4(\sphi^{1,2} - \fphi^{1,2})}{\testfpsi} &= 0,
			\\
			\faceprod{\bs^{1,2} \cdot \boldsymbol{n} + \tau_3(\smu^{1,2} - \fmu^{1,2})}{\testfmu} &= 0.
		\end{align*}
		\label{eq:discrete_form_uniq}
	\end{subequations}
	for all $(\testr,\testq,\testp,\tests,\testphi,\testpsi,\testmu,\testfphi,\testfpsi,\testfmu) \in \boldsymbol{V}_h^4\times W_h^3\times M_h^3$.
	The relation \cref{eq:relation_linear} gives
	\begin{align}
		\norm[1]{\spsi^{1,2}}^2_{\mathcal{T}_h} + (1-\varepsilon)\norm[1]{\sphi^{1,2}}^2_{\mathcal{T}_h} + \Delta t\norm[1]{M^{1/2}(\sphih^{n-1}) \bp^{1,2}}^2_{\mathcal{T}_h} + \tau_3 \Delta t \norm[1]{\smu^{1,2} - \fmu^{1,2}}^2_{\partial\mathcal{T}_h} 
		\nonumber
		\\
		+ \tau_4 \norm[1]{\sphi^{1,2} - \fphi^{1,2}}^2_{\partial\mathcal{T}_h} = -\elementprod{1}{(\sphi_1^n)^3 - (\sphi_2^n)^3}{\sphi^{1,2}}.
		\label{eq:aux_uniq1}
	\end{align}
	Note that, since $\phi^{1,2} = \sphi_1^n - \sphi_2^n$, we have
	\begin{align}
		-\elementprod{1}{(\phi_1^n)^3-(\phi_2^n)^3}{\sphi^{1,2}}
		&= -\frac{1}{2}\elementprod{1}{(\phi_1^n+\phi_2^n)^2\sphi^{1,2}
			+ ((\phi_1^n)^2+(\phi_2^n)^2)\sphi^{1,2}  
		}{\sphi^{1,2}}
		\notag
		\\
		&= -\frac{1}{2}\elementprod{1}{(\phi_1^n+\phi_2^n)^2
			+
			((\phi_1^n)^2+(\phi_2^n)^2)  
		}{(\sphi^{1,2})^2}
		\notag
		\\ 
		&\leq 0.
		\label{eq:uni-proof02}
	\end{align}
	Thus, \cref{eq:aux_uniq1} becomes
	\begin{align}
		\norm[1]{\spsi^{1,2}}^2_{\mathcal{T}_h} + (1-\varepsilon)\norm[1]{\sphi^{1,2}}^2_{\mathcal{T}_h} + \Delta t\norm[1]{M^{1/2}(\sphih^{n-1}) \bp^{1,2}}^2_{\mathcal{T}_h} 
		\nonumber
		\\
		+ \tau_3 \Delta t \norm[1]{\smu^{1,2} - \fmu^{1,2}}^2_{\partial\mathcal{T}_h} 
		+ \tau_4 \norm[1]{\sphi^{1,2} - \fphi^{1,2}}^2_{\partial\mathcal{T}_h} \leq 0.
		\label{eq:aux_uniq2}
	\end{align}
	Since $\varepsilon < 1$, $\tau_4, \tau_3 > 0$, and $\Delta t > 0$, we then have
	\begin{equation*}
		\sphi^{1,2} = \spsi^{1,2} = 0 \text{ on }\mathcal{T}_h, \quad \fphi^{1,2} = 0 \text{ on }\mathcal{E}_h, \quad \fmu^{1,2} = \smu^{1,2} \text{ on }\mathcal{E}_h, \quad \bp^{1,2} = \boldsymbol{0} \text{ on }\mathcal{T}_h.
	\end{equation*}
	As in the proof of well-posedness in \cref{lem:wellposed_cont}, we can conclude that all the other variables are zero, and therefore the solution is unique.
\end{proof}
\section{Numerical results}\label{sec:numerical_results}

In this section, we present several test cases to illustrate the properties of the method that we have shown in the previous sections. Since the PFC model operates at the atomic length and diffusive length scales, it is known to effectively simulate the formation and evolution of crystalline structures, as well as the ensuing defects and grain boundaries in crystalline materials. 
We demonstrate the proposed method's ability in capturing these behaviors in the test examples presented in ~\cref{ss:crystal_formation,ss:benchmark,ss:phase_transformation,ss:grain_grows}.
We have implemented the discretization in the Modular Finite Element Method (MFEM) library \cite{mfem}. The linear systems are solved using MUMPS \cite{MUMPS2001,MUMPS2006} through PETSc \cite{balay_petsctao_2023,balay_efficient_1997}.

We used Newton linearization to handle the nonlinearity of \eqref{eq:discrete_form}. We used the solution from the previous time step (and the initial condition at the first step) as an initial guess. The stopping criterion for the iterations was that the Euclidean norm of the residual vector obtained from the \eqref{eq:discrete_form} had to be below $10^{-8}$ in all simulations.

We achieve the solution of the linearized system in two steps: first, we solve the static condensed system to obtain a Newton update on the skeletal unknowns; second, we perform an element-wise reconstruction to obtain the Newton update for the volume unknowns. This is the standard approach for HDG and EDG methods; see e.g. \cite{horvath:2020,Nguyen:2009}.

Unless stated otherwise, we use the stability parameters as $\tau_1 = \tau_2 = \tau_3 = \frac12\tau_4 = 10$. We used the non-degenerate case, $M = 1$, for most examples.

\subsection{Accuracy of the scheme}\label{ss:accuracy}

First, we test the rates of convergence of our method with a manufactured solution. We consider the domain $\Omega = (0,2\pi)^2$, and periodic boundary conditions. Moreover, we add a source term to \cref{eq:new_variables}, and set the initial conditions such that the exact solution of the problem is
\begin{equation*}
	\phi(x,y,t) = e^{-2t}\sin(x)\sin(y).
\end{equation*}

We take $\varepsilon = 0.5$ and the final time $T=1$ for these simulations. The $L^2(\Omega)$ errors at the final time level are shown for different $\Delta t$ and $h$ values in \cref{tab:periodic} for the non-degenerate case, $M = 1$, and in \cref{tab:periodic_degen} for the degenerate case, $M = 1-\sphi^2$. We observe a rate of convergence of 1, as expected from the first-order time discretization. \Cref{tab:periodic} and \cref{tab:periodic_degen} show the results for EDG discretization. However, the errors for the HDG and the EDG methods are identical up to two meaningful decimal places.

\Cref{fig:error_vs_dofs} shows the error values from \cref{tab:periodic} for the case of $\Delta t / h = 0.95$ as a function of the number of degrees of freedom, both for the EDG and the HDG discretizations. For the convergence tests, we use Cartesian quadrilateral grids, and we observe that the number of degrees of freedom for the HDG discretization matches that of the EDG discretization after one additional refinement. So, the HDG discretization requires approximately four times as many unknowns to achieve the same error as the EDG one. This shows the superiority of the EDG discretization for these problems.

\begin{table}[!ht]
	\centering
	\begin{tabular}{c|c|c||c|c||c|c}
		N   &  $\|\sphi - \sphih\|_{\mathcal{T}_h}$   & Rate &  $\|\sphi - \sphih\|_{\mathcal{T}_h}$   & Rate &  $\|\sphi - \sphih\|_{\mathcal{T}_h}$   & Rate \\
		48  & 1.20E+00 &   -   & 5.66E-01 &    -  & 2.74E-01 &    -  \\
		96  & 8.74E-01 & 0.45 & 3.33E-01 & 0.76 & 1.48E-01 & 0.88 \\
		192 & 5.68E-01 & 0.62 & 1.82E-01 & 0.87 & 7.71E-02 & 0.94 \\
		384 & 3.33E-01 & 0.77 & 9.56E-02 & 0.93 & 3.93E-02 & 0.97 \\
		768 & 1.82E-01 & 0.87 & 4.89E-02 & 0.97 & 1.98E-02 & 0.99 \\
	\end{tabular}
	\caption{Non-degenerate case ($M = 1$). $L^2(\Omega)$ errors at the final time level. Left: $\Delta t / h = 3.82$, middle: $\Delta t / h = 0.95$, right: $\Delta t / h= 0.382$.}
	\label{tab:periodic}
\end{table}

\begin{table}[!ht]
	\centering
	\begin{tabular}{c|c|c||c|c||c|c}
		N   &  $\|\sphi - \sphih\|_{\mathcal{T}_h}$   & Rate &  $\|\sphi - \sphih\|_{\mathcal{T}_h}$   & Rate &  $\|\sphi - \sphih\|_{\mathcal{T}_h}$   & Rate \\
		48  & 1.22E+00 &   -   & 5.88E-01 &     - & 2.87E-01 &   -  \\
		96  & 8.99E-01 & 0.44 & 3.49E-01 & 0.75 & 1.57E-01 & 0.87\\
		192 & 5.90E-01 & 0.61 & 1.93E-01 & 0.85 & 8.20E-02 & 0.94\\
		384 & 3.50E-01 & 0.75 & 1.01E-01 & 0.93 & 4.18E-02 & 0.97\\
		768 & 1.93E-01 & 0.86 & 5.20E-02 & 0.96 & 2.12E-02 & 0.98
	\end{tabular}
	\caption{Degenerate case ($M = 1-\sphi^2$). $L^2(\Omega)$ errors at the final time level. Left: $\Delta t / h = 3.82$, middle: $\Delta t / h = 0.95$, right: $\Delta t / h = 0.382$.}
	\label{tab:periodic_degen}
\end{table}

\begin{figure}[!h!t]
	\centering
	\includegraphics[width=0.8\textwidth]{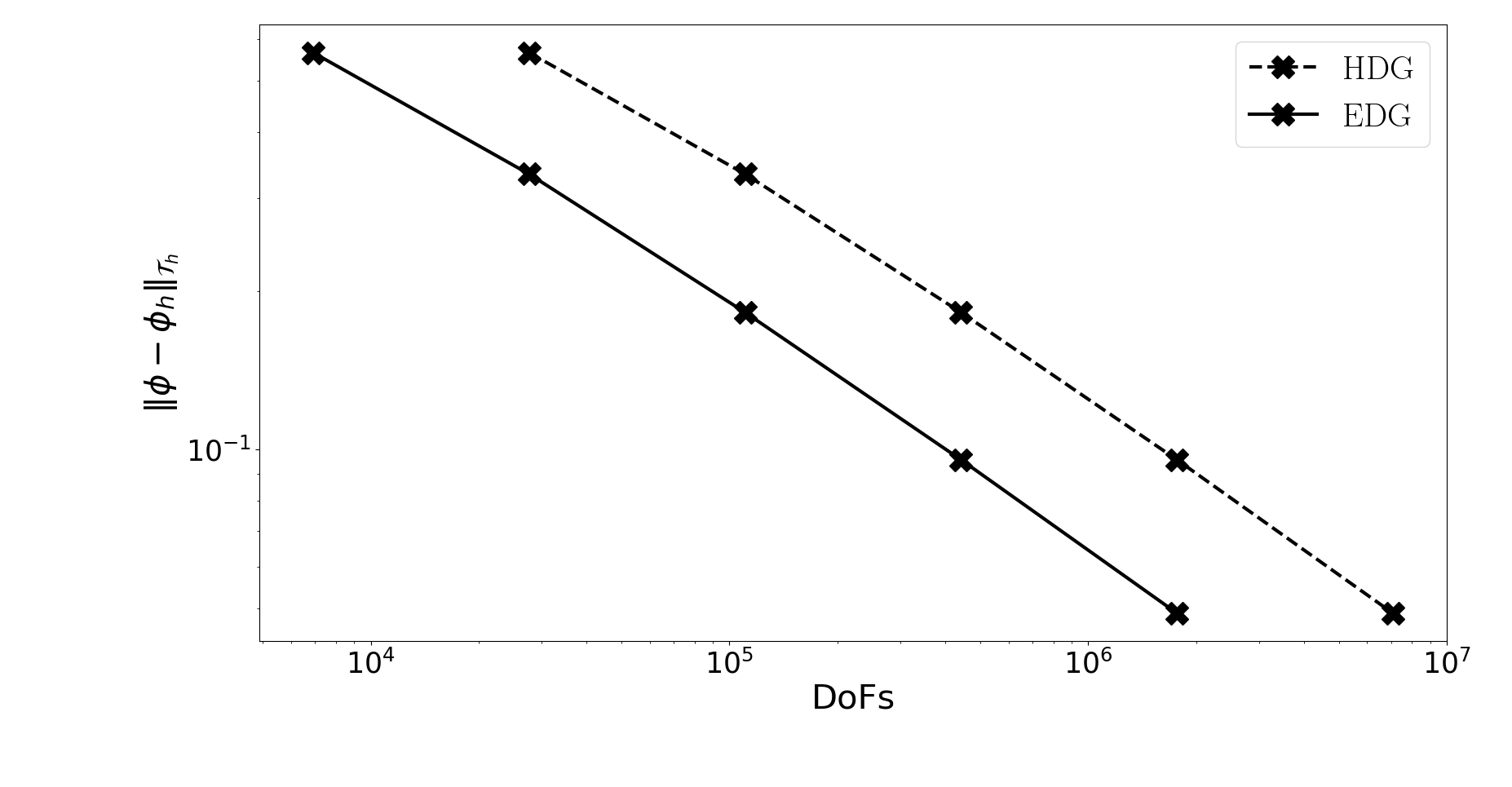}
	\caption{The error as a function of the number of degrees of freedom. The error values are taken from \cref{tab:periodic}, for $\Delta t / h = 0.95$. Continuous line EDG, dashed line HDG discretization.}
	\label{fig:error_vs_dofs}
\end{figure}

\subsection{Growth of a Monocrystal in a supercooled liquid}\label{ss:crystal_formation}
In this example, we demonstrate the ability of the method in capturing the liquid-to-solid phase transition through the growth of a single crystal (solid) in a supercooled liquid. Here, $\phi(x,y,t)$ represents a local atomistic density field that describes this two-phase system. It is assumed to have a uniform value $\phi_a$ in the liquid phase while exhibiting the crystal lattice's symmetry and periodicity in the spatially dependent solid phase.  To initiate nucleation, we place a single seed crystal with spatially varying density $\phi_S(x,y)$ at the center $(x_0,y_0)$ of our computational domain $\Omega$. The computational domain is assumed to be occupied by a supercooled liquid with homogeneous density denoted by a constant $\phi_a$ as described in~\eqref{def:monocrystal}. We present the numerical performance of this example primarily to benchmark against the previously published simulations~\cite{calo2020splitting,vignal2017energy} and the references therein. To the best of the authors' knowledge, specific details of the type of underlying crystalline material are unavailable in the literature.
The computational domain is $\Omega = \left(0, \frac{36\pi}{\sqrt{3}}\right) \times \left(0, 24\pi\right)$ with periodic boundary conditions. The initial condition is given by
\begin{equation}\label{def:monocrystal}
	\sphi(x,y,0) = \sphi_{a} + \omega(x,y)A\sphi_S(x,y),
\end{equation}
where 
\begin{align*}
	\sphi_S(x,y) &= \cos\left(\frac{qy}{\sqrt{3}}\right)\cos\left(qx\right) - 0.5 \cos\left(\frac{2qy}{\sqrt{3}}\right),\\
	\omega(x,y) &=\begin{cases}
		\left(1-\left(\displaystyle \frac{\|(x,y)-(x_0,y_0)\|}{d_0}\right)^2\right)^2 \quad & \mbox{if\,\,} \|(x,y)-(x_0,y_0)\|\le d_0,\\
		0 \quad &\mbox{otherwise.}
	\end{cases}
\end{align*}
The parameters are
\begin{equation*}
	A = \frac{4}{5}\left(\sphi_{a} + \frac{\sqrt{15\varepsilon - 36\bar{\sphi}^2}}{3}\right),
	\quad \varepsilon = 0.325, \quad  \sphi_{a} = \frac{\sqrt{\varepsilon}}{2}, \quad q=\frac{\sqrt{3}}{2},
\end{equation*}
while $(x_0,y_0)$ is the center of the domain, and $d_0$ is 1/6 of the domain length in the $x$-direction. The crystal growth is shown in \cref{fig:Chrystal_growth} for different times, using a $460 \times 532$ mesh, with time step size $\Delta t = 0.01$. We can see the development of the crystal growth at different times. The results match those found in the literature; see \cite{calo2020splitting,vignal2017energy} and the references therein.
In~\cref{fig:Chrystal_growth}, we observe the evolution of the conserved dynamics of transitioning from a liquid to a solid phase in the presence of a seed crystal placed in the center of the domain. We note that our method effectively captures this phase transition and the ensuing periodic lattice structure inherited by the crystal phase.
\begin{figure}
	\centering
	\includegraphics[width=0.3\textwidth]{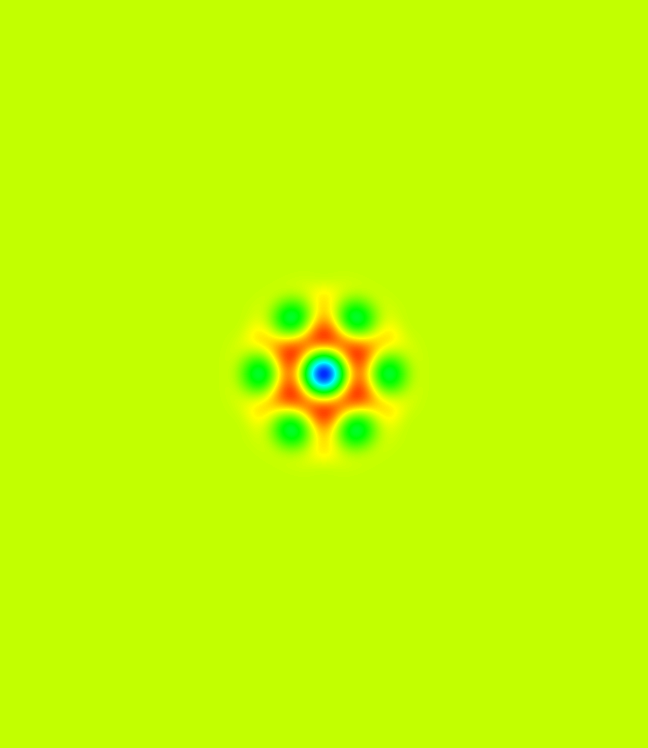}
	\includegraphics[width=0.3\textwidth]{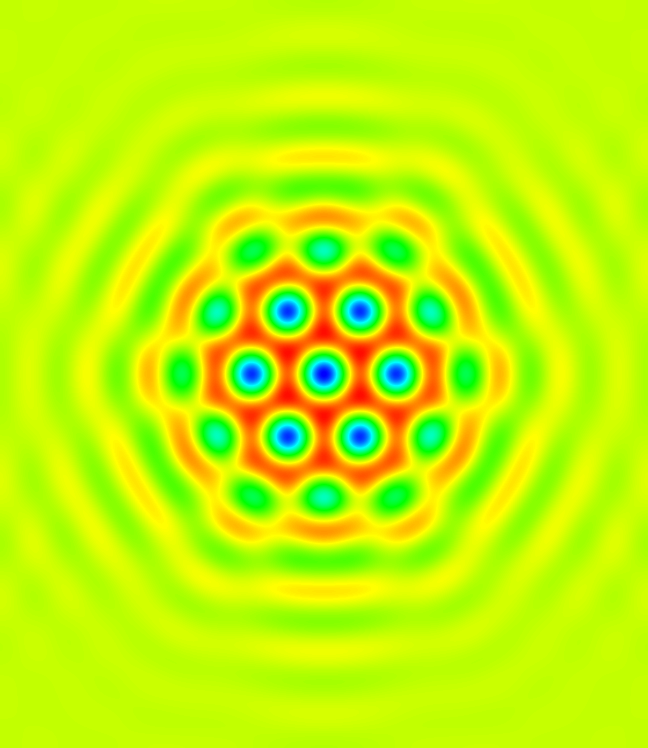}
	\includegraphics[width=0.3\textwidth]{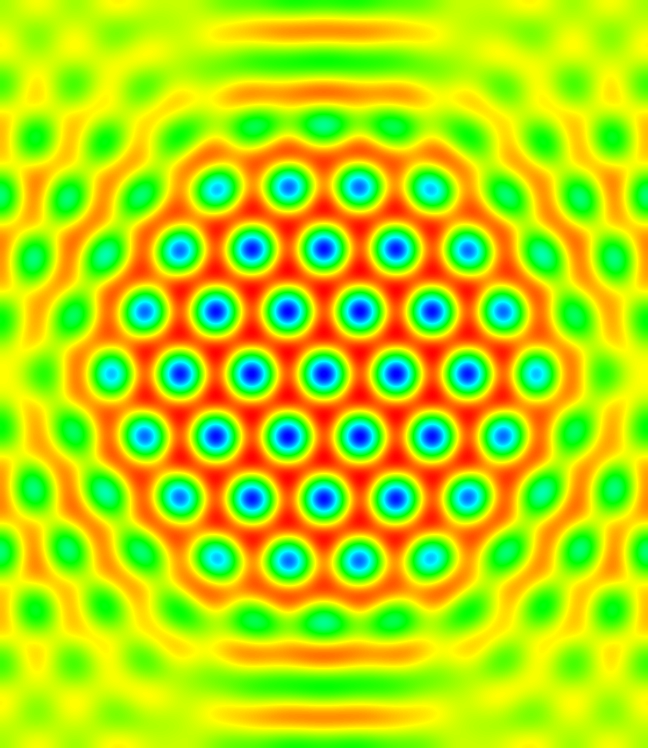}\\
	\includegraphics[width=0.3\textwidth]{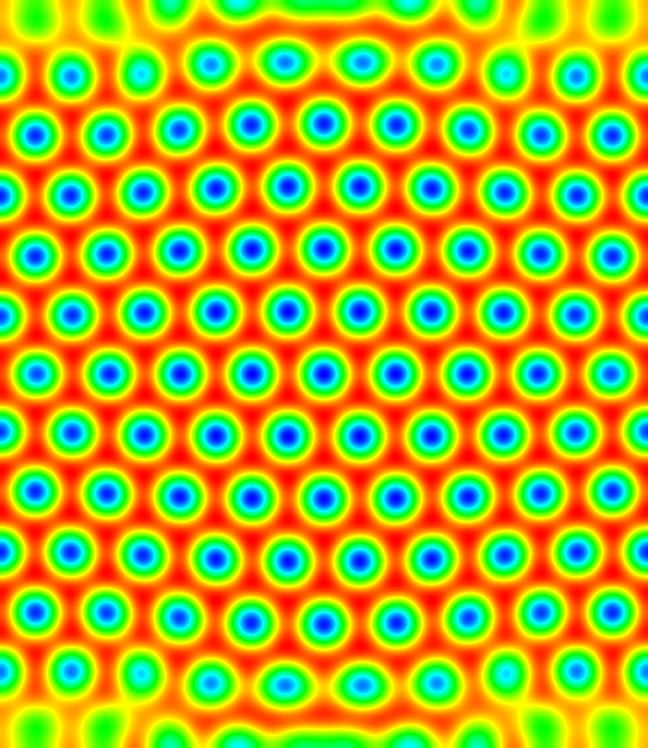}
	\includegraphics[width=0.3\textwidth]{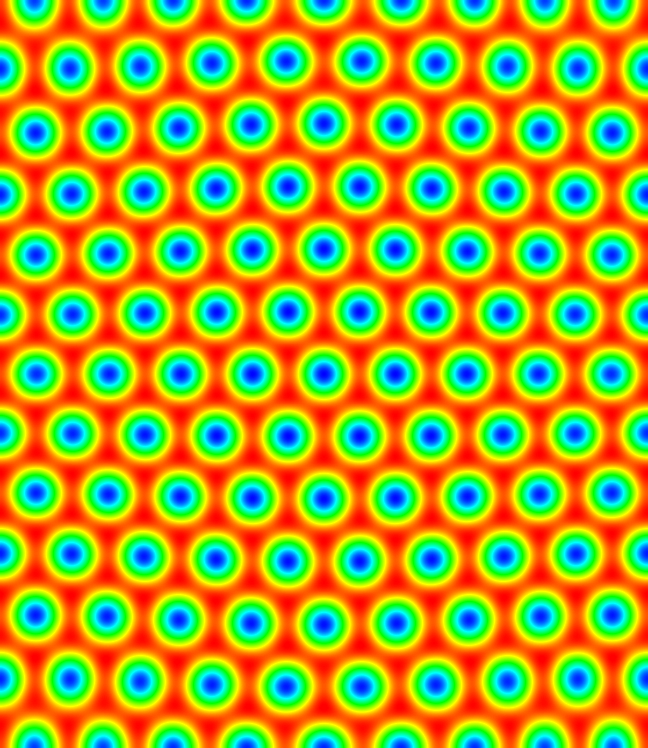}
	\caption{Crystal growth on a rectangular domain $\left(0, \frac{36\pi}{\sqrt{3}}\right) \times \left(0, 24\pi\right)$ using a mesh consisting of  $460 \times 532$ elements, and a time step size $\Delta t = 0.01$. The times shown are $t=0, 20, 40, 60, 80$ (from left to right, top to bottom). The colors are scaled from $\sphi = -0.67$ to $\sphi = 0.72$.  See \cref{ss:crystal_formation} for more details.}
	\label{fig:Chrystal_growth}
\end{figure}

\subsection{Benchmark}\label{ss:benchmark}

We implement a problem previously introduced in \cite{HWWL:09:PFC} and also analyzed in \cite{DS:23:C0PFC}. The computational domain is $\Omega = \left(0, 32\right)^2$, with homogeneous Neumann boundary conditions. The initial condition is given by
\begin{align*}
	\sphi&(x, y) = 0.07 - 0.02 \cos\left(\frac{2\pi(x - 12)}{32}\right)
	\sin\left(\frac{2\pi(y - 1)}{32}\right)\\
	&+ 0.02 \cos^2\left(\frac{\pi(x + 10)}{32}\right)
	\cos^2\left(\frac{\pi(y + 3)}{32}\right)
	- 0.01 \sin^2\left(\frac{4\pi x}{32}\right)
	\sin^2\left(\frac{4\pi(y - 6)}{32}\right).
\end{align*}

We set $\varepsilon = 0.025$, and use a mesh containing $256 \times 256$ elements, with time step size $\Delta t = 0.005$. \Cref{fig:benchmark} shows the solution at different times, while \cref{fig:benchmark_total_energy} shows the decay of the total scaled energy, so $E/32^2$ until the final time $t = 10$. The results are in agreement with those found in \cite{DS:23:C0PFC, HWWL:09:PFC}.

\begin{figure}
	\centering
	\includegraphics[width=0.7\textwidth]{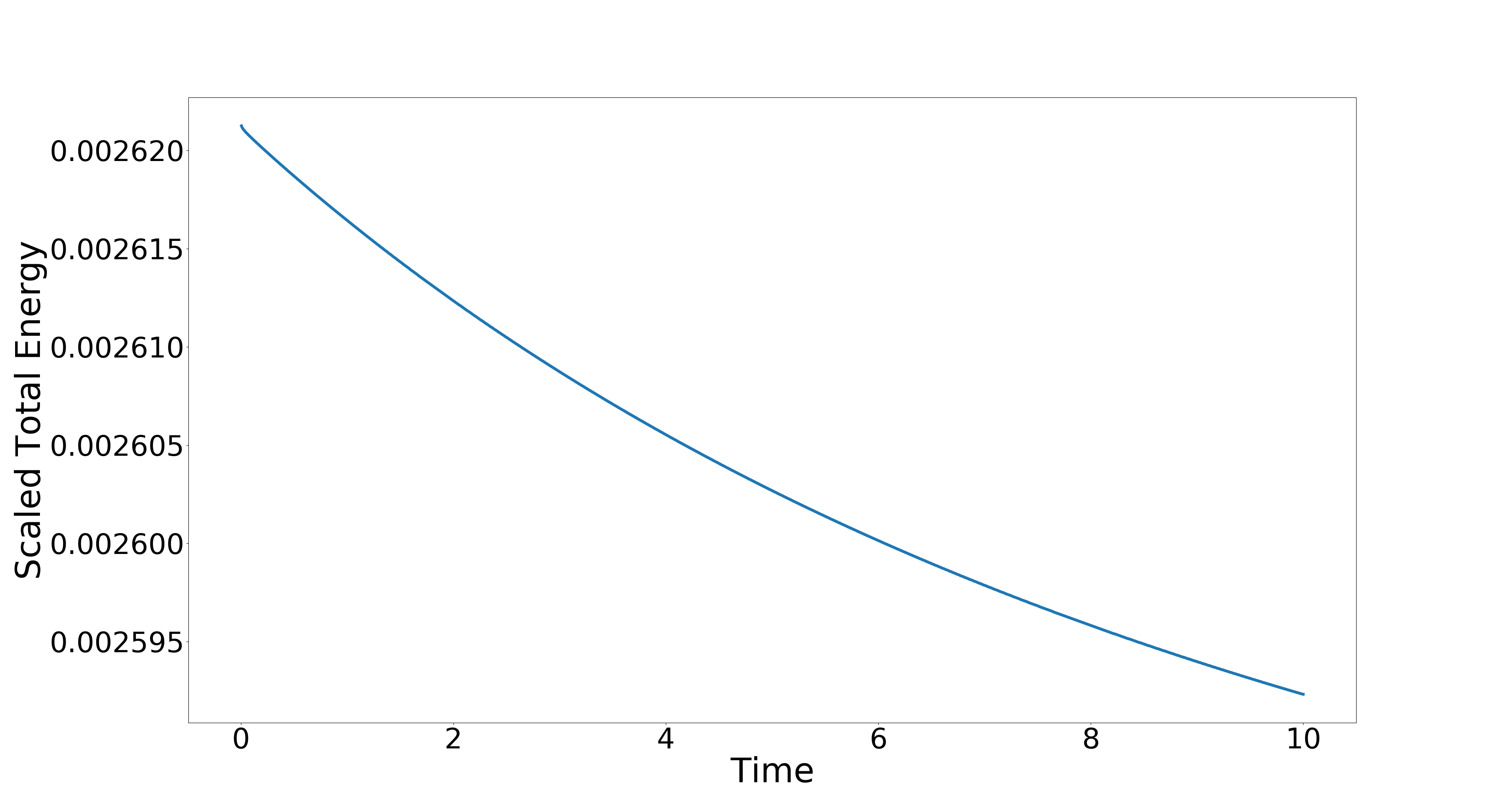}
	\caption{Benchmark test case, the decrease of the total scaled energy.}
	\label{fig:benchmark_total_energy}
\end{figure}

\begin{figure}
	\centering
	\includegraphics[width=0.3\textwidth]{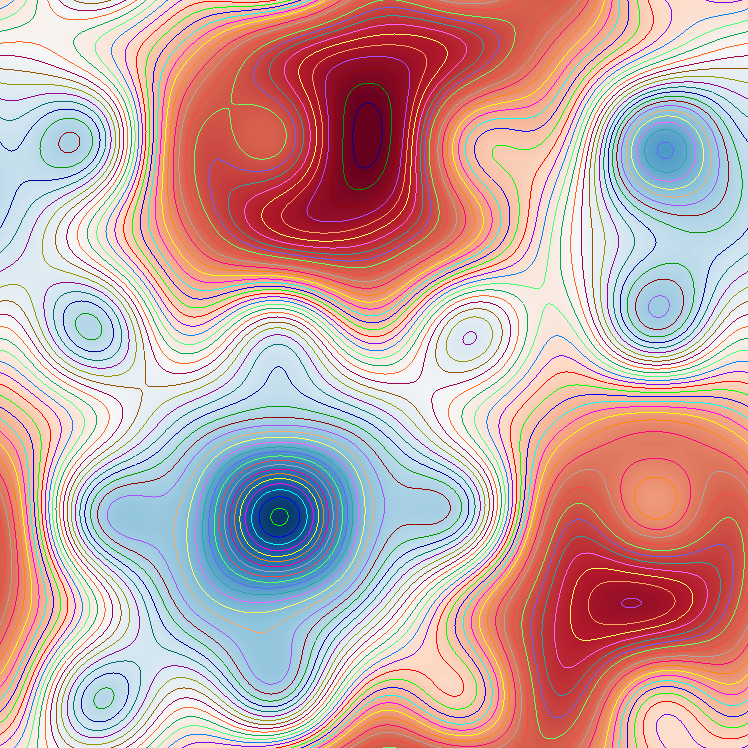}
	\includegraphics[width=0.3\textwidth]{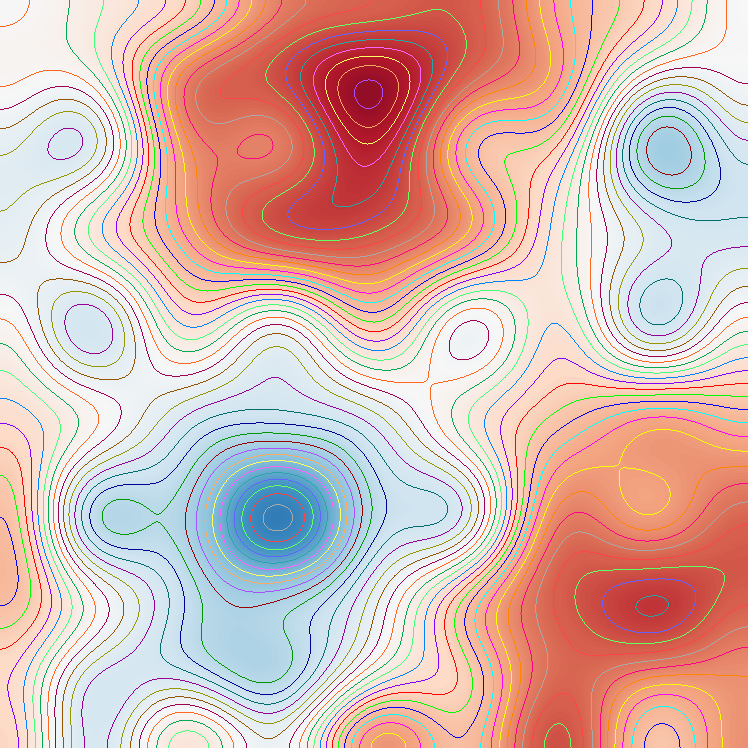}
	\includegraphics[width=0.3\textwidth]{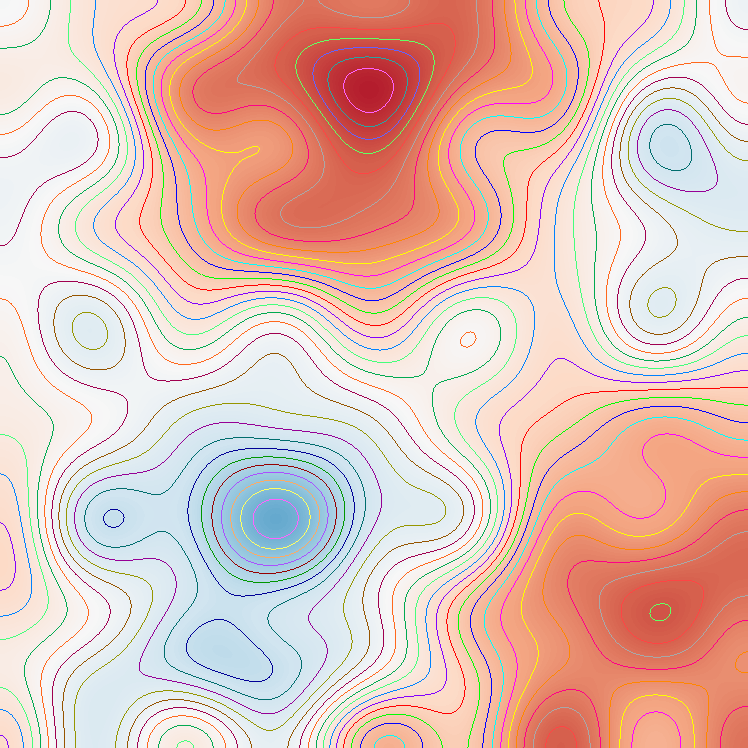}
	\caption{Benchmark test case, output at different times. The times shown are $t=0, 5, 10$ (from left to right).  The domain is $(0,32)^2$, the mesh consists of $256 \times 256$ elements, and $\Delta t = 0.005$. The colors are scaled from a density value of $\sphi = 0.04$ to $\sphi = 0.096$. See \cref{ss:benchmark} for more details.}
	\label{fig:benchmark}
\end{figure}

\subsection{Polycrystal Growth and Grain Boundary Dynamics}\label{ss:grain_grows}
In continuation of capturing the liquid-to-solid phase transitions presented in~\cref{ss:crystal_formation}, we again consider a supercooled liquid of uniform density $\bar{\sphi}$ occupying the computational domain $\Omega$. Instead of capturing the growth of a single seed crystal, we capture the growth of three seed crystals, placed at random locations $(x_l,y_l)$ within $\Omega$ and each with a different orientation. The different orientations of the seed crystals lead to the formation of defects such as vacancies and grain boundaries. The goal of this example is to demonstrate the method's ability to capture polycrystalline growth and the ensuing defects, which largely govern the macroscopic properties (e.g., tensile strength and material hardness) of the underlying crystalline materials. 
In this example, we initiate the nucleation of three crystals, each with a different orientation and
each placed at random locations within $\Omega$. As in~\cref{ss:crystal_formation}, there is a paucity of information regarding the underlying crystalline material considered in this benchmark example. The computational domain is  $\Omega = \left(0, 201\right)^2$ with Neumann boundary conditions, and the initial condition for the density is similar to the one given in \cref{ss:crystal_formation}  with the random locations of the three seed crystals described by:
\begin{equation}\label{def:polycrystal}
	\sphi(x_l,y_l) = \bar{\sphi} + C\cos\left(\frac{qy_l}{\sqrt{3}}\right)\cos\left(qx_l\right) - 0.5 \cos\left(\frac{2qy_l}{\sqrt{3}}\right)
\end{equation}

\noindent where $\bar{\sphi} = 0.285, C = 0.446, q = 0.66$. We use the method parameter $\varepsilon = 0.25$. The local coordinates $(x_l,y_l)$ are obtained from the global coordinates $(x,y)$ via a rotation by angle $\alpha$. We place three grains to the squares $[25, 40]^2$, $[60, 75]^2$, and $[85, 100]\times [25, 40]$,  with $\alpha = \pi/4, 0, -\pi/4$, respectively.
As nucleation of these three crystals occurs, we observe the evolution of the crystalline phase as well as the appearance of the defects in the form of dislocations and grain boundaries as described in~\cref{fig:grain_growth}.
\Cref{fig:grain_growth} shows the solution at different times using a mesh consisting of $402 \times 402$ elements and $\Delta t = 1$. We can see the crystals grow and develop well-defined crystal-liquid interfaces. This evolution is in excellent agreement with the results reported in
\cite{GN:2012:PFC, yang-han2017, DS:23:C0PFC, guo2016ldg}.

\begin{figure}
	\centering
	\includegraphics[width=0.3\textwidth]{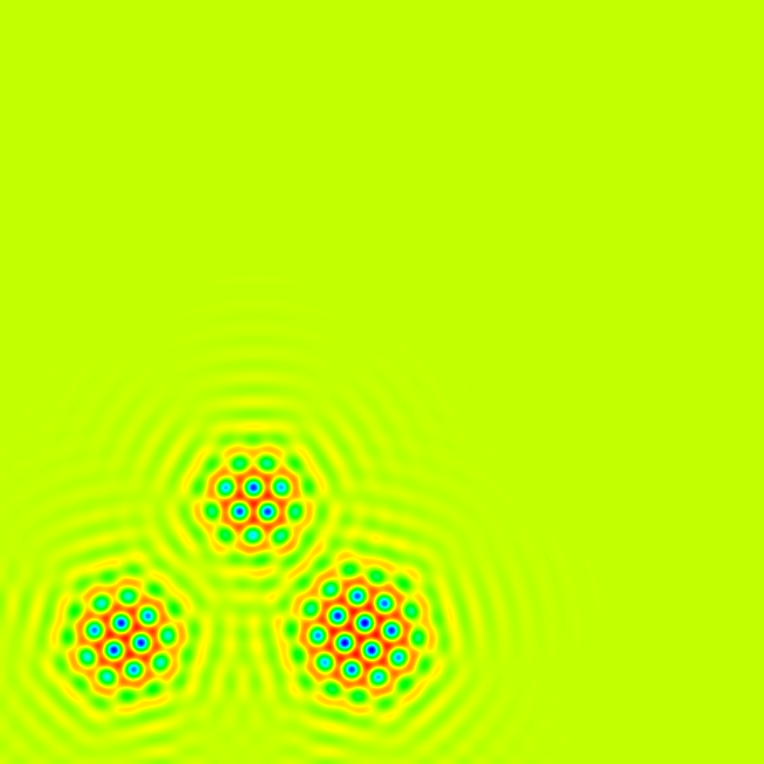}
	\includegraphics[width=0.3\textwidth]{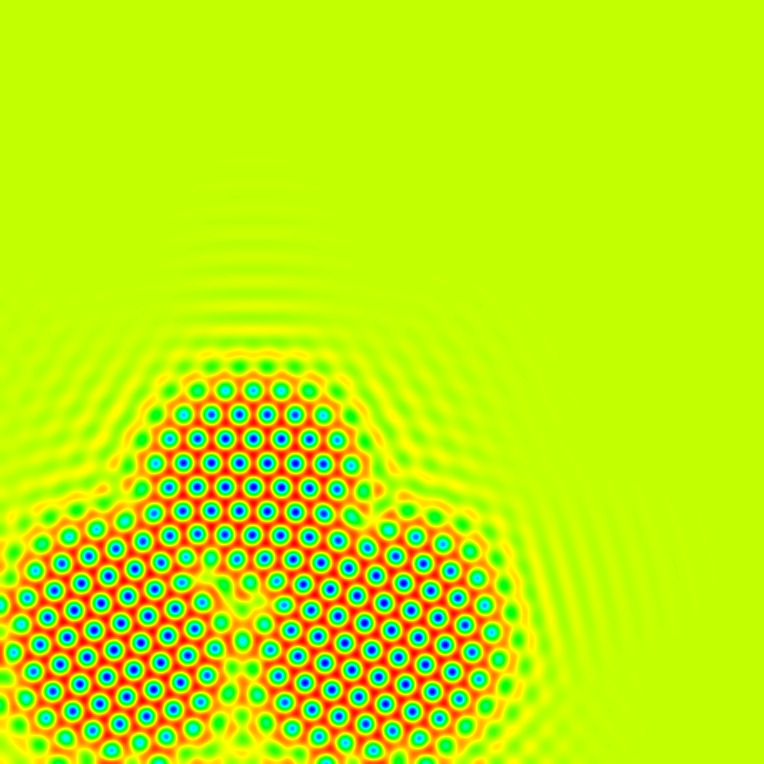}
	\includegraphics[width=0.3\textwidth]{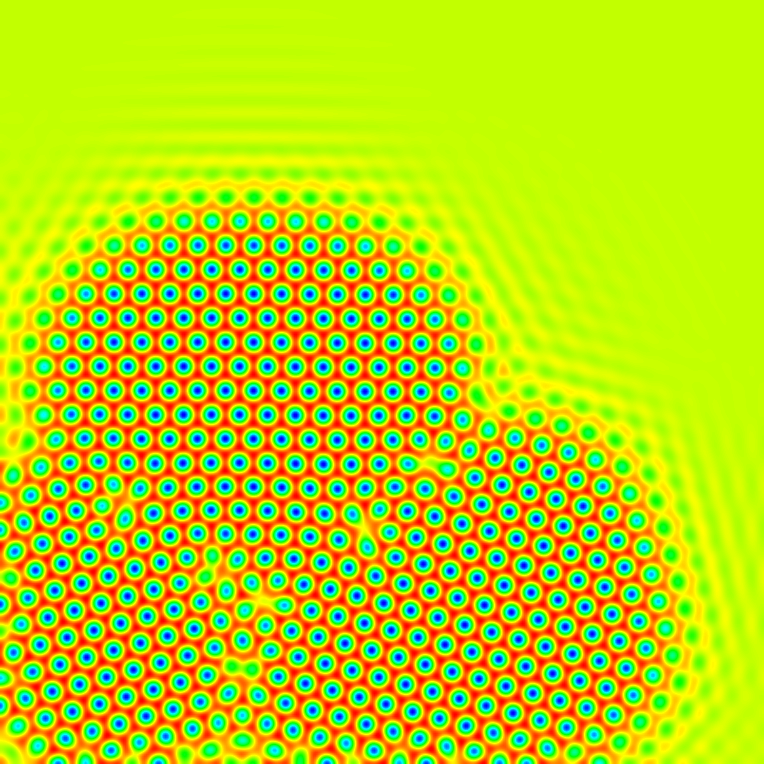}\\
	\includegraphics[width=0.3\textwidth]{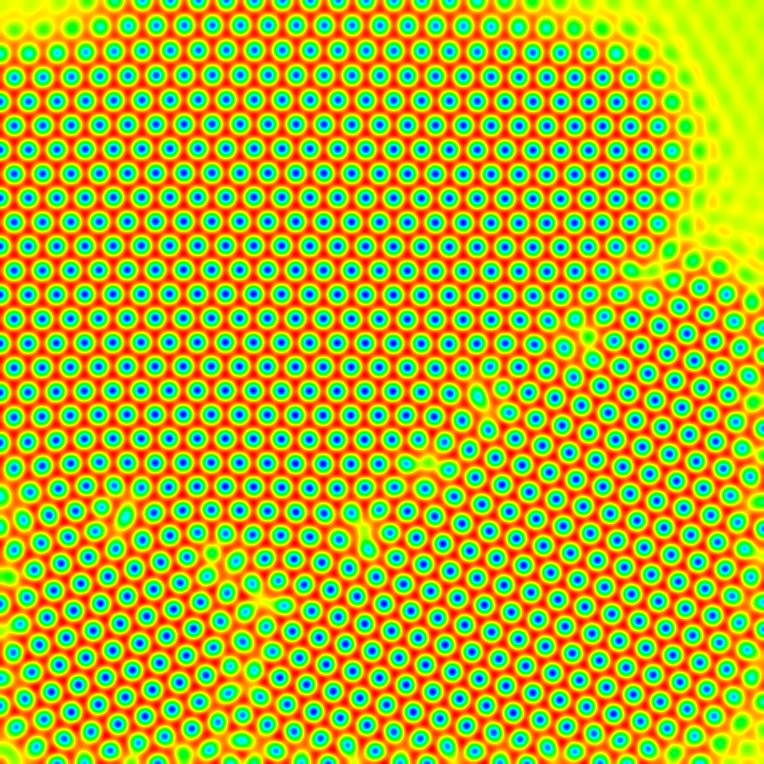}
	\includegraphics[width=0.3\textwidth]{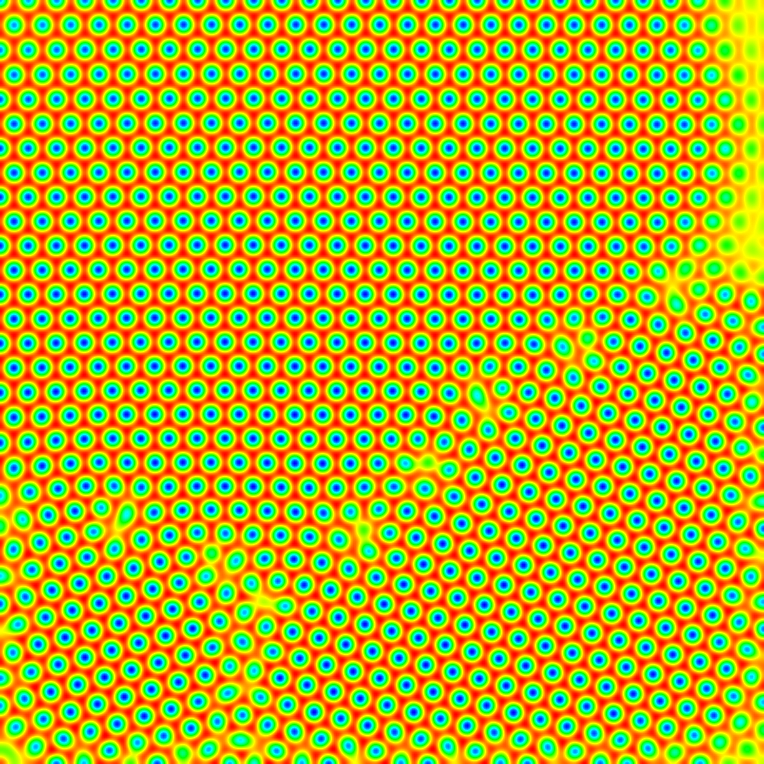}
	\includegraphics[width=0.3\textwidth]{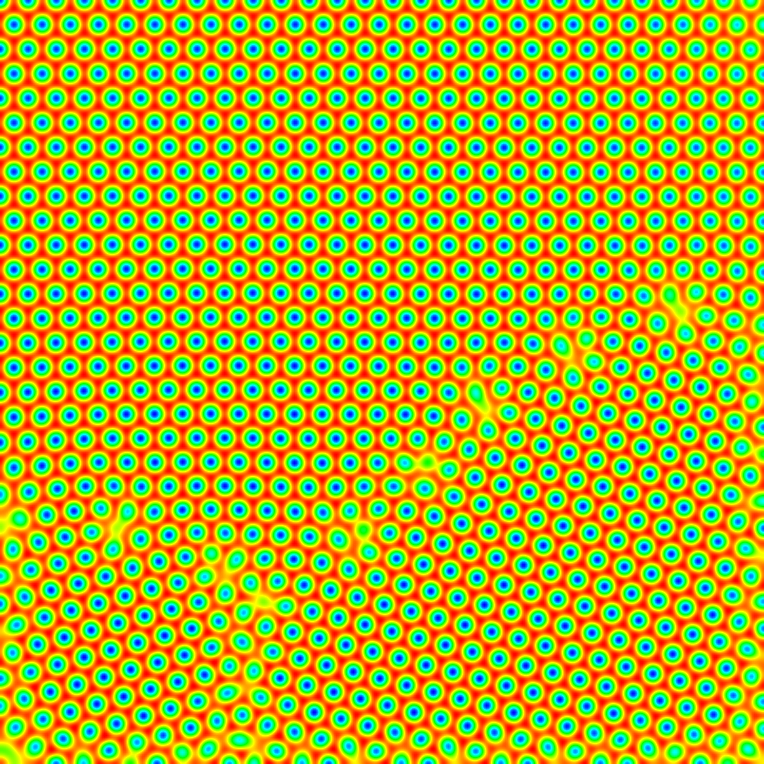}
	\caption{Atomic density for grain growth at times $t=250, 500, 1000, 2000, 3000, 4000$ (from left to right, top to bottom).  The domain is $(0,201)^2$, the mesh consists of $402 \times 402$ elements, and $\Delta t = 1$. The colors are scaled from $\sphi = -0.47$ to $\sphi = 0.63$. See \cref{ss:grain_grows} for more details.}
	\label{fig:grain_growth}
\end{figure}

\subsection{Phase transition}\label{ss:phase_transformation}

In the next experiment, the computational domain is $(0, 128)^2$ and we consider periodic boundary conditions. The initial density is $0.7 + \eta(x,y)$, where $\eta(x,y)$ is a uniformly distributed random number satisfying $-0.7 \le \eta(x,y) \le 0.7$. In this case, $\spsi_h^0$ is set to 0. The model parameter is $\varepsilon = 0.025$. The time step is $\Delta t = 0.01$, and the mesh consists of $256 \times 256$ elements, and we used biquadratic basis functions, similar to \cite{guo2016ldg}. The results at different times are shown in \cref{fig:phase_transition}. We can see similar phase transitions as in \cite{guo2016ldg,yang-han2017}.

\begin{figure}
	\centering
	\includegraphics[width=0.3\textwidth]{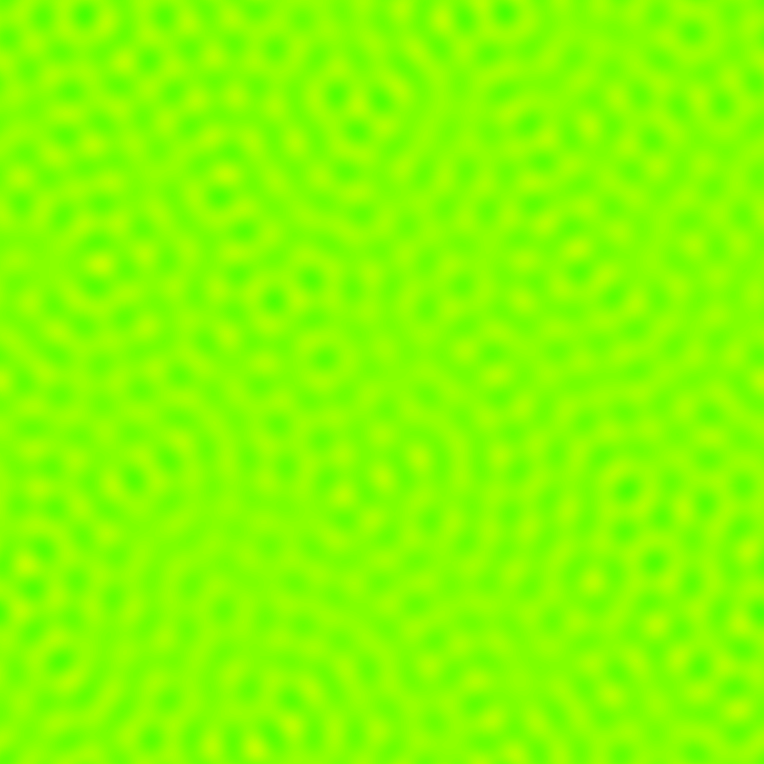}
	\includegraphics[width=0.3\textwidth]{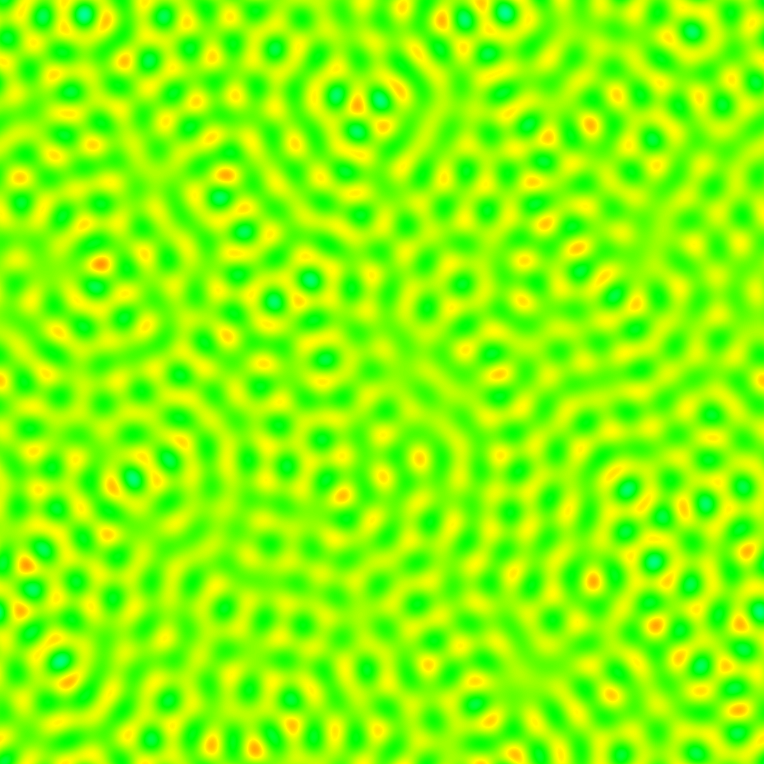}
	\includegraphics[width=0.3\textwidth]{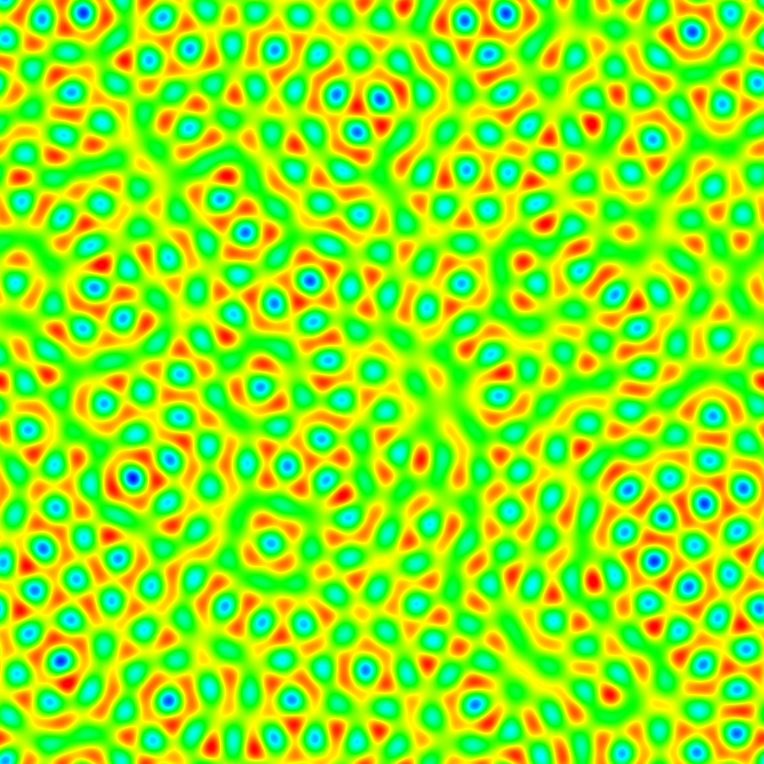}\\
	\includegraphics[width=0.3\textwidth]{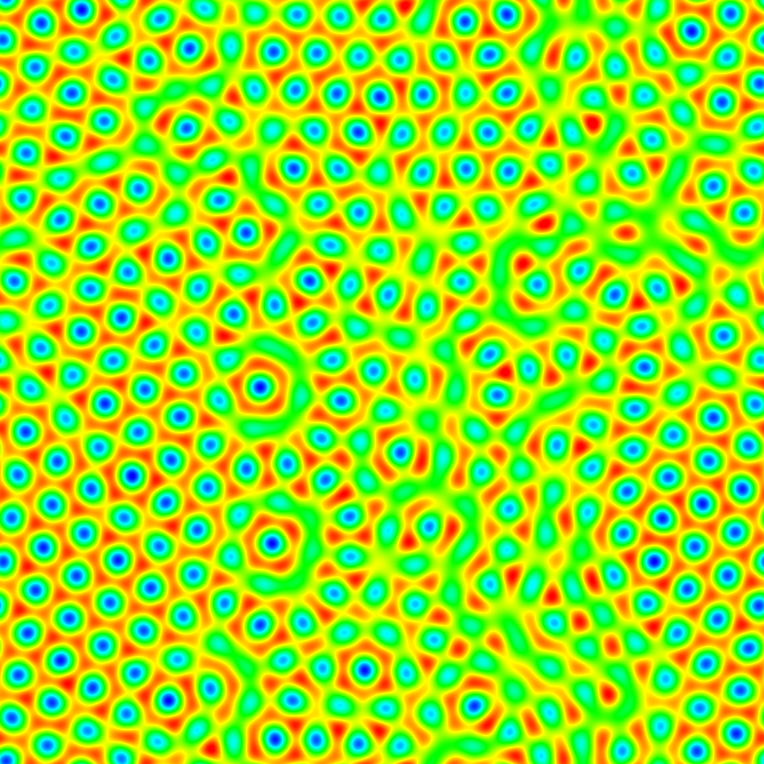}
	\includegraphics[width=0.3\textwidth]{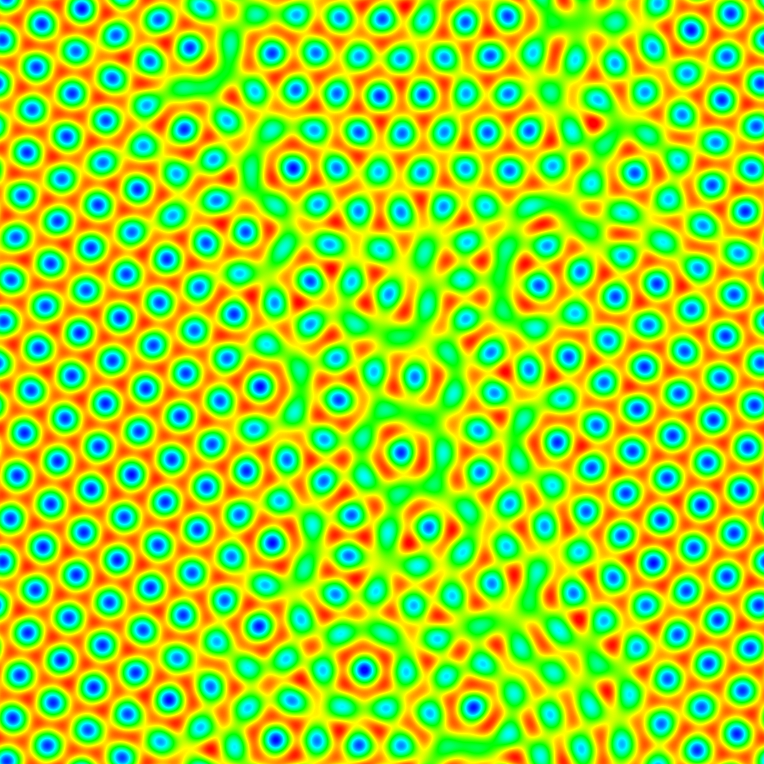}
	\includegraphics[width=0.3\textwidth]{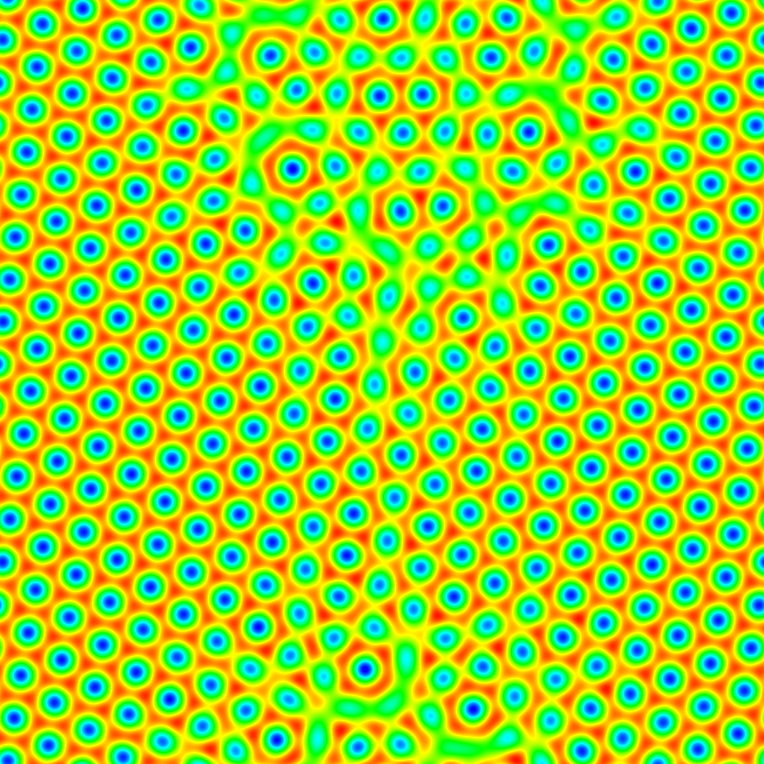}
	\caption{Crystal growth in a super-cooled liquid. The computational domain is $\left(0,128\right)^2$ using a mesh consisting of  $256 \times 256$ elements, with biquadratic basis functions and a time step size $\Delta t = 0.1$. The times shown are $t=300, 500, 700, 900, 1100, 1300$ (from left to right, top to bottom). The colors are scaled from $\sphi = -0.2$ to $\sphi = 0.23$. See \cref{ss:phase_transformation} for more details.}
	\label{fig:phase_transition}
\end{figure}

\subsection{Comparison of degrees of freedom}
In this section, we compare the degrees of freedom from this work against the $C^0$ IPDG discretization from \cite{DS:23:C0PFC}, and the LDG discretization from \cite{guo2016ldg}, using the different meshes in \cref{ss:accuracy,ss:crystal_formation,ss:benchmark,ss:grain_grows}. The $C^0$ IPDG discretization from \cite{DS:23:C0PFC} uses the following discrete spaces
\begin{align*}
    V_h&:=\{v \in C(\bar{\Omega}): \, v|_K\in P_1(K), \forall K \in \mathcal{T}_h\},\\
    Z_h&:=\{v \in C(\bar{\Omega}): \, v|_K\in P_2(K), \forall K \in \mathcal{T}_h\},
\end{align*}
meaning that $V_h$ and $Z_h$ contain continuous first and second-degree polynomials, respectively.
On the other hand, the LDG discretization from \cite{guo2016ldg} uses the following discrete spaces
\begin{align*}
    V_h&:=\{\varphi: \, \varphi|_K\in P^k(K), \forall K \in \mathcal{T}_h\},\\
    \Sigma^d_h&:=\{\Phi = (\phi_1,\ldots, \phi_d)^T: \, \phi_l|_K\in P^k(K),\ l=1,\dots, d, \forall K \in \mathcal{T}_h\},
\end{align*}
meaning that scalar and vector-valued discontinuous polynomials of degree $k$ are used.
While both \cite{DS:23:C0PFC} and \cite{guo2016ldg} use triangular meshes, and the current work uses quadrilateral meshes, we compare the degrees of freedom for the case when the meshes from the current work are replaced by triangular ones, using the same spacing in both the $x$ and the $y$ direction. The results are shown in \cref{tab:dof_compare}, where we observe significantly lower (approximately 12\% of the LDG case) number of globally coupled degrees of freedom for the EDG spaces in \cref{eq:M_space}. 
\begin{table}[!h]
    \centering
    \begin{tabular}{l|c|c|c|c}
        Mesh & $C^0$ IPDG & LDG $k=1$ & HDG & EDG\\
   Sec. 5.1 1st level  & 11520 & 55296 & 41472 & 6912 \\
   Sec. 5.1 2nd level  & 46080 & 221184 & 165888 & 27648 \\
   Sec. 5.1 3rd level  & 184320 & 884736 & 663552 & 110592 \\
   Sec. 5.1 4th level  & 737280 & 3538944 & 2654208 & 442368 \\
   Sec. 5.1 5th level  & 2949120 & 14155776 & 10616832 & 1769472 \\
   Sec. 5.2  & 1223600 & 5873280 & 4404960 & 734160 \\
   Sec. 5.3  & 329218 & 1572864 & 1182720 & 198147   \\
   Sec. 5.4  & 808020 & 3878496 & 2908872 & 484812 \\
    \end{tabular}
    \caption{Globally coupled degrees of freedom comparison between \cite{DS:23:C0PFC}, \cite{guo2016ldg} and the current manuscript for the lowest polynomial order.}
    \label{tab:dof_compare}
\end{table}

\subsection{Energy stability}\label{ss:energy_stability}

In the final numerical experiment, we show the effect the stabilization parameters have on energy stability. In \Cref{lem:full_energy_stability} it was shown that $\tau_1,\tau_3 > 0$, $\tau_4 = 2\tau_1$, and $\tau_2 = \tau_1$ are necessary to obtain energy stability. In this section, we will show that the energy might increase if these relations are violated. 

We consider a slightly modified version of the example from \cref{ss:phase_transformation}. The computational domain is $(0. 2\pi)^2$ with periodic boundary conditions. The mesh consists of $24 \times 24$ quadrilaterals, and the model parameter is $\varepsilon = 0.5,$. The time stepping is $\Delta t = 1.0$, and we used bilinear basis functions. First we performed the calculations with the choice $\tau_1 = \tau_2 = \tau_3 = \frac12\tau_4 = 10$, which satisfies the conditions of \Cref{lem:full_energy_stability}. The resulting energy curve can be seen in the top left of \cref{fig:energy_stability}. The other three simulations used stability parameters violating the conditions of \cref{lem:full_energy_stability}:
\begin{enumerate}
	\item $\tau_1 = \tau_2 = 1$, $\tau_3 = -10$, $\tau_4 = -30$, the resulting energy curve is shown in the top right of \cref{fig:energy_stability},
	\item $\tau_1 = \tau_2 = \tau_3 = 1$, $\tau_4 = 3000$, the resulting energy curve is shown in the bottom left of \cref{fig:energy_stability},
	\item $\tau_1 = \tau_2 = \tau_3 = 10$, $\tau_4 = 3000$, the resulting energy curve is shown in the bottom right of \cref{fig:energy_stability}.
\end{enumerate}

We can see that these incorrect choices all result in an increase of the energy.

\begin{figure}
	\centering
	\includegraphics[width=0.48\textwidth]{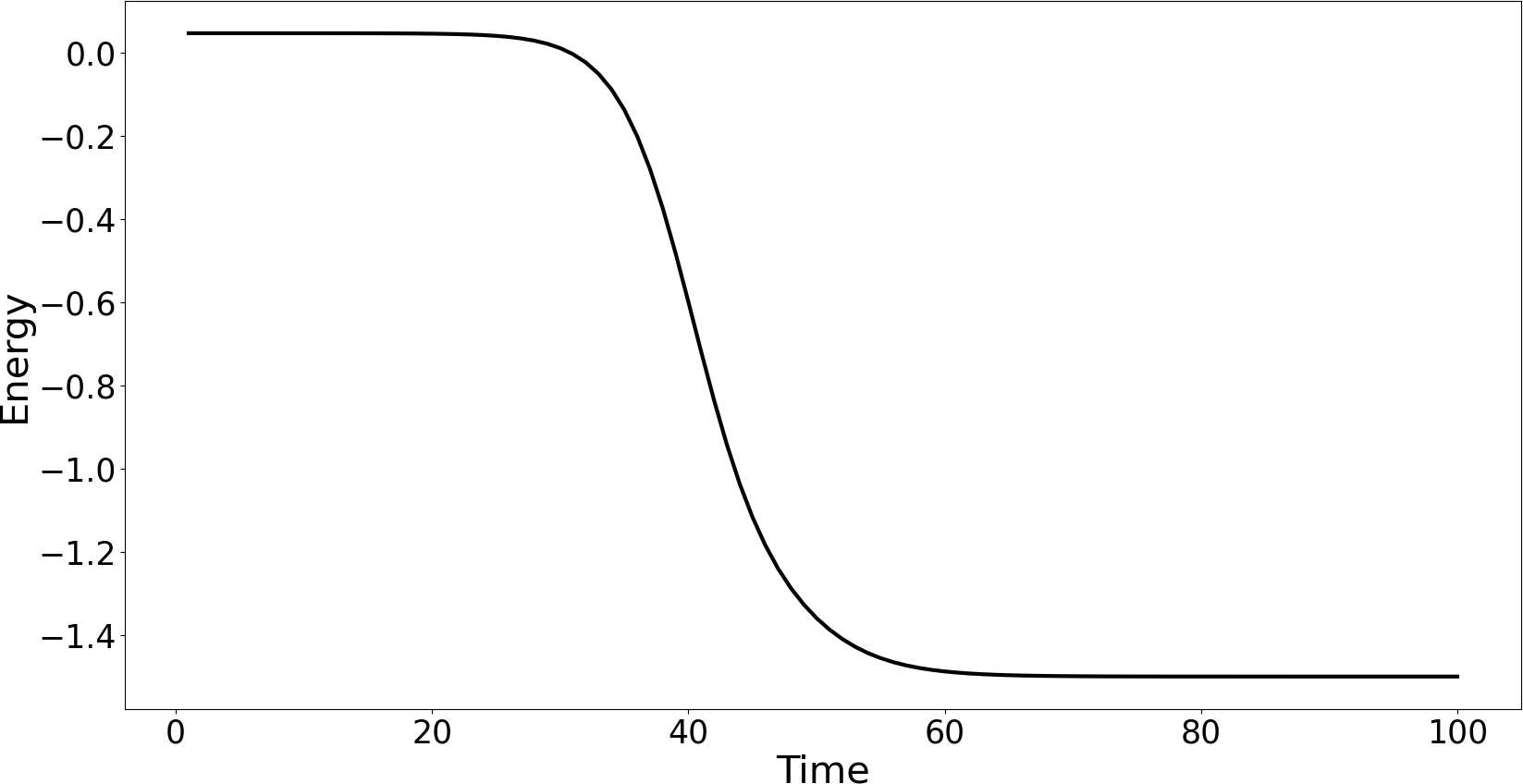}
	\includegraphics[width=0.48\textwidth]{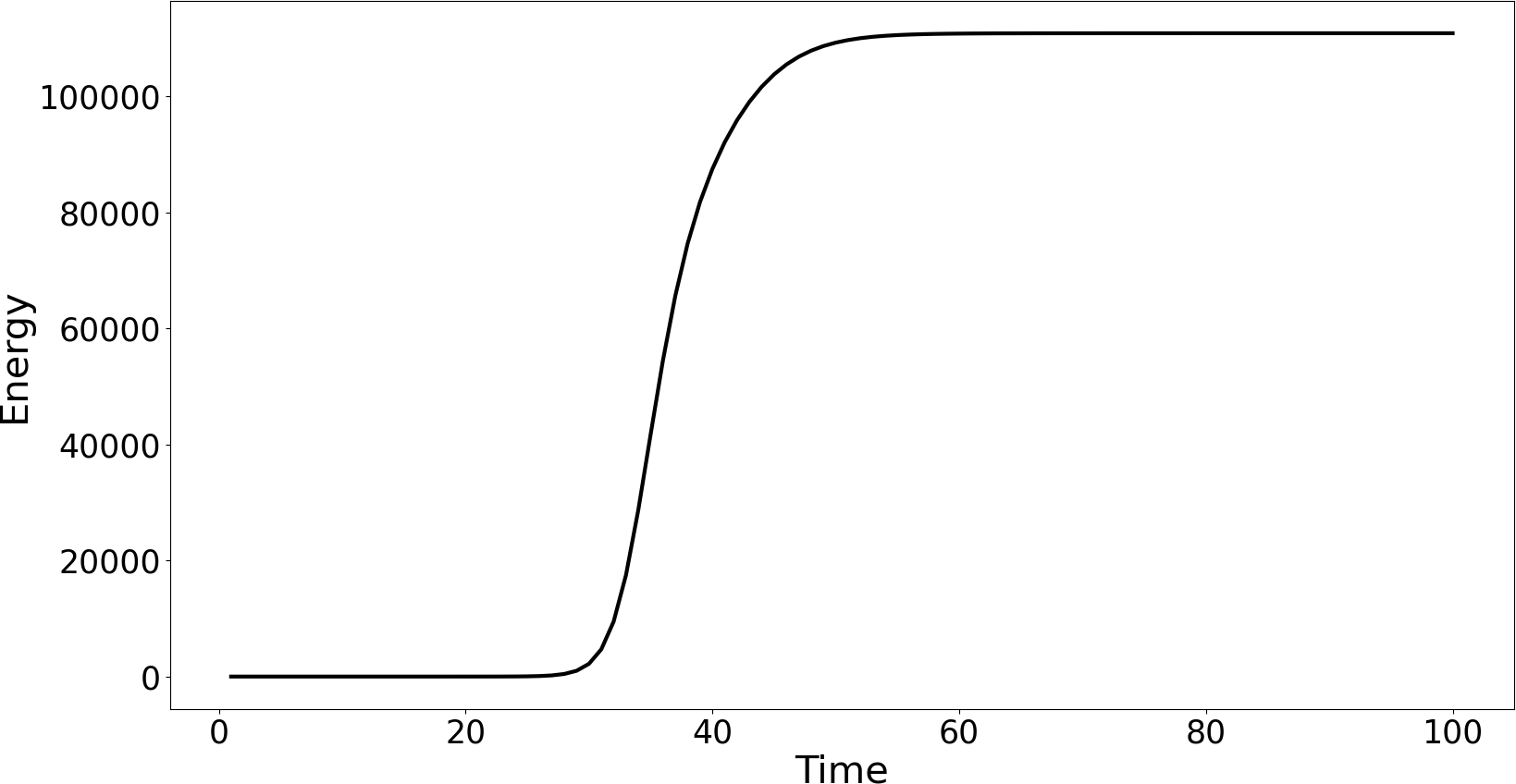}\\
	\includegraphics[width=0.48\textwidth]{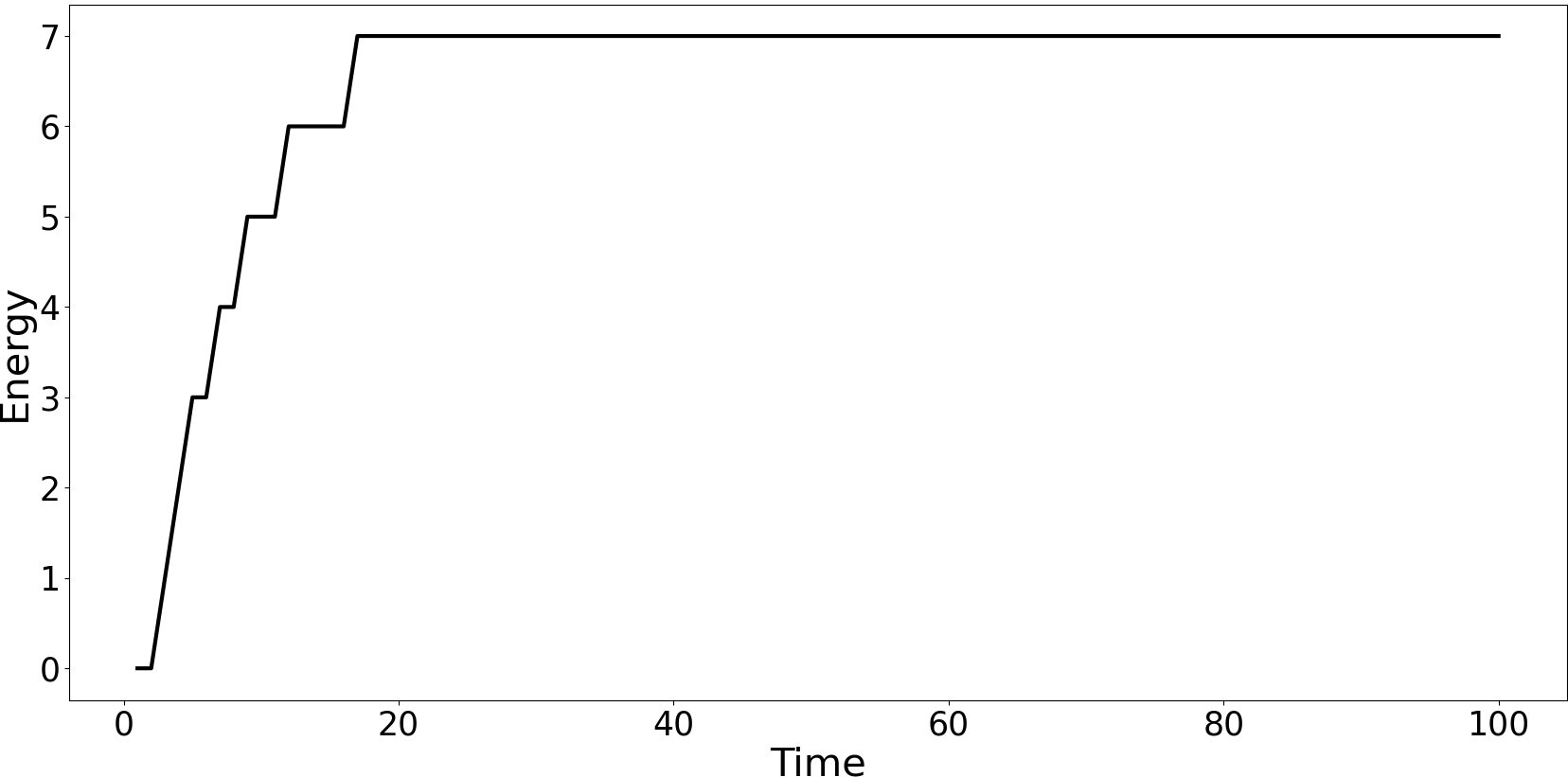}
	\includegraphics[width=0.48\textwidth]{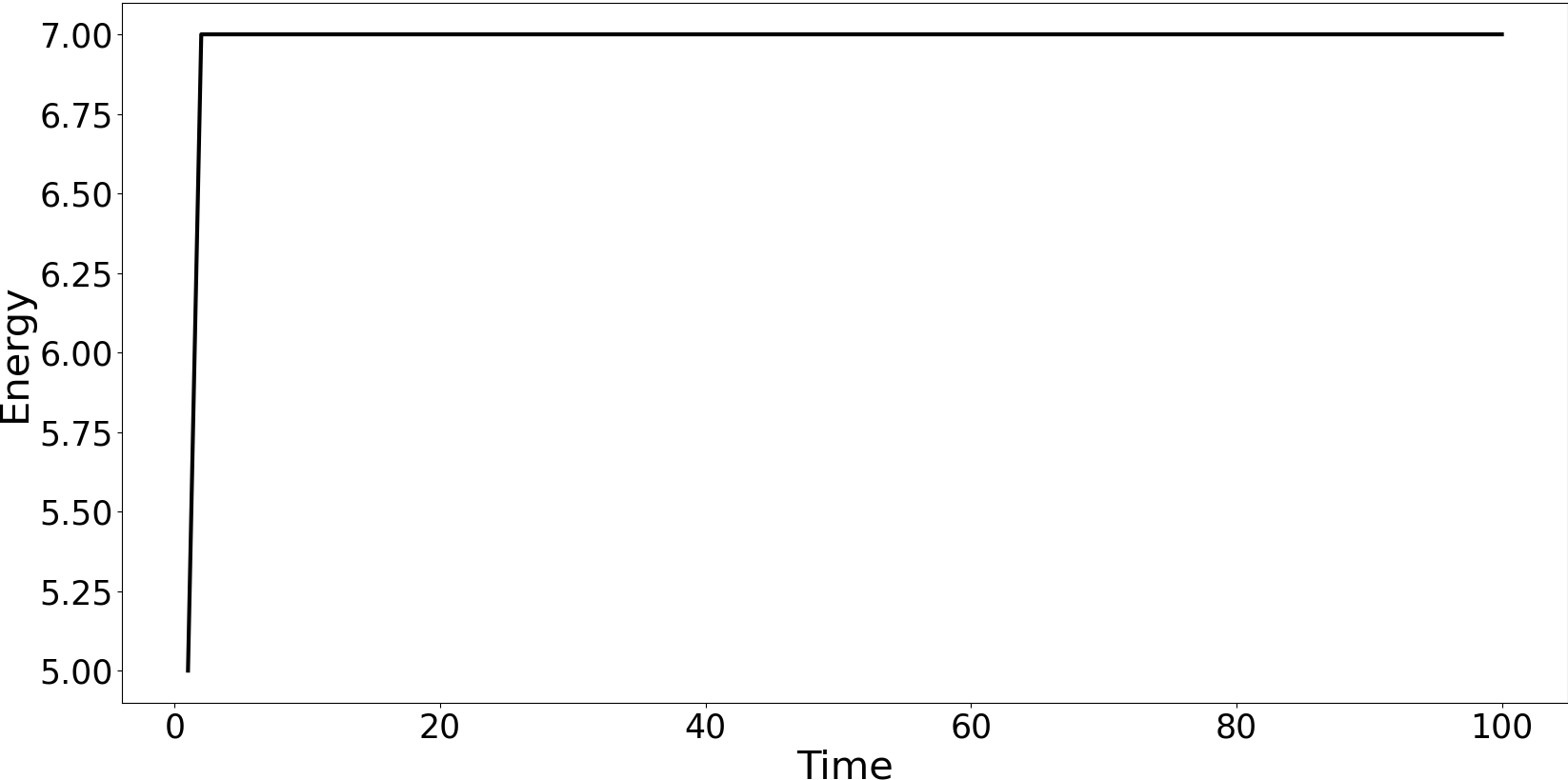}
	\caption{The effect of the stabilization terms on the energy stability of the proposed method. See \cref{ss:energy_stability} for more details.}
	\label{fig:energy_stability}
\end{figure}

\section{Conclusions and future work}
\label{sec:conclusions}
This paper presents a first-order convex splitting hybridizable/embedded discontinuous Galerkin method for the phase field crystal equation. Under a certain relation between the stabilization parameters, we have established the energy stability of the scheme, and prove the existence and uniqueness of a solution for the proposed fully discrete scheme, with variable mobility. The scheme's performance is presented through the results of some numerical examples. Future work includes convergence analysis and error estimates, as well as extending the method to a higher-order scheme in time that is unconditionally energy stable as well. Moreover, we would like to extend this approach to models that involve higher-order derivatives, such as the eighth-order phase field crystal model~\cite{jaatinen2009thermodynamics,jaatinen2010eighth}.

\bibliographystyle{elsarticle-num-names}
\bibliography{hdg_for_pfc}
\end{document}